\newtheorem{thm}{Theorem}[section]
\newtheorem{cor}[thm]{Corollary}
\newtheorem{lem}[thm]{Lemma}
\newtheorem{prop}[thm]{Proposition}
\theoremstyle{definition}
\newtheorem{defn}[thm]{Definition}
\theoremstyle{remark}
\newtheorem{que}[thm]{Question}
\newtheorem{rem}[thm]{Remark}
\newtheorem{exe}[thm]{Example}
\numberwithin{equation}{section}
\newcommand{\CB}{\textnormal{CB}}
\newcommand{\cb}{\textnormal{cb}}
\newcommand{\Hom}{\textnormal{Hom}}
\begin{document}

\title[Cantor-Bendixson rank of metabelian groups]{On the Cantor-Bendixson rank of metabelian groups}
\author{Yves Cornulier}
\address{IRMAR \\ Campus de Beaulieu \\
35042 Rennes Cedex, France}
\email{yves.decornulier@univ-rennes1.fr}

\date{December 19, 2009}

\keywords{Metabelian groups, space of marked groups, Cantor-Bendixson analysis, Bieri-Strebel invariant, lattice of subgroups}

\subjclass[2000]{Primary 20E15; Secondary 13E05, 20F05, 20F16, 57M07}


\begin{abstract}
We study the Cantor-Bendixson rank of metabelian and virtually metabelian groups in the space of marked groups, and in particular, we exhibit a sequence $(G_n)$ of 2-generated, finitely presented, virtually metabelian groups of Cantor-Bendixson rank~$\omega^n$.
\end{abstract}
\maketitle
\section{Introduction}

Let $G$ be a discrete group. Under pointwise convergence, the set $\mathcal{N}(G)$ of normal subgroups is a Hausdorff compact, totally discontinuous space. This topology, sometimes referred to as the {\it Chabauty topology}, was studied in many papers, including \cite{Chab,Gri,Cham,CG,CGP}. If $F_d$ denotes the non-abelian free group on $d$ generators, we can view $\mathcal{N}(F_d)$ as the set $\mathcal{G}_d$ of {\it marked groups on $d$ generators}, through the identification $N\mapsto F_d/N$.

As a topological space, the identification of $\mathcal{G}_d$ seems to be a difficult problem. We focus here on the Cantor-Bendixson rank, which is defined as follows. If $X$ is a topological space, we define its derived subspace $X^{(1)}$ as the subset of accumulation points in $X$. Iterating over ordinals
$$X^{(0)}=X,\;\;X^{(\alpha+1)}=X^{(\alpha)(1)},\quad X^{(\lambda)}=\bigcap_{\beta<\alpha}X^{(\beta)}\textnormal{ for limit }\lambda,$$
we have a non-increasing family $X^{(\alpha)}$ of closed subsets. If $x\in X$, we write $$\CB_X(x)=\sup\{\alpha|x\in X^{(\alpha)}\}$$ if this supremum exists, in which case it is a maximum. Otherwise we say that $x$ is in the {\it condensation part} (or {\it perfect kernel}) of $X$ and we write $\CB_X(x)=\mathfrak{C}$, where the symbol $\mathfrak{C}$ is not an ordinal. If $\CB_X(x)\neq\mathfrak{C}$ for all $x\in X$, i.e.~if $X^{(\alpha)}$ is empty for some ordinal, we say that $X$ is {\it scattered}. If $G$ is a group, we define its (intrinsic) {\it Cantor-Bendixson rank} $\cb(G)$ as $\CB_{\mathcal{N}(G)}(\{1\})$.

Groups $G$ with $\cb(G)=0$, which include finite groups and simple groups, are called {\it finitely discriminable} and were considered in \cite{CGP}. However, most groups, like infinite residually finite groups, are not finitely discriminable. Let us begin by a very simple example (contained in Proposition \ref{hirsch}).

\begin{prop}
Let $G$ be a finitely generated nilpotent group. Then $\cb(G)=h(G)$, the Hirsch length of $G$.
\end{prop}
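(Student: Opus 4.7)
My plan is to prove the sharper statement $\CB_{\mathcal{N}(G)}(N)=h(G/N)$ for every $N\trianglelefteq G$, by induction on $h(G)$; specialising to $N=\{1\}$ then yields the proposition. The base case $h(G)=0$ is immediate because $G$, and hence $\mathcal{N}(G)$, is finite.

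For the \emph{lower bound} $\cb(G)\geq h(G)=n\geq 1$, I exploit the standard fact that an infinite finitely generated nilpotent group has infinite centre: pick a central element $a$ of infinite order and set $A=\langle a\rangle$, so that every $kA$ ($k\geq 1$) is normal in $G$ and $G/kA$ is finitely generated nilpotent of Hirsch length $n-1$. By induction, $\cb(G/kA)=n-1$. The canonical map $\mathcal{N}(G/kA)\hookrightarrow\mathcal{N}(G)$ is a topological embedding onto the closed subspace of normal subgroups containing $kA$, and a closed embedding can only preserve or increase the Cantor--Bendixson stratification, so $kA\in\mathcal{N}(G)^{(n-1)}$. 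Since the distinct points $kA$ converge to $\{1\}$ in the Chabauty topology as $k\to\infty$, the trivial subgroup is an accumulation point of $\mathcal{N}(G)^{(n-1)}$, giving $\{1\}\in\mathcal{N}(G)^{(n)}$.

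For the \emph{upper bound} $\CB(N)\leq m:=h(G/N)$ I run a secondary induction on $m$. If $m=0$, then $N$ has finite index and hence is finitely generated (polycyclicity); with $S$ a finite generating set of $N$ and $1,g_2,\dots,g_k$ coset representatives of $G/N$, the finite set $F=S\cup\{g_2,\dots,g_k\}$ forces any normal subgroup $N'$ with $N'\cap F=N\cap F$ to contain $N$ and to miss every $g_i$ for $i\geq 2$, hence to equal $N$, so $N$ is isolated. If $m\geq 1$ and $N\in\mathcal{N}(G)^{(m+1)}$, choose $N_i\in\mathcal{N}(G)^{(m)}$ with $N_i\to N$ and $N_i\neq N$. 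Since $N$ is finitely generated, Chabauty convergence forces $N_i\supseteq N$ eventually. The normal subgroups $N'$ satisfying $N\subsetneq N'$ and $N'/N$ finite correspond to the finitely many normal subgroups of $G/N$ contained in the torsion subgroup $T(G/N)$, which is finite in every finitely generated nilpotent group; being a finite set of points distinct from $N$, they cannot accumulate at $N$. Thus eventually $N_i/N$ is infinite, $h(G/N_i)\leq m-1$, and the inductive hypothesis gives $\CB(N_i)\leq m-1$, contradicting $N_i\in\mathcal{N}(G)^{(m)}$.

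The main obstacle is the key step in the inductive half of the upper bound, namely that a nontrivially convergent sequence $N_i\to N$ in $\mathcal{N}(G)$ eventually satisfies $h(G/N_i)<h(G/N)$. This uses the two structural features of finitely generated nilpotent groups essentially: polycyclicity, to ensure that $N$ is finitely generated and hence $N_i\supseteq N$ eventually, and finiteness of the torsion subgroup of every quotient, to rule out the remaining potential non-trivial limits of equal Hirsch length. The other ingredients, including the existence of a central $\mathbb{Z}$ and the routine transfer of Cantor--Bendixson rank along closed embeddings $\mathcal{N}(G/A)\hookrightarrow\mathcal{N}(G)$, are standard.
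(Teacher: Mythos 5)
Your proof is correct, and it takes a genuinely different and more elementary route than the paper's. The paper obtains the statement as a special case of Proposition~\ref{hirsch}: it first invokes Proposition~\ref{cblp} to identify $\cb(G)$ with the reduced length $\ell'(G)$ (a fact valid for any group with max-n all of whose quotients are residually finite), then uses the combinatorial description of $\ell'$ from Lemma~\ref{hirlp} to interpret $\ell'(G)$, when finite, as the maximal number of infinite subfactors in a \emph{normal} series, and finally compares this with the Hirsch length (infinite subfactors in a \emph{subnormal} series), the two coinciding precisely when $G$ is supersolvable. Your argument bypasses the reduced-length formalism entirely and runs a self-contained double induction on Hirsch length: the lower bound comes from the hands-on observation that the central infinite cyclic subgroups $\langle a^k\rangle$ lie in $\mathcal{N}(G)^{(n-1)}$ (via the closed embedding $\mathcal{N}(G/\langle a^k\rangle)\hookrightarrow\mathcal{N}(G)$ and the outer inductive hypothesis) and converge to $\{1\}$, while the upper bound uses only that $N$ is finitely generated --- so that $N_i\to N$ with $N_i\neq N$ forces $N_i\supsetneq N$ eventually --- together with the finiteness of $T(G/N)$ to rule out accumulation by subgroups with $N_i/N$ finite. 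What the paper's route buys is uniformity and generality: the same computation gives $\cb(G)\le h(G)$ for every virtually polycyclic $G$, with equality if and only if $G$ is supersolvable, and it comes essentially for free once the machinery of Sections~\ref{length}--\ref{finiteaction} is in place (which the paper needs anyway). What your route buys is elementarity: no ordinal lengths, no natural sums, no residual-finiteness lemmas. Note, though, that both structural facts you invoke --- infinite centre, and the torsion elements of every quotient forming a finite subgroup --- genuinely use nilpotence, so your argument would not extend unchanged to the supersolvable case that the paper's proposition also covers. (One small presentational point: your lower-bound half establishes the ``sharper statement'' only at $N=\{1\}$, but since the proposition needs nothing more this is harmless.)
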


For instance, we have $\cb(\mathbf{Z}^k)=k$, which was already mentioned, without proof, in \cite[Section 6]{CGP}. The reader can check it as an warm-up exercise; precisely the statement to prove by induction is that if $A$ is a finitely generated abelian group virtually isomorphic to $\mathbf{Z}^k$, then it has Cantor-Bendixson rank $\cb(A)=k$.

So far all known examples either satisfied $\cb(G)<\omega$ or $\cb(G)=\mathfrak{C}$. Our main result is to leap from $\omega$ to $\omega^\omega$.

\begin{thm}
Fix any $d\ge 2$. Then $\mathcal{G}_d$ contains points of Cantor-Bendixson rank equal to any ordinal $\alpha<\omega^\omega$.
More precisely, for every $\alpha<\omega^\omega$, there exists a finitely presented, 2-generated metabelian-by-(finite cyclic) group $H$ with $\cb(H)=\alpha$.\label{thmain}
\end{thm}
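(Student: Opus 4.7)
The plan is to exploit the metabelian structure to translate the Chabauty-topological question on $\mathcal{N}(H)$ into one about the submodule lattice of the derived subgroup, and then to realize the prescribed ordinal at the module level. For a finitely generated metabelian $H$ with derived subgroup $A$ and abelian quotient $Q = H/A$, the group $A$ is a finitely generated $\mathbf{Z}[Q]$-module (Philip Hall). I would first check that the set of normal subgroups of $H$ contained in $A$ is a closed subspace $\mathcal{N}_A(H) \subseteq \mathcal{N}(H)$ canonically homeomorphic to the space $\mathcal{S}_{\mathbf{Z}[Q]}(A)$ of $\mathbf{Z}[Q]$-submodules under its natural Chabauty-type topology. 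A small enough neighbourhood of $\{1\}$ in $\mathcal{N}(H)$ consists only of subgroups contained in $A$, because $Q$ is finitely generated and any nontrivial projection to $Q$ is detected by a fixed finite set of test words. This should give $\cb(H) = \CB_{\mathcal{S}(A)}(0)$, possibly up to an easily controlled finite correction.

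I would then compute $\CB_{\mathcal{S}(M)}(0)$ for suitable modules. For $R = \mathbf{F}_p[\mathbf{Z}^n]$ viewed as a module over itself, the goal is to prove by induction on $n$ that $\CB_{\mathcal{S}(R)}(0) = \omega^n$. The inductive step decomposes accumulation points near $0$ into strata indexed by the Krull dimension of the submodule: a submodule of Krull dimension $k$ should have CB rank $\omega^k$ by induction, and taking a supremum over $k < n$ yields $\omega^n$. To realize an arbitrary $\alpha = \omega^{n_1} k_1 + \dots + \omega^{n_r} k_r < \omega^\omega$ in Cantor normal form, I would take $M_\alpha$ as a finite direct sum of modules of Krull dimensions $n_i$ with multiplicities $k_i$, and establish an ordinal-sum additivity formula for CB rank under direct sums of modules supported at disjoint primes.

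The group is then built as $G_\alpha = M_\alpha \rtimes \mathbf{Z}^{n_1}$, a finitely generated metabelian group; finite presentation follows from Bieri-Strebel $\Sigma$-invariant theory applied to an appropriately chosen $M_\alpha$. To reduce to two generators, I would take a cyclic extension: a finite cyclic group $C$ acting on $\mathbf{Z}^{n_1}$ (for instance by cyclic permutation of a basis) so that $\mathbf{Z}^{n_1} \rtimes C$ is $2$-generated, with the action extended compatibly to $M_\alpha$. The resulting $H_\alpha = G_\alpha \rtimes C$ is $2$-generated, metabelian-by-(finite cyclic), finitely presented, and has the same Cantor-Bendixson rank $\alpha$, since $C$ is finite and contributes only finitely many extra normal subgroups in any small neighbourhood of $\{1\}$.

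The principal obstacle is the exact CB-rank computation for the module lattice: proving that $\CB_{\mathcal{S}(R)}(0) = \omega^n$ \emph{exactly}, and more generally that the chosen $M_\alpha$ realizes $\alpha$ and not something larger. The lower bound is obtained by explicit accumulation chains, but the upper bound requires a careful Noetherian induction aligned with ordinal arithmetic. A secondary subtlety is verifying that the normal subgroups of $H_\alpha$ not contained in $M_\alpha$ (in particular those projecting nontrivially onto the cyclic quotient $C$) do not contribute unwanted accumulation near $\{1\}$.
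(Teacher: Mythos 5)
Your broad strategy (derived subgroup as a $\mathbf{Z}[Q]$-module, CB rank of the submodule lattice, direct sums for the Cantor normal form, Bieri--Strebel for finite presentation, cyclic extension for two generators) tracks the paper's, but there are several genuine gaps. The claim that a small enough neighbourhood of $\{1\}$ in $\mathcal{N}(H)$ consists only of subgroups contained in $A=[H,H]$ is false: already for $H=\mathbf{Z}^k$, where $A=\{1\}$, arbitrarily small normal subgroups project nontrivially onto $Q$, and these are exactly what produce $\cb(\mathbf{Z}^k)=k$. The paper instead computes $\cb(G)$ through the ordinal ``reduced length'' $\ell'$ (Proposition~\ref{cblp}) together with the decomposition $\cb(G)=\ell'(M/W(M))+h_G(\textnormal{Hir}(G))$ (Theorem~\ref{cbme}); for the examples it actually constructs the correction vanishes because $M$ contains its own centralizer and $W(M)=\{1\}$ (Corollary~\ref{owncentralizer}), but this has to be arranged and checked, not obtained from a bare neighbourhood argument. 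Your index is also off by one: for $R=\mathbf{F}_p[\mathbf{Z}^n]$ as a module over itself, $\ell(R)=\omega^n$ by Proposition~\ref{elc}, so the CB rank of $\{0\}$ is the \emph{reduced} length $\omega^{n-1}$, not $\omega^n$.

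The more serious gap is that finite presentability and two-generation are both treated as routine, whereas they are where the real work lies. For a generic $\mathbf{Z}[Q]$-module $M_\alpha$ of prescribed Krull-dimensional type, the Bieri--Strebel condition $\Gamma^\pm(M_\alpha)=\{0\}$ fails; worse, the invariant of a direct sum is the union of the invariants, so $\Gamma^\pm(M_1\oplus M_2)$ can be nontrivial even when each $\Gamma^\pm(M_i)=\{0\}$ --- a direct sum of modules supported at disjoint primes is precisely the kind of thing that breaks. The paper's module $A_d=\mathbf{Z}[(x_i),s^{-1}]$ with $s=\prod_i(x_i-x_i^2)$ is a localization chosen so that it factors as a \emph{tensor} product, and only tensor products (Corollary~\ref{cortensor}), not direct sums, preserve $\Gamma^\pm=\{0\}$. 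Similarly, $M_\alpha\rtimes(\mathbf{Z}^{n_1}\rtimes C)$ is not two-generated merely because $\mathbf{Z}^{n_1}\rtimes C$ is: you need the module part to be normally generated by a single commutator of the two chosen generators. The paper gets this by passing to a specific subgroup $\Lambda_d=\langle ue_1,\gamma\rangle$ with $u$ an element of the module, checking via the matrix realization that its derived subgroup is a nonzero ideal (so still has $\ell'=\omega^d$ by Corollary~\ref{ideal}), and then invoking the results of Section~\ref{finiteaction} to show that a finite group acting on the module lattice does not change the reduced length. None of these steps is a formality, and each is needed for the statement to come out exactly.
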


The second statement implies the first as for every {\it finitely presented} $d$-generated group $H$, the space $\mathcal{G}_d$ contains $\mathcal{N}(H)$ as a clopen subset. Note that it is known that, on the other hand, as a particular case of \cite[Theorem~3]{Ols}, every non-elementary hyperbolic group $G$ satisfies $\cb(G)=\mathfrak{C}$ (another proof is given by \cite{Cham} when $G$ is torsion-free) and in particular $\cb(F_d)=\mathfrak{C}$.

A pleasant class of groups, for which the study of the Cantor-Bendixson rank can be carried out, is the class of groups satisfying max-n, i.e.~in which there is no infinite increasing sequence of normal subgroups.

\begin{prop}\label{propi}
Let $G$ be a group satisfying max-n. Then the space $\mathcal{N}(G)$ is scattered. If moreover every quotient of $G$ is residually finite, then
we have $\cb(G)=\sup\{\cb(H)+1\}$, where $H$ ranges over quotients groups of $G$ with infinite kernel.
\end{prop}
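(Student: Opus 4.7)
The plan is to split the proposition into its two assertions. For the \emph{scatteredness} of $\mathcal{N}(G)$, I would show that every non-empty closed subset $Y \subseteq \mathcal{N}(G)$ contains an isolated point. The max-n hypothesis is equivalent to the statement that every non-empty subset of $\mathcal{N}(G)$ has a maximal element, so $Y$ has a maximal element $N$; the same hypothesis ensures $N$ is normally generated by a finite set $S$. The basic open neighbourhood $\{M \in \mathcal{N}(G) : S \subseteq M\}$ then coincides with $\{M : M \supseteq N\}$, whose intersection with $Y$ reduces to $\{N\}$ by maximality of $N$ in $Y$.

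For the equality $\cb(G) = \sup\{\cb(H)+1\}$, the basic tool is the clopen embedding $\iota : \mathcal{N}(G/N) \hookrightarrow \mathcal{N}(G)$ obtained by pulling normal subgroups back to $G$, available whenever $N$ is finitely normally generated (automatic under max-n). It yields the identity $\CB_{\mathcal{N}(G)}(N) = \cb(G/N)$ for every $N \trianglelefteq G$. Combined with the recursive characterisation of CB rank (a point has rank $\geq \alpha+1$ iff it is accumulated by points of rank $\geq \alpha$), the inequality $\cb(G) \leq \sup\{\cb(G/N)+1 : N \textrm{ infinite}\}$ follows at once, without residual finiteness: max-n forces $G$ to have only finitely many finite normal subgroups (they all lie in a single maximal finite normal subgroup, obtained from max-n applied to the poset of finite normals), so any sequence of non-trivial $N_n \to \{1\}$ in $\mathcal{N}(G)$ with $\cb(G/N_n) \geq \alpha$ has all but finitely many terms infinite.

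The reverse inequality is the substantive half, and the main obstacle is to show that passing to a finite extension does not decrease Cantor--Bendixson rank. I would establish this together with the main inequality by a simultaneous transfinite induction on $\alpha$ of two statements: $(P_\alpha)$ every infinite $N \trianglelefteq G$ with $\cb(G/N) \geq \alpha$ satisfies $\cb(G) \geq \alpha+1$; and $(Q_\alpha)$ for any group $H$ satisfying max-n with residually finite quotients, and any finite normal $K \trianglelefteq H$, $\cb(H/K) \geq \alpha$ implies $\cb(H) \geq \alpha$. The base $P_0$ is precisely residual finiteness of $G$: an infinite RF group is accumulated in $\mathcal{N}(G)$ by its proper finite-index normal subgroups. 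The implication $P_\alpha \Rightarrow Q_{\alpha+1}$ is direct: if $\cb(H/K) \geq \alpha+1$, then $K \in \mathcal{N}(H)$ is accumulated by normal subgroups $\widetilde{P}_n \supsetneq K$ with $\cb(H/\widetilde{P}_n) \geq \alpha$, and these are infinite after discarding the finitely many finite values, so $P_\alpha$ applied to any such $\widetilde{P}_n$ gives $\cb(H) \geq \alpha+1$.

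The heart of the argument is the implication $Q_{\alpha+1} \Rightarrow P_{\alpha+1}$: given an infinite $N$ with $\cb(G/N) \geq \alpha+1$, I use residual finiteness of $G$ to choose finite-index normal subgroups $L_i$ of $G$ with $L_i \to \{1\}$, and set $M_i = N \cap L_i$; then $M_i$ has finite index in $N$ (hence is infinite), $M_i \to \{1\}$, and $G/M_i$ is an extension of $G/N$ by the finite normal subgroup $N/M_i \cong NL_i/L_i$. Invoking $Q_{\alpha+1}$ on this extension gives $\cb(G/M_i) \geq \alpha+1$, so $\{1\}$ is an accumulation point in $\mathcal{N}(G)$ of normal subgroups of CB rank $\geq \alpha+1$, forcing $\cb(G) \geq \alpha+2$. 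Limit stages of $Q$ follow by intersecting derived sets; limit stages of $P$ are treated by the same $M_i$-construction using the already established $Q_\lambda$.
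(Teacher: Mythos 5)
Your proof is correct, but it takes a genuinely different route from the paper's. The paper derives Proposition~\ref{propi} as a special case of the machinery in Section~\ref{length}: it introduces the ordinal-valued reduced length $\ell'_H$, proves $\cb_H(G)=\ell'_H(G)$ under the max-n and residual finiteness hypotheses (Proposition~\ref{cblp}), and then reads off the recursion for $\cb$ from the algebraic recursion $\ell'_H(G)=\sup\{\ell'_H(G/N)+1\}$ over $N$ with $\ell_{GH}(N)\geq\omega$ (which under residual finiteness means $N$ infinite, by Lemma~\ref{finite}); the scatteredness claim is handled by a separate induction on $\ell_H$ in Lemma~\ref{maxscat}. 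You instead argue entirely within the topology of $\mathcal{N}(G)$: scatteredness via the standard observation that in a noetherian poset any nonempty closed set has a maximal element, which is then isolated because max-n makes it normally finitely generated; and the rank recursion via the clopen embedding $\mathcal{N}(G/N)\hookrightarrow\mathcal{N}(G)$ (giving $\CB_{\mathcal{N}(G)}(N)=\cb(G/N)$) together with a double transfinite induction on $(P_\alpha)$ and the auxiliary finite-extension invariance statement $(Q_\alpha)$. Your $Q_\alpha$ plays the role the paper delegates to Lemma~\ref{quotfini} ($\ell'$ is unchanged by modding out a finite normal subgroup), and your $M_i=N\cap L_i$ trick mirrors the $L\cap N$ step inside the proof of Proposition~\ref{cblp}. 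What the paper's route buys is a reusable quantitative invariant ($\ell'$) that later powers the explicit computations in Theorem~\ref{cbme} and the module-theoretic sections; what your route buys is a self-contained, purely topological proof of this one proposition that never needs the Bass--Gulliksen length formalism. One small caveat: your $(P_\alpha)$ should be stated as a universal statement over \emph{all} groups satisfying max-n with residually finite quotients, not just the fixed $G$, since the step $P_\alpha\Rightarrow Q_{\alpha+1}$ applies it to the auxiliary group $H$.
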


An important class of groups with max-n is the class of finitely generated, virtually abelian-by-polycyclic groups \cite{Hall}, and these groups are residually finite (as well as their quotients) by a result of Roseblade \cite{Ros}. In particular, this includes finitely generated, virtually metabelian groups, for which however residual finiteness is much easier to obtain \cite{Hall2}.

The gist of Theorem \ref{thmain} is the study of the Cantor-Bendixson rank of finitely generated metabelian groups. However in this case (metabelian instead of virtually metabelian) we have a bound on the exponent in terms of the number of generators. Recall the (standard) wreath product $H\wr G$ refers to the semidirect product $H^{(G)}\rtimes G$.

\begin{thm}\label{thmbounds}
Let $G$ be a finitely generated metabelian group.

\begin{enumerate}
\item\label{in1} Suppose that $G$ sits inside an exact sequence $$1\to M\to G\to Q\to 1,$$ where $M$ is abelian and $Q$ is abelian of $\mathbf{Q}$-rank $\le d$. Then $$\cb(G)<\omega^{d+1}.$$ Moreover, this bound is sharp, as the wreath product $\mathbf{Z}^k\wr\mathbf{Z}^d$ satisfies $$\cb(\mathbf{Z}^k\wr\mathbf{Z}^d)=\omega^{d}\cdot k$$ for $d\ge 0$, $k\ge 1$.
\item\label{melib} If $G$ is $d$-generated and $d\ge 2$, then $$\cb(G)\le\omega^{d}\cdot (d-1),$$ with equality if and only if $G$ is isomorphic to the free metabelian group on $d$ generators.
\item\label{itemsplit} If $G$ is $d$-generated and the above exact sequence is split, then $\cb(G)<\omega^{d}$, and this bound is sharp if $d\ge 2$.
\end{enumerate}
\end{thm}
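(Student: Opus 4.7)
The strategy relies on Proposition~\ref{propi}: since every finitely generated metabelian group satisfies max-n (Hall) with all quotients residually finite (Hall--Roseblade), $\mathcal{N}(G)$ is scattered and $\cb(G)=\sup_N\{\cb(G/N)+1\}$ over infinite normal subgroups $N\triangleleft G$. All upper bounds will come by transfinite induction on structural invariants; sharpness by explicit construction of accumulating families of quotients.

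For the upper bound in part~\ref{in1}, I would induct on $d=\dim_\mathbf{Q}(Q\otimes\mathbf{Q})$. The base $d=0$ gives $G$ virtually finitely generated abelian with $\cb(G)<\omega$ via Proposition~\ref{hirsch}. For the step, classify infinite $N\triangleleft G$ by the $\mathbf{Q}$-rank of its image $\pi(N)\subseteq Q$: if positive, then $G/N$ satisfies part~\ref{in1} with rank $\le d-1$ and by induction $\cb(G/N)<\omega^d$; if zero, then $N\cap M$ is an infinite $\mathbf{Z}[Q]$-submodule and one reduces to the lattice of $\mathbf{Z}[Q]$-submodules of $M$. This module-theoretic core is the main obstacle: the key lemma is that for $M$ a finitely generated $\mathbf{Z}[Q]$-module, the CB rank of $\{0\}$ in the submodule lattice is $<\omega^{d+1}$. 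I would prove this by a parallel induction on Krull dimension, using Noetherianity of $\mathbf{Z}[Q]$ (Hilbert basis) and the Bieri--Strebel invariant to track which submodules admit finitely generated quotients.

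Sharpness in part~\ref{in1} is the computation $\cb(\mathbf{Z}^k\wr\mathbf{Z}^d)=\omega^d\cdot k$. The upper bound follows from the first half. For the lower bound, construct an accumulating descending family of $\mathbf{Z}[\mathbf{Z}^d]$-submodules of the base $\mathbf{Z}[\mathbf{Z}^d]^k$, stratified first by index among the $k$ direct summands (giving the factor $k$) and within each summand by prime ideals of descending Krull dimension in $\mathbf{Z}[\mathbf{Z}^d]$ (giving the factor $\omega^d$).

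Parts~\ref{melib} and \ref{itemsplit} refine part~\ref{in1} using the $d$-generation constraint via the identification of $G^{ab}$ as an extension of $Q$ by $M_Q/(\text{image of the commutator pairing})$, with $M_Q=M/\Delta_Q M$. For the split case~\ref{itemsplit}, the pairing vanishes, so $G^{ab}=Q\oplus M_Q$ and $d$-generation forces $\text{rank}(Q)+\text{rank}(M_Q)\le d$. Either $\text{rank}(Q)\le d-1$ (and part~\ref{in1} directly yields $\cb(G)<\omega^d$) or $\text{rank}(Q)=d$ with $M_Q$ torsion; in the latter case upper semicontinuity of fiber rank forces $M$ to be $\mathbf{Z}[Q]$-torsion, and a refined form of the module lemma for torsion modules tightens the bound to $<\omega^d$. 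Sharpness of~\ref{itemsplit} for $d\ge 2$ is witnessed by $\mathbf{Z}\wr\mathbf{Z}^{d-1}$ (realizing $\omega^{d-1}$) together with further $Q\ltimes M$ constructions with richer module $M$ yielding CB ranks approaching $\omega^d$. For part~\ref{melib}, the analysis must accommodate a possibly nontrivial commutator pairing; bounding the effective $\mathbf{Z}[Q]$-module rank of $M$ in terms of $d$ yields $\cb(G)\le\omega^d\cdot(d-1)$. The equality case is characterized via the Magnus embedding, identifying the derived subgroup of $F_d^{met}$ as the syzygy $\ker(\mathbf{Z}[\mathbf{Z}^d]^d\to\Delta_Q)$ of $\mathbf{Z}[\mathbf{Z}^d]$-rank $d-1$; uniqueness follows because any proper metabelian quotient of $F_d^{met}$ strictly reduces either $\text{rank}(Q)$ or this effective module rank.
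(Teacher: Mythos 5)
Your overall blueprint---replace $\cb$ by a transfinite module-theoretic induction, reduce to the submodule lattice of the $\mathbf{Z}[Q]$-module $M$, and compute the extremal cases explicitly---is in the right spirit, but it differs from the paper in a way that leaves a real gap, and several of the ingredients you invoke are not the ones doing the work.

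The paper does not run a direct transfinite induction on $\mathcal{N}(G)$ via Proposition~\ref{propi}. Instead, it introduces an ordinal-valued \emph{reduced length} $\ell'$ (Gulliksen-style) on $H$-groups, proves $\cb(G)=\ell'(G)$ for groups that are residually finite together with all quotients (Proposition~\ref{cblp}), and derives a decomposition $\cb(G)=\ell'(M/W(M))+h_G(\mathrm{Hir}(G))$ from extension lemmas for $\ell'$ (Lemma~\ref{extp}, Proposition~\ref{ellprimedecompo}, Corollary~\ref{owncentralizer}). This single ordinal invariant is precisely what makes the supremum in your induction close. Your proposed induction on $d$ does \emph{not} close: when $\pi(N)\subseteq Q$ has rank zero, the quotient $G/N$ still has a top abelian quotient of rank $d$, so the inductive hypothesis for $d-1$ does not apply, and you cannot conclude from ``each $\cb(G/N)<\omega^{d+1}$'' that the supremum is $<\omega^{d+1}$ without an auxiliary well-founded rank on $M$ that interleaves correctly with the induction on $d$. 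The paper supplies exactly that auxiliary rank ($\ell'$); your ``key lemma'' about the submodule lattice is the right statement, but you still need the bridge from module CB-rank to group CB-rank (the paper's Corollary~\ref{owncentralizer}), and you do not say how to build it. Also, the Bieri--Strebel invariant plays no role in these CB bounds---in the paper it is used only to certify finite presentability of the examples in Theorem~\ref{thmain}; citing it here is a red herring.

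For part~(\ref{itemsplit}), your route is genuinely different from the paper's and can in principle work. The paper argues by contradiction: if $\mathrm{rank}(Q)=d$ and $G$ is $d$-generated split, then $G^{\mathrm{ab}}=M_Q\times Q$ forces $M_Q=0$, whence $M=\Delta_Q M$; after killing the torsion submodule Nakayama gives $M=0$, a contradiction. Your version (``$M_Q$ torsion $\Rightarrow$ $M$ is $\mathbf{Z}[Q]$-torsion by semicontinuity of fiber rank $\Rightarrow$ $\dim M\le d$ $\Rightarrow$ $\ell'(M)<\omega^d$'') is a legitimate alternative, since $M\otimes\kappa(\Delta_Q)=0$ implies $\Delta_Q\notin\mathrm{Supp}(M)$ and hence $(0)\notin\mathrm{Supp}(M)$, i.e.\ $M$ is torsion. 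But you should notice that the $d$-generation constraint actually forces $M_Q=0$, not merely torsion, and you still need the bound $\ell'(G)\le\ell'(Q)\oplus\ell'(M)$ to conclude; ``a refined form of the module lemma for torsion modules'' is not a proof. The sharpness claim for~(\ref{itemsplit}) is where your sketch is weakest: $\mathbf{Z}\wr\mathbf{Z}^{d-1}$ only realizes $\omega^{d-1}$, and you gesture at ``further constructions'' without giving them. The paper's construction $G_n=A_n\rtimes\mathbf{Z}^d$ with $A_n=\mathbf{Z}[\mathbf{Z}^d]/(2-x_2)^n$ (Lemmas~\ref{gndg} and~\ref{lprian}) is essential: $A_n$ is a local phenomenon where $(1-x_2)$ is a unit, which makes $G_n$ $d$-generated even though $M=A_n$ is all of the base, and $\ell'(A_n)=\omega^{d-1}\cdot n$ gives ranks cofinal in $\omega^d$.

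For part~(\ref{melib}), your use of the Magnus embedding is exactly the paper's (Proposition~\ref{fmd}), but the ``if and only if'' is asserted without argument. In the paper this requires the statement that every proper quotient of $\mathbf{FM}_d$ has strictly smaller reduced length (Theorem~\ref{fm}); your one-line claim that any proper quotient ``strictly reduces either $\mathrm{rank}(Q)$ or this effective module rank'' is the right intuition but is not a proof---in particular the case of a proper quotient in which $M/(N\cap M)$ still has rank $d-1$ but acquires torsion needs to be dealt with.

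In summary: the sharpness computation for $\mathbf{Z}^k\wr\mathbf{Z}^d$ and the free-metabelian computation are essentially the paper's, and your split-case upper bound is a workable variant of the paper's. But (i) the induction for the general upper bound in part~(\ref{in1}) does not close as stated and needs the reduced-length machinery (or an explicit substitute); (ii) the sharpness in part~(\ref{itemsplit}) for $d\ge 2$ is missing the key family of examples; (iii) the equality case in part~(\ref{melib}) is asserted, not proved; and (iv) Bieri--Strebel should not be cited in the CB bound.
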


We actually give a precise computation of $\cb(G)$ for any finitely generated metabelian group. For a general finitely generated metabelian group $G$, we proceed as follows. If $N$ is a normal subgroup of $G$, define the $G$-{\it Hirsch length} $h_G(N)$ as the supremum of lengths $k$ of chains of normal subgroups of $G$ contained in $N$ $$N_0\subset N_1\subset \dots \subset N_k,\quad N_i/N_{i-1}\textnormal{ infinite }\forall i.$$
The {\it Hirsch radical} of $G$ is the largest normal subgroup $\textnormal{Hir}(G)=N$ of $G$ such that $h_G(N)<\infty$. This is well-defined, since $G$ satisfies max-n. 

In Section \ref{length}, we recall the notion of reduced length of modules introduced in \cite{Cor}; this is an ordinal-valued length characterized, when $A$ is a finitely generated commutative ring, by the property, for finitely generated $A$-modules $M$
$$\ell'(M)=\sup\{\ell'(M/N)+1:N\textnormal{ infinite $A$-submodule of }M\}.$$
Notably, if $M$ has Krull dimension $d\ge 0$, then $$\omega^{d-1}\le\ell'(M)<\omega^d$$ (with $\omega^{-1}=0$). Let $W(M)$ denote the largest $A$-submodule of $M$ of Krull dimension $\le 1$.

\begin{thm}\label{cbme}
Let $G$ be a finitely generated metabelian group in an extension
$$1\to M\to G\to Q\to 1,$$
with $M,Q$ abelian, and view $M$ as a $\mathbf{Z}[Q]$-module. Then
\begin{enumerate}
\item\label{icb} $\cb(G)=\ell'(M/W(M))+h_G(\textnormal{Hir}(G))$. 
\item\label{est} In particular, if $d$ is the Krull dimension of the $\mathbf{Z}[Q]$-module $M$, we have
$$\ell'(M)\le\cb(G)<\ell'(M)+\omega.$$
\item\label{pri} If $P$ is a prime ideal in $\mathbf{Z}[Q]$, if $\mathbf{Z}[Q]/P$ has Krull dimension $d\ge 2$, $M$ is isomorphic to a torsion-free $\mathbf{Z}[Q]/P$-module of rank $r$, and the action of $Q$ on $M$ is faithful, then $\cb(G)=\omega^{d-1}\cdot r$.
\end{enumerate}
\end{thm}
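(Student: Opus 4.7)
The plan is to establish part~(i) directly, from which parts~(ii) and~(iii) follow as corollaries. The ambient tool is Proposition~\ref{propi}: a finitely generated metabelian group satisfies max-n and has all quotients residually finite by \cite{Hall,Hall2}, so $\mathcal{N}(G)$ is scattered and
$$\cb(G) = \sup\{\cb(G/N) + 1 : N \trianglelefteq G \text{ with } N \text{ infinite}\}.$$

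To prove~(i), set $f(G) := \ell'(M/W(M)) + h_G(\textnormal{Hir}(G))$, and prove $\cb(G) = f(G)$ by transfinite induction via the matching recursion
$$f(G) = \sup\{f(G/N) + 1 : N \trianglelefteq G \text{ infinite}\}.$$
The analysis splits according to the interaction of $N$ with $\textnormal{Hir}(G)$. When $N \subseteq \textnormal{Hir}(G)$, passing to $G/N$ leaves $M/W(M)$ essentially unchanged and $h_G(\textnormal{Hir}(G))$ drops by the $G$-Hirsch length of $N$, reducing the recursion to finite ordinal arithmetic inside the Hirsch radical. When $N$ has infinite image in $G/\textnormal{Hir}(G)$, the ordinal $\ell'(M/W(M))$ itself decreases, and one tracks this via the defining recursion for $\ell'$ from \cite{Cor} applied to the image of $N \cap M$ in $M/W(M)$, while checking that the $W$-construction and the Hirsch radical behave well under the quotient.

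Parts~(ii) and~(iii) are then corollaries of~(i). For~(ii): $W(M)$ has Krull dimension at most $1$, so $\ell'(W(M)) < \omega$, and quasi-additivity of reduced length across $0 \to W(M) \to M \to M/W(M) \to 0$ forces $\ell'(M)$ to differ from $\ell'(M/W(M))$ by only a finite ordinal; combined with $h_G(\textnormal{Hir}(G)) < \omega$ (by definition of the Hirsch radical), this yields $\ell'(M) \le \cb(G) < \ell'(M) + \omega$. For~(iii): since $M$ is torsion-free over the Noetherian domain $A := \mathbf{Z}[Q]/P$ of Krull dimension $d \ge 2$, every nonzero $\mathbf{Z}[Q]$-submodule of $M$ has Krull dimension exactly $d \ge 2$, which forces $W(M) = 0$ and, by the standard reduced-length computation from \cite{Cor}, gives $\ell'(M) = \omega^{d-1} \cdot r$; moreover any $N \trianglelefteq G$ with $h_G(N) < \infty$ meets $M$ in a submodule of finite $\mathbf{Z}$-rank, necessarily $0$ (since nonzero submodules of $M$ have infinite $\mathbf{Z}$-rank when $d \ge 2$), so $N$ embeds into $Q$ and satisfies $[N, M] \subseteq N \cap M = 0$, hence $N = 1$ by faithfulness of the $Q$-action; thus $\textnormal{Hir}(G) = 1$ and $\cb(G) = \ell'(M) = \omega^{d-1} \cdot r$. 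The main obstacle is the recursion for $f$ in~(i): one must simultaneously control the evolution of $W(M)$, $\textnormal{Hir}(G)$, and $\ell'$ under arbitrary metabelian quotients, and verify that the claimed supremum is both an upper bound and attained; the compatibility between the group-theoretic Hirsch length and the module-theoretic reduced length is where the precise form of the definitions pays off, and is the technical heart of the argument.
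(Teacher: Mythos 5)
The paper proves part~(\ref{icb}) very differently and far more economically: it first identifies $\cb(G)$ with the ordinal $\ell'(G)$ via Proposition~\ref{cblp}, then invokes the previously established decomposition Proposition~\ref{ellprimedecompo}, which expresses $\ell'(G)$ as $\ell'(M/W(M))+\ell'(W(G))$ for any extension over a base with small $\ell'$; the translations $W(G)=\textnormal{Hir}(G)$ and $\ell'_G(\cdot)=h_G(\cdot)$ on that piece are routine via Lemma~\ref{hirlp}. You instead propose a fresh transfinite induction showing that $f(G)=\ell'(M/W(M))+h_G(\textnormal{Hir}(G))$ obeys the same recursion as $\cb$. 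This amounts to re-deriving Proposition~\ref{ellprimedecompo} (and its supporting Lemmas~\ref{ext}, \ref{extp}, \ref{reducw}) inside the theorem. The plan is plausible, but you leave exactly that step---tracking $W$, $\textnormal{Hir}$, and $\ell'$ under an arbitrary quotient and matching the supremum---as an acknowledged hole, and that is where essentially all the difficulty sits. The route is not wrong, but you have not closed it, while the paper's general lemmas already do this work; once you notice that $\cb(G)=\ell'(G)$ for finitely generated metabelian groups (residual finiteness of all quotients, Proposition~\ref{cblp}), part~(\ref{icb}) is a corollary rather than a new induction.

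Two localized points on the corollaries. For~(\ref{est}), the left inequality $\ell'(M)\le\cb(G)$ does not drop out of ``$\ell'(M)$ and $\ell'(M/W(M))$ differ by a finite ordinal'' alone: combined with~(\ref{icb}) it reduces to $\ell'(W(M))\le h_G(\textnormal{Hir}(G))$, which does hold because $W(M)\subseteq\textnormal{Hir}(G)$, but you should say so (the paper avoids this by observing directly that $\ell'_G(M)\le\ell'_G(G)=\cb(G)$ from Lemma~\ref{extp}). For~(\ref{pri}), the intermediate claim that $N\cap M$ has \emph{finite $\mathbf{Z}$-rank} whenever $h_G(N)<\infty$ is false in general: a module of finite $G$-Hirsch length can have infinite $\mathbf{Z}$-rank, for instance $\mathbf{F}_p[t^{\pm 1}]$. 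The correct bridge is simply that $h_G(N)<\infty$ forces $N\cap M\subseteq W(M)=\{0\}$ (nonzero torsion-free $\mathbf{Z}[Q]/P$-submodules have Krull dimension $d\ge 2$, hence $\ell'\ge\omega$). Your final conclusion $N\cap M=\{0\}$ and $\textnormal{Hir}(G)=\{1\}$ is right, but the detour through $\mathbf{Z}$-rank is spurious.
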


To prove the finite presentability of the virtually metabelian groups in Theorem \ref{thmain}, we apply a general criterion due to Bieri and Strebel \cite{BS}. This criterion is explained in Section \ref{bs}. It is important that the groups in Theorem \ref{thmain} are finitely presented: indeed, if $G$ is a group with $d$ given generators, then $\mathcal{N}(G)$ is always closed in $\mathcal{G}_d$, but is open if and only if $G$ is finitely presented (see \cite[Lemma 1.3]{CGP}). Otherwise we can define $\cb^e(G)$ as the Cantor-Bendixson rank of $G$ as an element of $\mathcal{G}_d$. By \cite[Lemma 1]{CGP}, this does not depend on the choice of a finite generating family of $G$. If $G$ is finitely presented then $\cb^e(G)=\cb(G)$. Groups with $\cb^e(G)=0$ are finitely presented and are the main subject of the paper \cite{CGP}. On the other hand, we have the following result, which was asserted without proof in \cite[Section 6]{CGP} in the case of $\mathbf{Z}\wr\mathbf{Z}$. We give here a proof in Section \ref{secw}.

\begin{prop}Let $H,G$ be finitely generated, with $H\neq\{1\}$ and $G$ infinite. Then
$$\cb^e(H\wr G)=\mathfrak{C}.$$\label{wreathcond}
\end{prop}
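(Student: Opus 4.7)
The plan is to show $(H\wr G;\mathbf{a})$ is a condensation point of $\mathcal{G}_d$ (equivalently $\cb^e(H\wr G)=\mathfrak{C}$): since $\mathcal{G}_d$ is Polish, this reduces to producing $2^{\aleph_0}$ marked groups in every basic neighbourhood of $(H\wr G;\mathbf{a})$. Writing $R=\ker(F_d\twoheadrightarrow H\wr G)$ and $B_r$ for the word-length ball of radius $r$ in $F_d$, the radius-$r$ neighbourhood is exactly $\{F_d/N : N\cap B_r = R\cap B_r\}$, so the task becomes exhibiting $2^{\aleph_0}$ such kernels $N$ for every $r\geq 1$.

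The foundational input is the classical theorem of G.~Baumslag that $H\wr G$ is finitely presented only if $H=\{1\}$ or $G$ is finite; in our situation therefore $R$ is not finitely generated as a normal subgroup of $F_d$. From this I would inductively choose relations $r_1,r_2,\dots\in R$ of increasing lengths (all $>r$) so that no $r_n$ lies in the normal closure of $R\cap B_r$ together with the other $r_m$'s. Given such a sequence, for each $\sigma\in 2^{\mathbf{N}}$ define
$$N_\sigma:=\langle\!\langle\, R\cap B_r\,\cup\,\{r_n:\sigma(n)=1\}\,\rangle\!\rangle\trianglelefteq F_d.$$
By construction $N_\sigma\cap B_r = R\cap B_r$, so every $K_\sigma:=F_d/N_\sigma$ sits in the radius-$r$ neighbourhood of $H\wr G$; the mutual independence of the $r_n$'s ensures that distinct $\sigma$'s give distinct marked groups, delivering the desired continuum.

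The main obstacle is securing this mutual independence. A greedy choice provides only one-sided irredundancy (each $r_n$ outside the normal closure of the \emph{previous} relations), which is easy from non-finite-presentation but only gives countably many distinct $N_\sigma$'s; upgrading to mutual independence requires a structural handle on $R$. The natural such handle comes from the wreath product structure: when $H$ is abelian, the $\mathbf{Z}[G]$-module structure on $H^{(G)}$ together with a Fox-calculus analysis of the relation module $R/[R,F_d]$ exhibits an infinite independent family of relations coming from commutators $[h,\,ghg^{-1}]$ as $g$ ranges over an infinite subset of $G$ such that the $G$-translates $g\cdot h$ are pairwise non-comparable in $H^{(G)}$. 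The non-abelian case reduces to this by choosing two commuting non-trivial elements of $H$ (which always exist, e.g.\ any non-trivial element together with a power of it) and working with commutators built from these. Once full independence is in place, the perfect set of $K_\sigma$'s places $(H\wr G;\mathbf{a})$ in the perfect kernel of $\mathcal{G}_d$, proving the proposition.
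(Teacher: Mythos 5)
Your overall strategy is the same as the paper's: use the commutation relations $[h,ghg^{-1}]$ (with $g$ ranging over an infinite subset of $G$) as the Cantor family of relators, build normal subgroups by selecting arbitrary subsets, and check continuity at the base point. You have also correctly identified where the difficulty lies: a greedy choice only yields one-sided irredundancy and hence countably many groups, so one needs genuine \emph{mutual} independence, and this cannot follow from non-finite-presentability alone (indeed, whether every infinitely presented finitely generated group is a condensation point is open; see the Question in the introduction of this paper).

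The gap is exactly at that crucial step. You assert that ``a Fox-calculus analysis of the relation module $R/[R,F_d]$ exhibits an infinite independent family'' but you do not carry it out, and this is not a routine computation: you would need to show that $[h,ghg^{-1}]$ is not in the normal closure in $F_d$ of $R\cap B_r$ together with \emph{all} the other relators $[h,g'hg'^{-1}]$, $g'\neq g$, which amounts to controlling the entire relation module of the (non-finitely-presented) group $H\wr G$, not just an easy abelianized piece. The phrase ``$G$-translates $g\cdot h$ pairwise non-comparable in $H^{(G)}$'' does not yet encode any independence statement. The paper sidesteps all this by working not in $F_d$ but in the intermediate group $S=H*G$ (more generally $S=\langle H,G\mid [H,L]\rangle$ for a permutational wreath product $H\wr_{G/L}G$), where the needed independence of the relations $[H,gHg^{-1}]$ is precisely the content of a cited lemma on double cosets (\cite[Lemma 2.3]{CorGD}); that lemma is the actual engine, and without some substitute for it your argument does not close. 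Separately, the ``reduction of non-abelian $H$ to two commuting elements'' is both unnecessary and misleading: the commutators $[H,gHg^{-1}]$ make sense and work for arbitrary $H\neq\{1\}$, and passing to a cyclic subgroup of $H$ would change the wreath product and hence the kernel $R$ you are analysing.
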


\begin{que}
It would be interesting to know if any infinitely presented, finitely generated metabelian group satisfies $\cb^e(G)=\mathfrak{C}$. With L.~Guyot, we are able to prove it in the special case of abelian-by-cyclic finitely generated groups, for instance for $\mathbf{Z}[1/6]\rtimes_{2/3}\mathbf{Z}$.
\end{que}

\begin{que}
The bounds given in Theorem \ref{thmbounds} are not optimal when the metabelian group is assumed finitely presented. What are then the optimal bounds? 
\end{que}

Let $X$ be a topological space. The least $\alpha$ such that $X^{(\alpha)}=X^{(\alpha+1)}$ is called the Cantor-Bendixson rank of $X$ and is denoted by $\CB(X)$. This is always well-defined, since the non-increasing chain $(X^{(\alpha)})$ always stabilizes. For instance, $\CB(X)=0$ if and only if $X$ is perfect. In general, $\CB(X)$ is the supremum of $\CB_X(x)+1$, where $x$ ranges over the points not in the condensation part of $X$. By Theorem \ref{thmain}, for every $d\ge 2$, we have $\CB(\mathcal{G}_d)\ge\omega^\omega$. 
If $G$ is a group, and if $\cb(G)\neq\mathfrak{C}$, then $\CB(\mathcal{N}(G))\ge\cb(G)+1$. This is an equality under the assumptions of Proposition \ref{propi}, but not in general: for instance, in \cite[Proof of Theorem~5.3]{CGP}, an isolated group $G$ was given with a normal subgroup $K$ such that $G/K$ is free of rank two. Then since $G$ is isolated, $\cb(G)=0$, but it follows from Theorem \ref{thmain} that $\CB(\mathcal{N}(G))\ge\omega^\omega$, because since $G/K$ is finitely presented, $\mathcal{N}(G)$ contains $\mathcal{N}(G/K)$ as clopen subset.

Last but not least, we can ask

\begin{que}
For $2\le d<\infty$, do we have $\CB(\mathcal{G}_d)>\omega^\omega$?
\end{que}
I do not have any clue how to construct a finitely generated group $G$ with $\cb(G)=\omega^\omega$, although it does probably exist. Since $\mathcal{G}_d$ is compact metrizable, $\CB(\mathcal{G}_d)$ is a countable ordinal. It would be surprising if its value would depend on $d$.

\setcounter{tocdepth}{1}
\tableofcontents
\noindent \textbf{Outline.} The groups referred to in Theorem \ref{thmain} are constructed in Section \ref{sectionvmg}, at the end of which we indicate why they satisfy the claimed property; this relies on results of Sections \ref{length}, \ref{finiteaction}, and \ref{bs}.
The assertions in Proposition \ref{propi} are particular cases of Lemma \ref{maxscat}, Proposition \ref{cblp} and Corollary \ref{ccblp}.
The inequality in Theorem \ref{thmbounds}(\ref{in1}) follows from Theorem \ref{cbme}(\ref{est}), and the example in Theorem \ref{thmbounds}(\ref{in1}) is obtained in Section \ref{se}. Theorem \ref{thmbounds}(\ref{melib}),(\ref{itemsplit}) are obtained in Section \ref{fsme}. Theorem \ref{cbme} is proved in Paragraph \ref{pcb}.

\medskip

Section \ref{sectionvmg} essentially extends the introduction. Sections \ref{se}, \ref{finiteaction}, and Paragraph \ref{splitm} partly rely on Section \ref{length}. At this notable exception, the different sections can be read independently.

\medskip

\noindent{\bf Acknowledgments.} I thank Alexander Kechris, Simon Thomas, Robert Young, and especially Luc Guyot and Pierre de la Harpe for valuable discussions and suggestions.

\section{Examples of finitely presented virtually metabelian groups}\label{sectionvmg}

\subsection{Construction}
Let us describe the groups constructed in the second part of Theorem \ref{thmain}. We postpone all the proofs to Paragraph \ref{cproofs}. The easiest (and most natural) construction provides a 4-generated group; we then explain how to reduce to 3, and then 2 generators.

Fix the integer $d\ge 1$, and $d$ formal variables $(x_i)$, which for convenience we view as indexed by $\mathbf{Z}/d\mathbf{Z}$. Consider the ring $A_d=\mathbf{Z}[(x_i),s^{-1}]$ where $s=\prod_i(x_i-x_i^2)$. Then $\mathbf{Z}^{2d}=\mathbf{Z}^{\mathbf{Z}/d\mathbf{Z}}\times \mathbf{Z}^{\mathbf{Z}/d\mathbf{Z}}$ acts by multiplication on $A_d$, where, if the canonical basis is denoted by $((e_i),(f_i))$, $e_i$ acts by multiplication by $x_i$ and $f_i$ by multiplication by $1-x_i$.

The semidirect product $H_d=A_d\rtimes\mathbf{Z}^{2d}$ has a faithful representation by $2\times 2$-matrices over $A_d$
$$(0,e_i)\mapsto
\begin{pmatrix}
  x_i & 0  \\
  0 & 1  \\
\end{pmatrix};\quad(0,f_i)\mapsto
\begin{pmatrix}
  1-x_i & 0  \\
  0 & 1  \\
\end{pmatrix};\quad u=(1,0)\mapsto
\begin{pmatrix}
  1 & 1  \\
  0 & 1  \\
\end{pmatrix},$$
whose image is the set of all matrices of the form
$$\begin{pmatrix}
  \prod_ix_i^{n_i}(1-x_i)^{m_i} & P  \\
  0 & 1  \\
\end{pmatrix},((n_i),(m_i))\in\mathbf{Z}^{2d},P\in A_d.$$

\begin{prop}
The $(2d+1)$-generated metabelian group $H_d$ is finitely presented, and $$\cb(H_d)=\omega^d.$$
\end{prop}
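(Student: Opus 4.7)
The plan is to prove the three assertions of the proposition separately. Generation by $2d+1$ elements (namely $e_1, f_1, \ldots, e_d, f_d$ together with $u=1\in A_d$) is immediate, since $A_d$ is cyclic as a $\mathbf{Z}[\mathbf{Z}^{2d}]$-module.

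The Cantor-Bendixson rank computation follows from Theorem \ref{cbme}(\ref{pri}) applied to the split extension $1\to M\to H_d\to Q\to 1$ with $M=A_d$, $Q=\mathbf{Z}^{2d}$. I would identify the $\mathbf{Z}[Q]$-module $M$ as $\mathbf{Z}[Q]/P$, where $P$ is the ideal generated by the $d$ elements $e_i+f_i-1$; the quotient is ring-isomorphic to the localization $\mathbf{Z}[x_1,\ldots,x_d][s^{-1}]=A_d$, which is an integral domain, so $P$ is prime. Its Krull dimension equals $d+1$: it shares the fraction field of $\mathbf{Z}[x_1,\ldots,x_d]$, and residue fields $\mathbf{F}_q$ with $\alpha_i\in\mathbf{F}_q\setminus\{0,1\}$ exhibit maximal ideals of the required height. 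As an $A_d$-module, $M$ is free of rank $r=1$. Faithfulness of the $Q$-action follows from unique factorization in $A_d$: being a UFD, its unit group is $\{\pm 1\}\times\langle x_1,1-x_1,\ldots,x_d,1-x_d\rangle\cong\{\pm 1\}\times\mathbf{Z}^{2d}$, so $Q$ injects into $A_d^\times$. Theorem \ref{cbme}(\ref{pri}) now gives $\cb(H_d)=\omega^{(d+1)-1}\cdot 1=\omega^d$.

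For finite presentation, I would invoke the Bieri-Strebel criterion developed in Section \ref{bs}: a finitely generated metabelian group $M\rtimes Q$ with $Q$ abelian is finitely presented iff, for every nontrivial character $\chi\in\Hom(Q,\mathbf{R})$, at least one of $\pm\chi$ lies in the $\Sigma$-invariant $\Sigma_M$ (equivalently, $M$ is finitely generated as a $\mathbf{Z}[Q_\chi]$-module with $Q_\chi=\chi^{-1}([0,\infty))$). The key algebraic leverage comes from the relations $x_i+(1-x_i)=1$: they yield the identity $x_i=1-(1-x_i)$ (recovering $x_i$ whenever $1-x_i$ is accessible in $\mathbf{Z}[Q_\chi]$) together with partial-fraction identities such as $\frac{1}{x_i^n(1-x_i)}=\sum_{k=1}^n\frac{1}{x_i^k}+\frac{1}{1-x_i}$ (and their multi-index versions), which produce negative powers of $x_i$ from monomials of positive $\chi$-value. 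A case analysis on the signs of the pairs $(\chi(e_i),\chi(f_i))$ shows that for every nonzero $\chi$, the tameness condition can be arranged either for $\chi$ or for $-\chi$. The main obstacle is precisely this case analysis, especially along boundary directions where some coordinate $\chi(e_i)$ or $\chi(f_i)$ vanishes; it is the symmetric role of $x_i$ and $1-x_i$ in the construction of $A_d$ that makes the Bieri-Strebel condition hold uniformly.
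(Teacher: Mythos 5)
Your Cantor-Bendixson computation is correct and takes a slightly different route from the paper. You invoke Theorem \ref{cbme}(\ref{pri}) after checking that $P=\ker(\mathbf{Z}[Q]\twoheadrightarrow A_d)$ is prime with $\mathbf{Z}[Q]/P\cong A_d$ of Krull dimension $d+1$, that $M=A_d$ has rank $1$, and that the $Q$-action is faithful (from $A_d^\times\cong\{\pm1\}\times\mathbf{Z}^{2d}$ by unique factorization). The paper instead applies Corollary \ref{ideal} to the ideal $A_d\subset A_d$ together with Corollary \ref{owncentralizer}; both routes ultimately rest on Propositions \ref{elc} and \ref{cblp} and are equivalent. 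Your version folds the faithfulness check and the $\mathrm{Hir}(H_d)=\{1\}$ verification into a single top-level citation; the paper's route via Corollary \ref{ideal} is phrased for finite-group actions so that it can be reused verbatim for $\Gamma_d$, $\Gamma'_d$, $\Lambda_d$.

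The finite presentation argument has a real gap. You correctly invoke the Bieri--Strebel criterion, but the plan --- a case analysis on the \emph{signs} of the pairs $(\chi(e_i),\chi(f_i))$, aided by identities such as $\frac{1}{x_i^n(1-x_i)}=\sum_{k=1}^n\frac{1}{x_i^k}+\frac{1}{1-x_i}$ --- cannot succeed from sign data alone. Already in one variable, $\Gamma(\mathbf{Z}[u,(u+u^2)^{-1}])$ is the union of three rays $\mathbf{R}_+(1,0)\cup\mathbf{R}_+(0,1)\cup\mathbf{R}_+(-1,-1)$, a one-dimensional subset of $\mathbf{R}^2$, so the position of $\chi_i$ relative to $\Gamma$ is not determined by the open sign pattern. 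Moreover, even after one knows each one-variable $\Gamma(M_i)$, one still needs to bound $\Gamma(A_d)$ in terms of the factors; this is exactly the content you are missing, and it is where the "boundary direction" trouble you flag lives.

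The paper's resolution is to recognize $A_d$ as the $d$-fold $\mathbf{Z}$-tensor product $\bigotimes_i\mathbf{Z}[x_i,(x_i-x_i^2)^{-1}]$ and invoke Corollary \ref{cortensor}, which rests on the restriction $\Gamma(M_1\otimes M_2)\subset\Gamma(M_1)\times\Gamma(M_2)$; this shows $\Gamma^\pm=\{0\}$ passes to tensor products and reduces everything to the one-variable case. That case, in turn, is tamed by the hidden order-$3$ ring automorphism $u\mapsto -(1+u)/u$ cycling the three rays, so that the direct partial-fraction verification (of the type you began) is only needed in the single open quadrant $a,b>0$. Without the tensor-product reduction you would have to establish finite generation of $A_d$ over $\mathbf{Z}[Q_\chi]$ directly in a $2d$-dimensional cone for every sign configuration, and the identities you cite do not by themselves close the degenerate cases.
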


The finite presentability of $H_d$ is obtained from the computation of the Bieri-Strebel geometric invariant carried out in Section \ref{bs}.

The computation of the Cantor-Bendixson rank essentially relies on the computation of the Cantor-Bendixson rank of the ring $A_d$ (i.e.~of the ideal $\{0\}$ in the set of ideals of $A_d$), which was proved in \cite{Cor} to be equal to $\omega^d$.

Next, we can form the semidirect product $H_d\rtimes\mathbf{Z}/d\mathbf{Z}$, where $\mathbf{Z}/d\mathbf{Z}$ (whose canonical generator we denote by $\sigma$) permutes shifts the variables. This group is virtually metabelian, and is generated by $\{u,e_1,f_1,\sigma\}$. As it contains $H_d$ as a subgroup of finite index, it is finitely presented as well.

\begin{prop}
The $4$-generated virtually metabelian group $\Gamma_d=H_d\rtimes\mathbf{Z}/d\mathbf{Z}$ is finitely presented and satisfies $$\cb(\Gamma_d)=\omega^d.$$
\end{prop}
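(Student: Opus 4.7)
The finite presentability should follow immediately from that of $H_d$ (established in the preceding proposition), since $H_d\trianglelefteq\Gamma_d$ is of finite index $d$, and any finite extension of a finitely presented group is finitely presented. Explicitly, I would obtain a presentation of $\Gamma_d$ by adjoining $\sigma$ to a finite presentation of $H_d$, with the relations $\sigma^d=1$, $\sigma e_i\sigma^{-1}=e_{i+1}$, $\sigma f_i\sigma^{-1}=f_{i+1}$, and $\sigma u\sigma^{-1}=u$ (indices modulo $d$).

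For the Cantor-Bendixson rank, the plan is to first reduce to a computation inside the $\sigma$-invariant part of $\mathcal{N}(H_d)$. The canonical projection $\Gamma_d\twoheadrightarrow\Gamma_d/H_d=\mathbf{Z}/d\mathbf{Z}$ induces a continuous map $\mathcal{N}(\Gamma_d)\to\mathcal{N}(\mathbf{Z}/d\mathbf{Z})$, whose target is finite discrete. The preimage of $\{1\}$, namely $\{N\trianglelefteq\Gamma_d:N\subseteq H_d\}$, is therefore clopen in $\mathcal{N}(\Gamma_d)$ and contains the trivial subgroup; it coincides topologically with the closed subspace $\mathcal{N}(H_d)^\sigma\subseteq\mathcal{N}(H_d)$ of $\sigma$-invariant normal subgroups of $H_d$. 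This gives
$$\cb(\Gamma_d)=\CB_{\mathcal{N}(H_d)^\sigma}(\{1\}),$$
and the upper bound $\cb(\Gamma_d)\le\cb(H_d)=\omega^d$ is then immediate from the subspace inclusion $\mathcal{N}(H_d)^\sigma\subseteq\mathcal{N}(H_d)$.

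For the matching lower bound $\cb(\Gamma_d)\ge\omega^d$, one must exhibit enough $\sigma$-invariant normal subgroups of $H_d$ accumulating at $\{1\}$ with Cantor-Bendixson ranks approaching $\omega^d$. My plan is to revisit the proof of $\cb(H_d)=\omega^d$ from the previous proposition, which via Theorem \ref{cbme}(\ref{pri}) reduces to the computation carried out in \cite{Cor} that the zero ideal has Cantor-Bendixson rank $\omega^d$ in the ideal lattice of $A_d$, and to check that this chain of witnessing ideals can be constructed $\sigma$-equivariantly. The cleanest route is through the invariant subring $B=A_d^\sigma$: since $\sigma$ acts by cyclically permuting the variables $x_i$, the ring $A_d$ is integral (in fact finite and free of rank $d$) over $B$, so $B$ has the same Krull dimension $d+1$ as $A_d$. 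Applying the calculation of \cite{Cor} to $B$ yields a family of ideals of $B$ realizing ranks cofinal in $\omega^d$ at $(0)$, and the extension-of-scalars map $J\mapsto JA_d$ produces $\sigma$-invariant ideals of $A_d$, each of which gives a $\sigma$-invariant normal subgroup of $H_d$.

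The main obstacle I expect is precisely this last transfer: verifying that extension of scalars from $B$ to $A_d$ is continuous and does not collapse Cantor-Bendixson rank near $(0)$. This should rest on faithful flatness (or at least torsion-freeness) of $A_d$ as a $B$-module, ensuring $JA_d\cap B=J$ for ideals $J$ close to $(0)$ so that distinct $J$'s give distinct extensions; together with a careful check that $\sigma$-invariance is stable under the inductive step of the \cite{Cor} construction.
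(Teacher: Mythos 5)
The finite presentability part is correct and matches the paper: $\Gamma_d$ contains the finitely presented $H_d$ with finite index.

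The computation of $\cb(\Gamma_d)$, however, has a genuine gap at its very first step. You claim that the projection $\Gamma_d\twoheadrightarrow\mathbf{Z}/d\mathbf{Z}$ ``induces a continuous map $\mathcal{N}(\Gamma_d)\to\mathcal{N}(\mathbf{Z}/d\mathbf{Z})$'' so that $\{N\trianglelefteq\Gamma_d:N\subseteq H_d\}$ is clopen. No such continuous map exists in general: the map $N\mapsto NH_d/H_d$ is only lower semicontinuous (if $\bar g\in NH_d/H_d$, this persists under small perturbations of $N$, but the complementary condition does not, since the coset $gH_d$ is infinite). Consequently $\{N:N\subseteq H_d\}$ is closed but need not be open. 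A concrete counterexample to your general principle: in $\mathbf{Z}$ with $H=2\mathbf{Z}$, the odd-index subgroups $(2k+1)\mathbf{Z}$ converge to $\{0\}$ and none is contained in $2\mathbf{Z}$, so $\{N\le\mathbf{Z}:N\subseteq 2\mathbf{Z}\}$ is not open. What you \emph{do} have is that $\mathcal{N}(H_d)^\sigma$ is homeomorphic to a \emph{closed} subspace of $\mathcal{N}(\Gamma_d)$ containing $\{1\}$, which yields only the one-sided inequality $\cb(\Gamma_d)\ge\CB_{\mathcal{N}(H_d)^\sigma}(\{1\})$. Your proposed upper bound $\cb(\Gamma_d)\le\cb(H_d)$ therefore does not follow, and this is precisely the hard direction.

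The paper sidesteps the subspace issue entirely. The upper bound (and in fact both bounds at once) comes from Proposition \ref{cblp}, which identifies $\cb(G)$ with the reduced length $\ell'(G)$ for groups with max-n whose quotients are residually finite; then $\ell'(\Gamma_d)=\ell'_{\Gamma_d}(A_d)$ via Corollary \ref{owncentralizer}, and $\ell'_{\Gamma_d}(A_d)=\omega^d$ is computed via Corollary \ref{ideal}. The content of that corollary is exactly the ``$\sigma$-equivariance'' issue you anticipated: it shows, using the two-sided estimate $\ell_{G,d}(M)\le\ell_d(M)\le n\,\ell_{G,d}(M)$ from Section \ref{finiteaction}, that passing to $G$-invariant submodules does not change a reduced length of the form $\omega^d$. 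Your proposed route through the invariant subring $B=A_d^\sigma$ and extension of ideals $J\mapsto JA_d$ is a different and plausible idea for the \emph{lower} bound only, but even there, as you note, it is unfinished (the rank-preservation under the inductive step is exactly where the work lies, and it is cleaner to argue via length functions as the paper does than via a continuous embedding of ideal lattices). In short: the lower bound could be patched along your lines with effort, but the upper bound argument is missing an idea, and the clopen claim it rests on is false.
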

Now the proof relies on the study of the space of $\mathbf{Z}/d\mathbf{Z}$-invariant ideals in $A_d$. This fits in the context of modules endowed with an action of a finite group, and was not considered in \cite{Cor}, so we prove the necessary preliminaries in Section \ref{finiteaction}.

To pass from 4 to 3 generators, assume that $d$ is odd and replace $\sigma$ by the generator $\gamma$ of $\mathbf{Z}/2d\mathbf{Z}$ which acts on $A_d$ by ring automorphisms, mapping $x_i$ to $1-x_{i+1}$ for all $i\in\mathbf{Z}/d\mathbf{Z}$. In particular $\gamma^d$ sends $x_i$ to $1-x_i$ and $\gamma^{d+1}=\sigma$. So the group $\Gamma'_d$ generated by $\{u,e_1,\gamma\}$ contains $\Gamma_d$ as a subgroup of index 2. For similar reasons, it has Cantor-Bendixson rank $\omega^d$.

Finally, to get a 2-generated group, we consider the subgroup $\Lambda_d$ generated by $\{ue_1,\gamma\}$. Denote by $\Lambda'_d$ the normal subgroup generated by $ue_1$, so that $\Lambda_d=\Lambda'_d\rtimes\langle\gamma\rangle$.

\begin{prop}
The metabelian group $\Lambda'_d$ is finitely presented, as well as $\Lambda_d$. It lies in an extension
$$1\to M\to \Lambda'_d\to Q\to 1,$$
with $Q\simeq\mathbf{Z}^{2d}$, freely generated by the images of all $ue_i$ and $uf_i$, and $M$ is isomorphic as a $\mathbf{Z}[Q]$-module to the kernel of the ring homomorphism of $\mathbf{Z}[Q]$ onto $\mathbf{Z}[1/2]$ mapping all generators to $1/2$. Moreover, we have $$\cb(\Lambda_d)=\cb(\Lambda'_d)=\omega^d.$$
\end{prop}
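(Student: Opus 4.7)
The plan, for odd $d$, is to identify $\Lambda'_d$ concretely as a subgroup of $H_d=A_d\rtimes\mathbf{Z}^{2d}$ and deduce each claim from the preceding machinery. Because $d$ is odd, the order-$2d$ automorphism $\gamma$ acts as a single $2d$-cycle on the generators $\{e_1,\ldots,e_d,f_1,\ldots,f_d\}$ of $\mathbf{Z}^{2d}$, so the conjugates $\gamma^k\cdot ue_1\cdot\gamma^{-k}$, $k=0,\ldots,2d-1$, enumerate all elements $(1,g_i)\in H_d$ as $g_i$ runs through those generators. Hence $\Lambda'_d=\langle(1,g_i):1\le i\le 2d\rangle$ inside $H_d$, and projecting to $\mathbf{Z}^{2d}$ realizes the extension $1\to M\to\Lambda'_d\to Q\to 1$ with $Q\simeq\mathbf{Z}^{2d}$ freely generated by the images of the $ue_i$ and $uf_i$, and $M=\Lambda'_d\cap A_d$. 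Since $(1,g_i)(P,0)(1,g_i)^{-1}=(a_iP,0)$, the $\mathbf{Z}[Q]$-module structure on $M$ is multiplication by $a_i$ inside $A_d$.

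To identify $M$ as a $\mathbf{Z}[Q]$-module, I would use the crossed homomorphism $c\colon F_{2d}\to A_d$ determined by $c(x_i)=1$ and $c(uv)=c(u)+u\cdot c(v)$ (with $F_{2d}$ acting on $A_d$ through $Q\to A_d^{\times}$). The presenting map $x_i\mapsto(1,g_i)$ sends $w$ to $(c(w),\overline w)$, so $M=c([F_{2d},F_{2d}])$. A short induction gives $c([x_i,x_j])=a_i-a_j$, and the restriction of $c$ to $[F_{2d},F_{2d}]$ descends to a $\mathbf{Z}[Q]$-linear map on $[F_{2d},F_{2d}]^{ab}$. Fox calculus supplies the exact sequence $0\to[F_{2d},F_{2d}]^{ab}\to\mathbf{Z}[Q]^{2d}\to\mathfrak{a}_Q\to 0$ and shows that $[F_{2d},F_{2d}]^{ab}$ is generated over $\mathbf{Z}[Q]$ by the classes of $[x_i,x_j]$, so $M$ equals the $\mathbf{Z}[Q]$-submodule of $A_d$ generated by $\{a_i-a_j\}$: exactly the ideal $\ker(A_d\twoheadrightarrow\mathbf{Z}[1/2])$ under $a_i\mapsto 1/2$. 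Pulling back along $\mathbf{Z}[Q]\twoheadrightarrow A_d$ identifies $M$ with the kernel of the composite ring homomorphism $\mathbf{Z}[Q]\to\mathbf{Z}[1/2]$ of the statement. The containment $M\subseteq\ker(A_d\to\mathbf{Z}[1/2])$ follows independently by composing $c$ with $A_d\to\mathbf{Z}[1/2]$: the resulting crossed homomorphism is $\mathbf{Z}[Q]$-linear on commutators and vanishes on each $[x_i,x_j]$.

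Finite presentation of $\Lambda'_d$ will follow from the Bieri-Strebel criterion of Section \ref{bs}: as a nonzero ideal in $A_d$, $M$ shares the $\Sigma$-invariant behavior of $A_d$, reducing the tameness verification to the one already carried out for $H_d$; finite presentation of $\Lambda_d=\Lambda'_d\rtimes\mathbf{Z}/2d\mathbf{Z}$ is then immediate. For the Cantor-Bendixson rank, I apply Theorem~\ref{cbme}(\ref{pri}) with the prime $P=\ker(\mathbf{Z}[Q]\twoheadrightarrow A_d)$: the quotient $\mathbf{Z}[Q]/P=A_d$ has Krull dimension $d+1\ge 2$, $M$ is torsion-free of $A_d$-rank $1$ (a nonzero ideal in the domain $A_d$), and the $Q$-action is faithful, since the monomials $\prod a_i^{n_i}$ are pairwise distinct units in $A_d$ by unique factorization. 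The theorem yields $\cb(\Lambda'_d)=\omega^{(d+1)-1}\cdot 1=\omega^d$. To pass to $\cb(\Lambda_d)$, I would invoke the finite-group-action machinery of Section \ref{finiteaction} applied to $\mathbf{Z}/2d\mathbf{Z}$ acting on $\Lambda'_d$, mirroring the passage from $H_d$ to $\Gamma_d$.

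The main obstacle lies in step two, namely in establishing that $c([F_{2d},F_{2d}])$ fills out all of $\ker(A_d\to\mathbf{Z}[1/2])$ and not merely a proper submodule; the crucial inputs are the $\mathbf{Z}[Q]$-linearity of $c$ on commutators and the generation of $[F_{2d},F_{2d}]^{ab}$ by $\{[x_i,x_j]\}$ via Fox calculus. Given this identification, verifying faithfulness of the action, checking tameness for Bieri-Strebel, and transferring the rank computation from $\Lambda'_d$ to $\Lambda_d$ are all routine applications of the material built up earlier in the paper.
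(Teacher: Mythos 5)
Your overall strategy matches the paper's: identify $\Lambda'_d$ concretely inside $H_d$ as the subgroup generated by the conjugates $\gamma^k(ue_1)\gamma^{-k}$, recognize $M=\Lambda'_d\cap A_d$ as a nonzero ideal of $A_d$, get finite presentation by a Bieri--Strebel argument reducing to $H_d$ (the paper cites Lemma \ref{bigsubgroup} for exactly this), and compute the rank from the Krull dimension of $A_d$ together with the residual-finiteness machinery of Section \ref{length}. Your detour through Fox calculus to show that $M$ is generated over $\mathbf{Z}[Q]$ by the commutator classes is a legitimate (and more explicit) way to establish what the paper leaves implicit; the small slip in the commutator formula (one gets $c([x_i,x_j])=a_i^{-1}a_j^{-1}(a_i-a_j)$, not $a_i-a_j$) is harmless since the $a_i$ are units, so the generated ideal is the same. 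For the rank, you apply Theorem \ref{cbme}(\ref{pri}) whereas the paper invokes Corollaries \ref{ideal} and \ref{owncentralizer} directly, but \ref{cbme}(\ref{pri}) is proved from those, so this is the same argument repackaged.

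There is one genuine slip, in the sentence ``Pulling back along $\mathbf{Z}[Q]\twoheadrightarrow A_d$ identifies $M$ with the kernel of the composite ring homomorphism $\mathbf{Z}[Q]\to\mathbf{Z}[1/2]$.'' What you have actually shown is that $M=\ker(A_d\to\mathbf{Z}[1/2])$ as an ideal of $A_d$, and the preimage of $M$ in $\mathbf{Z}[Q]$ is indeed $\ker(\mathbf{Z}[Q]\to\mathbf{Z}[1/2])$, but this preimage is \emph{not} isomorphic to $M$ as a $\mathbf{Z}[Q]$-module: there is an exact sequence
\begin{equation*}
0\to \ker(\mathbf{Z}[Q]\to A_d)\to \ker(\mathbf{Z}[Q]\to\mathbf{Z}[1/2])\to M\to 0,
\end{equation*}
and the left-hand term is a nonzero prime of $\mathbf{Z}[Q]$ (indeed $\dim\mathbf{Z}[Q]=2d+1>d+1=\dim A_d$). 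Equivalently, $M$ is killed by $\ker(\mathbf{Z}[Q]\to A_d)$ while $\ker(\mathbf{Z}[Q]\to\mathbf{Z}[1/2])$ is torsion-free over $\mathbf{Z}[Q]$, so they cannot be isomorphic. The correct identification is $M\cong\ker(A_d\to\mathbf{Z}[1/2])$; that this agrees with what the displayed proposition literally says is debatable, but in any case the ``pulling back'' sentence does not give a proof of an isomorphism. Fortunately this does not contaminate the rest: the rank computation only uses that $M$ is a nonzero ideal (hence a rank-one torsion-free $A_d$-module) with faithful $Q$-action, which you verify correctly, and finite presentation only uses $\Gamma(M)\subset\Gamma(A_d)$, which is Lemma \ref{bigsubgroup}.
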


\subsection{Proofs}\label{cproofs}

The finite presentability of $H_d$ is a consequence of Corollary \ref{fpanneau}, using Bieri-Strebel's characterization of finitely presented metabelian groups (Theorem \ref{bist}). So its overgroups of finite index $\Gamma_d$ and $\Gamma'_d$ are also finitely presented. 
Finally Lemma \ref{bigsubgroup} implies that $\Lambda'_d$ is finitely presented, as well as its overgroup of finite index $\Lambda_d$.

All the groups $G=H_d,\Gamma_d,\Gamma'_d,\Lambda_d,\Lambda'_d$ arise in an extension
$$1\to M\to G\to R\to 1,$$
where $R$ contains $\mathbf{Z}^{2d}$ as a subgroup of finite index, and $M$ is an ideal in $A_d$. For $G=H_d,\Gamma_d,\Gamma'_d$, $M=A_d$; in the two last cases, $M\neq\{0\}$ because $[ue_1,uf_1]\neq 1$. As the Krull dimension of $A_d$ is $d+1$, by Corollary \ref{ideal}, $\ell'_Q(M)=\omega^d$ in all cases. Since $M$ satisfies the hypotheses of Corollary \ref{owncentralizer}, we obtain $\ell'(G)=\ell'_Q(M)=\omega^d$.

\section{Length and Cantor-Bendixson rank}\label{length}

\subsection{Length of noetherian modules}\label{lengthcr}

Let $A$ be a ring (not necessarily commutative). Recall that an $A$-module $M$ has {\it finite} length if there is an upper bound on the length $d$ of increasing chains $$0=M_0\subset M_1\subset\dots \subset M_d=M$$
of $A$-submodules of $M$, and the least bound is called the length of $M$. By a theorem of Jordan and H\"older, modules of finite length can be characterized as modules that are simultaneously noetherian (every increasing chain of submodules stabilizes) and artinian (every decreasing chain stabilizes). However in general, most noetherian modules have infinite length and it is natural to provide a notion of ordinal length. This was first done by Bass \cite{Bass} (in a commutative setting), using well-ordered decreasing chains of submodules. Then Gulliksen \cite{Gull} provided the inductive definition which follows, slightly less intuitive at first sight, but more handy to deal with. 

\begin{defn}
Define inductively, for every noetherian $A$-module, its ordinal length $\ell(M)=\ell_A(M)$ as
$$\ell(M)=\sup\{\ell(M/N)+1:N\textnormal{ nonzero $A$-submodule of $M$}\}.$$
\end{defn}

This has to be viewed as an inductive definition: the starting point is $\ell(\{0\})=\sup(\emptyset)=0$. In more generality, if $X$ is a noetherian partially ordered set, we can define an ordinal-valued function on $X$ by $\ell(x)=\sup\{\ell(y)+1:y>x\}$. The uniqueness of $\ell$ follows from noetherianity of $X$. For the existence, set $M_x=\{y:y\ge x\}$, consider the set $V$ of $u\in X$ such that there exists a function $\ell$ satisfying the inductive condition on $M_u$. If $V\neq X$, then its complement contains a maximal element $u$. So for any $x>u$, the number $\ell(x)$ is uniquely defined. Therefore the inductive definition shows that $\ell$ can be defined on $M_u$, contradicting that $u\notin V$. Here, the partially ordered set is the set of quotients of the module $M$.

If $\alpha$ is a non-zero ordinal, there exists a unique ordinal $\beta$ such that $\omega^\beta\le\alpha<\omega^{\beta+1}$, and we write $\beta=\deg(\alpha)$.

\begin{defn}
Let $M$ be a noetherian $A$-module. The {\it Krull dimension} of $M$ is defined as the ordinal $\deg(\ell(M))$ if $M\neq \{0\}$, and $-1$ if $M=\{0\}$.
\end{defn}
The more usual notion of Krull dimension, used in the non-commutative setting, is called the ``deviation of the poset of submodules of $M$" (see \cite[Chap.~6]{MR}). We do not need this definition, but it is important to mention that it is equivalent to the one given here \cite[Theorem~2.3]{Gull}. Moreover, when $A$ is commutative, it coincides \cite[Chap.~6.4]{MR} with the usual notion of Krull dimension defined in terms of chains of prime ideals, defined inductively as
$$\dim(M)=\sup\{\dim(A/\mathcal{P})+1:\mathcal{P}\textnormal{ non-minimal prime ideal of A/\textnormal{Ann}(M)}\}.$$

Let $A$ again be arbitrary (not necessarily commutative) and let $M$ be a noetherian $A$-module. If $\ell(M)=\omega^\alpha$ for some ordinal $\alpha$, we say that $M$ is {\it critical}, or $\alpha$-{\it critical}. This means that the Krull dimension of $M$ is $\alpha$, but the Krull dimension of any proper quotient of $M$ is $<\alpha$. A {\it critical series} for $M$ is a composition series
$$0=M_0\subset M_1\subset \dots\subset M_k=M,$$
where each $M_i/M_{i-1}$ is $\alpha_i$-critical and $$\alpha_1\le\dots\le\alpha_k.$$ A critical series always exists for $M$ \cite[6.2.20]{MR}, and in practice is easy to write down. It is a particular case of \cite[Theorem~2.1]{Gull} that we then have
$$\ell(M)=\omega^{\alpha_k}+\dots+\omega^{\alpha_1};$$
if $n_\alpha$ is the number of $i$ such that $\alpha_i=\alpha$, then the family of non-negative integers $(n_\alpha)$ is finitely supported, and we can rewrite this formula as
$$\ell(M)=\sum_\alpha \omega^\alpha\cdot n_\alpha\quad\textnormal{(sum in reverse order)}.$$

In particular, we have

\begin{prop}\label{elc}
Let $A$ be a commutative noetherian ring, $P$ a prime ideal such that $A/P$ has Krull dimension $\alpha\ge 1$. Let $M$ be a torsion-free $A/P$-module of rank~$r$. Then $\ell_A(M)=\omega^{d}\cdot r$.
\end{prop}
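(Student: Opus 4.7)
The plan is to reduce the problem to the case where the coefficient ring is $R := A/P$ itself (a commutative noetherian domain of Krull dimension $\alpha$), to establish that every nonzero ideal of $R$ is $\alpha$-critical of length $\omega^\alpha$, and then to build a critical series for $M$ whose $r$ successive quotients are each isomorphic to such an ideal. Since $P \subset \mathrm{Ann}_A(M)$, the $A$-submodules and the $R$-submodules of $M$ coincide, so $\ell_A(M) = \ell_R(M)$, which justifies the reduction.

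The first step is to show that $R$ itself is $\alpha$-critical, i.e. $\ell(R) = \omega^\alpha$. Applied to $R$, whose annihilator is $0$ and whose only minimal prime is $(0)$, the inductive formula for Krull dimension recalled in the excerpt reads
$$\alpha = \dim(R) = \sup\{\dim(R/\mathcal{P}) + 1 : \mathcal{P}\text{ nonzero prime of }R\},$$
so $\dim(R/\mathcal{P}) < \alpha$ for every nonzero prime $\mathcal{P}$. For any nonzero ideal $I \subset R$, every minimal prime over $I$ is nonzero, and the same formula applied to $R/I$ gives $\dim(R/I) < \alpha$, which (via $\dim = \deg \ell$) translates to $\ell(R/I) < \omega^\alpha$; this is exactly $\alpha$-criticality of $R$. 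The identical argument, together with the observation that any nonzero ideal $I$ contains $Rx \cong R$ for any $0\ne x\in I$ (so $\ell(I) \ge \ell(R) = \omega^\alpha$ by monotonicity of $\ell$ under submodules, itself a routine transfinite induction on $\ell$), upgrades this to: every nonzero ideal of $R$ has length exactly $\omega^\alpha$ and is $\alpha$-critical.

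Next I would produce the filtration. Let $K$ denote the fraction field of $R$; torsion-freeness yields an embedding $M \hookrightarrow M \otimes_R K \cong K^r$. Fixing a $K$-basis $e_1,\dots,e_r$ of $M \otimes K$, set
$$M_i = M \cap (K e_1 \oplus \cdots \oplus K e_i), \qquad 0 \le i \le r.$$
Projection onto the $e_i$-coordinate modulo the first $i-1$ axes is an $R$-linear map with kernel $M_{i-1}$, so $M_i/M_{i-1}$ embeds in $K$; it is a finitely generated (since $M$ is noetherian) nonzero $R$-submodule of $K$, which after clearing a common denominator is isomorphic to a nonzero ideal of $R$. By the preceding paragraph, each $M_i/M_{i-1}$ is $\alpha$-critical of length $\omega^\alpha$.

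The filtration $0 = M_0 \subset M_1 \subset \cdots \subset M_r = M$ is therefore a critical series with all factors of critical degree $\alpha$, and the sum formula from the excerpt gives
$$\ell_A(M) = \underbrace{\omega^\alpha + \cdots + \omega^\alpha}_{r\text{ terms}} = \omega^\alpha \cdot r,$$
as claimed. The main delicate point is the criticality step: one must upgrade the classical ``chain of primes drops after quotienting by a nonzero ideal'' into the strict ordinal decrease $\dim(R/I) < \alpha$, but the explicit inductive Krull-dimension formula cited from the excerpt, combined with the fact that every nonzero ideal is contained in some nonzero prime, handles this cleanly; once that is in hand the rest is bookkeeping via the critical-series formula.
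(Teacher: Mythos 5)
Your proof is correct and takes a genuinely different route from the paper's. The paper's argument is top-down: it invokes the abstract existence of a critical series for $M$ (from McConnell--Robson), observes $\alpha_i\le\alpha$ for all $i$ because $\dim M=\alpha$, uses torsion-freeness to pin $\alpha_1=\alpha$ and hence all $\alpha_i=\alpha$ by monotonicity, then argues that each $\alpha$-critical torsion-free subquotient has rank one, forcing the series to have exactly $r$ steps. You instead go bottom-up: you exhibit an explicit filtration $M_i=M\cap\bigoplus_{j\le i}Ke_j$ coming from the embedding $M\hookrightarrow K^r$ into the fraction field, identify each factor with a fractional ideal of $R=A/P$, and prove directly (via the inductive Krull-dimension formula and the $Rx\cong R$ trick) that every nonzero ideal of $R$ is $\alpha$-critical of length $\omega^\alpha$. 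What your version buys is that it avoids appealing to the general existence theorem for critical series --- you construct one by hand and only need the Gulliksen length formula for a given critical series --- and it isolates the clean lemma that every nonzero ideal in a Noetherian domain of dimension $\alpha$ is $\alpha$-critical. The paper's version is shorter because it delegates the structural work to the cited existence result. Two minor points you should tidy: when choosing the $K$-basis, take it inside $M$ (always possible since $M$ is torsion-free of rank $r$) so the filtration is genuinely strict with $r$ steps; and note that the statement's ``$\omega^{d}\cdot r$'' is a typo for $\omega^{\alpha}\cdot r$, which you implicitly corrected.
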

\begin{proof}
Let $(M_i)$ be a critical series as above. We have $\alpha_i\le\alpha$ for all $i$. Since $M$ is torsion-free over $A/P$, so is $M_1$, so we have $\alpha_1=\alpha$. Therefore $\alpha_i=d$ for all $i$; since $M_i/M_{i-1}$ is a subquotient of $M$, this implies that it is torsion-free; since it is critical, it is torsion-free of rank one. In particular, $k=r$. The formula above gives $\ell(M)=\omega^d\cdot r$.
\end{proof}

\subsection{General lengths}

Let $H$ be a semigroup. An $H$-group is by definition a group $G$ endowed with an action of $H$ by group endomorphisms. 

\begin{exe}If $A$ is a ring and $M$ is an $A$-module, then $M$ is an $A$-group, where $A$ is viewed as a multiplicative semigroup.
\end{exe}

An $H$-group satisfies $H$-max-n if every non-decreasing sequence of $H$-stable normal subgroups of $G$ stabilizes. When $G$ is an $H$-group and $N$ an $H$-stable normal subgroup of $G$, the group $N$ has natural action of both $H$ and $G$, hence of the semidirect product $G\rtimes H$, which we denote by $GH$ for short.

Let $G$ be an $H$-group satisfying $H$-max-n. We can define by induction the length of $G$ as
$$\ell_H(G)=\sup(\ell_H(G/N)+1),$$ where $G/N$ ranges over all proper $H$-quotients of $G$. Here of course we assume $\sup(\emptyset)=0$, which gives $\ell_H(\{1\})=0$. 

\begin{lem}
For any ordinal $\beta\le\ell(M)$, there exists a $H$-quotient $G/N$ of $G$ such that $\ell_H(G/N)=\beta$.\label{convex}
\end{lem}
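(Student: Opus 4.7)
The statement is a ``convexity'' property for the ordinal-valued length $\ell_H$: every ordinal below $\ell_H(G)$ is realized as the length of some $H$-quotient. (I read $\ell(M)$ in the statement as $\ell_H(G)$, matching the context.) My plan is a transfinite induction on $\alpha = \ell_H(G)$, exactly parallel to the inductive definition of $\ell_H$ itself. Note first that if $N$ is an $H$-stable normal subgroup of $G$, then $G/N$ inherits $H$-max-n, so $\ell_H(G/N)$ is well-defined and the induction makes sense.

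The base case is $\alpha = 0$, which forces $G = \{1\}$ (otherwise $G/G = \{1\}$ would be a proper $H$-quotient of nonzero length contribution, giving $\ell_H(G) \ge 1$). Then $\beta = 0$ and $N = \{1\}$ does the job. For the inductive step, assume the lemma holds for every $H$-group of length strictly less than $\alpha$, and let $\beta \le \alpha$. If $\beta = \alpha$, take $N = \{1\}$. If $\beta < \alpha$, I exploit the defining supremum
\[
\alpha \;=\; \sup\bigl\{\ell_H(G/N') + 1 : N' \text{ nontrivial $H$-stable normal subgroup of $G$}\bigr\}.
\]
Since $\beta < \alpha$, there exists such an $N'$ with $\ell_H(G/N') + 1 > \beta$, equivalently $\ell_H(G/N') \ge \beta$. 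On the other hand, $\ell_H(G/N') + 1 \le \alpha$, so $\ell_H(G/N') < \alpha$. Applying the inductive hypothesis to the $H$-group $G/N'$ and the ordinal $\beta \le \ell_H(G/N')$ yields an $H$-stable normal subgroup $N''/N'$ of $G/N'$ (with $N'' \supseteq N'$) such that $\ell_H(G/N'') = \beta$, which is what we want.

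The argument is essentially formal and follows the same pattern as the analogous ``intermediate value'' fact for noetherian modules; the only point of care is to verify that $\ell_H(G/N') < \alpha$ so that induction applies, which follows directly from the supremum being over terms of the form $\ell_H(G/N')+1$. No obstacle beyond this bookkeeping is expected; in particular, it is not necessary for the supremum to be attained, since the existence of a single $N'$ with $\ell_H(G/N') \ge \beta$ is guaranteed by the strict inequality $\beta < \alpha$ together with the definition of supremum.
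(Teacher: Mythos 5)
Your proof is correct, but it is organized differently from the paper's. The paper argues by minimal counterexample: it takes the smallest $\beta\le\ell_H(G)$ not realized as $\ell_H(G/N)$, then the smallest realized $\gamma>\beta$, and uses the defining supremum for $\gamma=\ell_H(G/N)$ to manufacture a further quotient $G/N'$ with $\beta\le\ell_H(G/N')<\gamma$; since $\beta$ is a counterexample, $\ell_H(G/N')>\beta$, which contradicts minimality of $\gamma$. You instead run a direct transfinite induction on $\alpha=\ell_H(G)$: from the supremum defining $\alpha$ you extract one $N'$ with $\beta\le\ell_H(G/N')<\alpha$ and then invoke the inductive hypothesis on the smaller-length $H$-group $G/N'$. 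Both arguments rest on exactly the same observation about the supremum (for any $\beta$ strictly below it, there is a term $\ge\beta$ and $<\alpha$), so neither is deeper than the other; the paper's version is a touch shorter since it avoids spelling out a base case, while yours is arguably more transparent, foregrounding the induction on length and making explicit that each $H$-quotient still satisfies $H$-max-n so the recursion is well-founded.
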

\begin{proof}
Let $\beta$ be the smallest counterexample, and $\gamma\le\alpha$ the smallest ordinal $>\beta$ such that $\ell_H(G/N)=\gamma$ for some $H$-quotient $G/N$. Then the definition of $\ell_H(G/N)$ leads to the existence of $N'$ such that $\beta\le\ell_H(G/N')<\gamma$, contradicting the definition of $\gamma$.
\end{proof}

Let $\mathcal{N}_H(G)$ denote the set of $H$-stable normal subgroups of $G$. Let $\cb_H(G)$ denote the Cantor-Bendixson rank of $\{1\}$ in $\mathcal{N}_H(G)$.

\begin{lem}
Let $G$ be an $H$-group satisfying $H$-max-n. Then $\mathcal{N}_H(G)$ is scattered and $\cb_H(G)\le\ell_H(G)$.\label{maxscat}
\end{lem}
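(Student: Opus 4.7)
The plan is to prove, by transfinite induction on $\alpha$, the stronger assertion
\[
N\in\mathcal{N}_H(G)^{(\alpha)}\;\Longrightarrow\;\ell_H(G/N)\ge\alpha.
\]
Applied to $N=\{1\}$ this gives $\cb_H(G)\le\ell_H(G)$ directly. Since $\ell_H(G/N)\le\ell_H(G)$ for every $N$ (which is immediate from the inductive definition of $\ell_H$), the same assertion yields $\mathcal{N}_H(G)^{(\alpha)}=\emptyset$ for $\alpha=\ell_H(G)+1$, proving scatteredness. So the two conclusions of the lemma are packaged together in a single induction.

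The base case $\alpha=0$ and the limit case are automatic. For the successor case $\alpha+1$, suppose $N\in\mathcal{N}_H(G)^{(\alpha+1)}$, and pick a net $(N_i)_{i\in I}$ in $\mathcal{N}_H(G)^{(\alpha)}\setminus\{N\}$ converging to $N$ in the Chabauty topology. The crucial step, where $H$-max-n enters, is to produce some $N_i$ strictly containing $N$. To this end I would consider the family of $H$-stable normal subgroups $K_j=\bigcap_{i\ge j}N_i$; it is directed under inclusion because $j\le j'$ implies $K_j\subseteq K_{j'}$. By $H$-max-n applied to this family (which is equivalent to every nonempty collection of $H$-stable normal subgroups having a maximal element, and a maximal element of a directed family is its maximum), there is an index $j_0$ with $K_j\subseteq K_{j_0}$ for all $j$. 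Unwinding Chabauty convergence, $\bigcup_j K_j=\{x\in G:x\in N_i\text{ eventually}\}=N$, so $K_{j_0}=N$. Hence $N_i\supseteq N$ for all $i\ge j_0$, and since $N_i\neq N$ we get $N_i\supsetneq N$. The induction hypothesis gives $\ell_H(G/N_i)\ge\alpha$, and $G/N\twoheadrightarrow G/N_i$ is a proper $H$-quotient, so the inductive definition of $\ell_H$ yields $\ell_H(G/N)\ge\alpha+1$, completing the successor step.

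The main point requiring care is the Chabauty identity $\bigcup_j\bigcap_{i\ge j}N_i=N$, together with the observation that $H$-max-n handles directed (not just linearly ordered) families of $H$-stable normal subgroups; once these are in hand the transfinite induction runs straightforwardly. I do not expect any serious obstacle beyond this; in particular, if $G$ is countable one could work with sequences and avoid nets entirely, but the net argument above costs nothing extra and covers the general case.
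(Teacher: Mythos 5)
Your argument is correct, and it implements the same idea the paper invokes, just reorganized: you induct on the derived-set ordinal $\alpha$ and prove $N\in\mathcal{N}_H(G)^{(\alpha)}\Rightarrow\ell_H(G/N)\ge\alpha$, whereas the paper phrases it as a (straightforward) induction on $\ell_H(G)$. The key technical content is identical in either version, namely that $H$-max-n forces a net converging to $N$ to lie eventually above $N$ (equivalently, that $\{N':N'\supseteq N\}$ is open), which your directed-family/$K_j$ argument establishes cleanly.
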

\begin{proof}
This is a straightforward induction on $\ell_H(G)$.
\end{proof}

However this result is not optimal in general (see Proposition \ref{cblp}).

If $\alpha,\beta$ are ordinals, their natural sum is define inductively as follows \cite[XIV.28]{Sie}
$$\alpha\oplus\beta=\max\left(\sup_{\gamma<\alpha}((\gamma\oplus\beta)+1),\sup_{\gamma<\alpha}((\alpha\oplus\gamma)+1)\right),$$with $\sup\emptyset=0$.
Recall that any ordinal $\alpha$ has a unique {\it Cantor form} \cite[XIV.19]{Sie}
$$\alpha=\sum_\gamma \omega^\gamma\cdot n_\gamma,$$
where the sum is indexed by $\gamma$ ranging over the ordinal in decreasing order, and $(n_\gamma)$ is a finitely supported family of non-negative integers. If $\beta=\sum\omega^\gamma\cdot n'_\gamma$ is also in Cantor form, then their natural sum is
$$\alpha\oplus\beta=\sum_\gamma\omega^\gamma\cdot(n_\gamma+n'_\gamma).$$

The following lemma generalizes Theorem 2.1 and Proposition 2.11 in \cite{Gull}.

\begin{lem}
Let $G$ be an $H$-group with $H$-max-n, in an exact sequence of $H$-groups $$1\to M\to G\to Q\to 1.$$ Then $$\ell_H(Q)+\ell_{GH}(M)\;\le\;\ell_H(G)\;\le\;\ell_{H}(Q)\oplus\ell_{GH}(M).$$
When $G=M\times Q$ as an $H$-group, $$\ell_H(G)=\ell_H(Q)\oplus\ell_{GH}(M).$$\label{ext}
\end{lem}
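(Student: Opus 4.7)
The plan is to prove the three assertions in sequence. For the lower bound $\ell_H(Q)+\ell_{GH}(M)\le\ell_H(G)$, I would induct on $\alpha=\ell_{GH}(M)$. When $\alpha=0$ then $M=\{1\}$ and $G=Q$, making the statement trivial. For the inductive step, given any $\beta<\alpha$, apply Lemma \ref{convex} to the $GH$-group $M$ to obtain a $GH$-stable normal subgroup $N$ of $M$ with $\ell_{GH}(M/N)=\beta$; since being $GH$-stable in $M$ amounts to being $H$-stable and normal in $G$, this $N$ is in particular a nontrivial $H$-stable normal subgroup of $G$. The induced exact sequence $1\to M/N\to G/N\to Q\to 1$ satisfies the inductive hypothesis, giving $\ell_H(G/N)\ge\ell_H(Q)+\beta$ and hence $\ell_H(G)\ge\ell_H(Q)+\beta+1$; taking the supremum over $\beta<\alpha$ yields $\ell_H(G)\ge\ell_H(Q)+\alpha$.

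For the upper bound $\ell_H(G)\le\ell_H(Q)\oplus\ell_{GH}(M)$, I would induct on $\kappa=\ell_H(Q)\oplus\ell_{GH}(M)$, showing $\ell_H(G/N)+1\le\kappa$ for every nontrivial $H$-stable normal subgroup $N$ of $G$. The quotient $G/N$ sits in $1\to M/(M\cap N)\to G/N\to Q/(NM/M)\to 1$, and since $[N,M]\subset M\cap N$ (because $M,N\trianglelefteq G$), the subgroup $N$ acts trivially on $M/(M\cap N)$; hence the $(G/N)H$-stable subgroups of $M/(M\cap N)$ coincide with the $GH$-stable ones, and the two lengths agree. Split into cases: if $N\subset M$, then $\ell_{GH}(M/N)+1\le\ell_{GH}(M)$; otherwise $NM/M$ is a nontrivial $H$-stable normal subgroup of $Q$, giving $\ell_H(Q/(NM/M))+1\le\ell_H(Q)$. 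In each case, the elementary identity $(\alpha+1)\oplus\beta=(\alpha\oplus\beta)+1$ (read off Cantor normal form), combined with monotonicity of $\oplus$ and the inductive hypothesis applied to $G/N$, delivers the desired bound.

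For the direct-product equality, only $\ell_H(M\times Q)\ge\ell_H(Q)\oplus\ell_{GH}(M)$ remains. Here the $G$-action on $M$ is by inner automorphisms of $M$ with $Q$ acting trivially, so $GH$-stability of a subgroup of $M$ reduces to $H$-stability and normality in $M$, independent of which $Q$-factor one quotients by. For any nontrivial $H$-stable normal subgroup $N_Q$ of $Q$, the subgroup $N=\{1\}\times N_Q$ is $H$-stable normal in $G$ and $G/N\cong M\times(Q/N_Q)$, to which induction on $\kappa$ applies and yields $\ell_H(G/N)\ge\ell_H(Q/N_Q)\oplus\ell_{GH}(M)$; symmetrically for subgroups of the form $N_M\times\{1\}$. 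Sweeping $\gamma<\ell_H(Q)$ and $\delta<\ell_{GH}(M)$ and taking the supremum recovers $\ell_H(Q)\oplus\ell_{GH}(M)$ from its inductive definition. The most delicate step is the upper bound in the mixed case $N\not\subset M$, where both $M\cap N$ and the image of $N$ in $Q$ contribute nontrivially to the quotient; once the successor identity $(\alpha+1)\oplus\beta=(\alpha\oplus\beta)+1$ is identified as the bridge between the recursive definitions of $\oplus$ and $\ell$, both cases collapse into a single pattern.
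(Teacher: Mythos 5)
Your proof is correct and follows essentially the same strategy as the paper's: a transfinite induction that reduces to a proper $H$-quotient $G/N$, with a case split according to how $N$ sits relative to $M$, using the recursive definition of $\oplus$ (via $(\alpha\oplus\beta)+1=(\alpha+1)\oplus\beta$) to close the induction. The paper inducts on $\ell_H(G)$ and splits on whether $N\cap M$ is trivial, whereas you induct on $\ell_H(Q)\oplus\ell_{GH}(M)$ and split on whether $N\subset M$; these are equivalent bookkeeping choices and your version fills in the details the paper leaves to the reader.
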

\begin{proof}
All facts are directly obtained by induction on $\ell_H(G)$. For instance, let us check that $\ell_H(G)\le\ell_{H}(Q)\oplus\ell_{GH}(M)$. If $N$ is an $H$-invariant normal subgroup of $G$, then if $N\cap M$ is non-trivial, by induction $$\ell_H(G/(N\cap M))\le \ell_{H}(Q)\oplus\ell_{GH}(M/(N\cap M))<\ell_{H}(Q)\oplus\ell_{GH}(M)$$ by definition of the natural sum. If $N\cap M=\{1\}$ and the projection $p(N)$ of $N$ on $Q$ is non-trivial, then by induction $$\ell_H(G/N)\le \ell_{H}(Q/p(N))\oplus\ell_{GH}(M)<\ell_{H}(Q)\oplus\ell_{GH}(M)$$ again by definition of the natural sum. In all cases, we get $\ell_H(G/N)<\ell_{H}(Q)\oplus\ell_{GH}(M)$, so passing to the supremum we get $\ell_H(G)\le\ell_{H}(Q)\oplus\ell_{GH}(M)$.
\end{proof}

\subsection{Reduced length}\label{rl}

There is a well-defined notion of left Euclidean division for ordinals. In particular, if $\alpha$ is an ordinal, it is easy to check that there is a unique ordinal $\alpha'$ such that $\alpha=\omega\cdot\alpha'+r$ with $r<\omega$. For instance, $1'=0$, $(\omega^{n+1})'=\omega^n$ for $n<\omega$ and $(\omega^\alpha)'=\omega^\alpha$ for $\alpha\ge\omega$.

\begin{defn}
Let $G$ be a $H$-group satisfying $H$-max-n. Define $\ell'_H(G)$ as $(\ell_H(M))'$. 
\end{defn}

\begin{prop}
We have
$$\ell'_H(G)=\sup\{\ell'_H(G/N)+1\},$$ where $G/N$ ranges over all $H$-quotients of $G$ with $\ell_{GH}(N)\ge\omega$.
\end{prop}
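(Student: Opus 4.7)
The plan is to write $\ell_H(G) = \omega\cdot\alpha + r$ with $r<\omega$, so $\ell'_H(G) = \alpha$ by definition, and then prove both inequalities of the asserted equality using the extension bounds of Lemma \ref{ext} together with the convexity statement of Lemma \ref{convex}.

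For the inequality $\sup\le\ell'_H(G)$, I would fix any $H$-quotient $G/N$ with $\ell_{GH}(N)\ge\omega$ and write $\ell_H(G/N) = \omega\cdot\beta + s$ with $s<\omega$. The lower bound in Lemma \ref{ext} gives
\[ \omega\cdot(\beta+1) \;=\; \omega\cdot\beta + s + \omega \;\le\; \ell_H(G/N) + \ell_{GH}(N) \;\le\; \omega\cdot\alpha + r, \]
which forces $\beta+1\le\alpha$, hence $\ell'_H(G/N)+1 = \beta+1\le\alpha = \ell'_H(G)$.

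For the reverse inequality, the case $\alpha = 0$ is immediate: if any $N$ with $\ell_{GH}(N)\ge\omega$ existed, Lemma \ref{ext} would force $\ell_H(G)\ge\omega$, so the supremum runs over the empty set and equals $0 = \ell'_H(G)$. Assuming $\alpha > 0$, for every $\gamma<\alpha$ we have $\omega\gamma\le\omega\alpha\le\ell_H(G)$, so Lemma \ref{convex} produces an $H$-quotient $G/N$ with $\ell_H(G/N) = \omega\gamma$, giving $\ell'_H(G/N) = \gamma$. I would then verify $\ell_{GH}(N)\ge\omega$ by contradiction: if $\ell_{GH}(N) = n < \omega$, then the upper bound in Lemma \ref{ext} combined with the Cantor-form identity $\omega\gamma\oplus n = \omega\gamma + n$ yields
\[ \ell_H(G) \;\le\; \omega\gamma + n \;<\; \omega(\gamma+1) \;\le\; \omega\alpha \;\le\; \ell_H(G), \]
a contradiction; the penultimate inequality uses $\gamma+1\le\alpha$, which holds whether $\alpha$ is a successor or a limit. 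Thus $\ell'_H(G/N)+1 = \gamma+1$ appears in the supremum, and taking the sup as $\gamma$ ranges over ordinals $<\alpha$ recovers $\alpha$ (attained at $\gamma=\alpha-1$ in the successor case, and as a supremum in the limit case).

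There is no real obstacle: the argument is essentially a careful tracking of Cantor forms against the two inequalities of Lemma \ref{ext}. The only minor points are handling the empty-supremum case $\alpha=0$ separately and recalling that $\omega\gamma\oplus n$ collapses to $\omega\gamma + n$ when $n<\omega$, which is immediate from the Cantor-form description recalled in Section \ref{rl}.
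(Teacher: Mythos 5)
Your proof is correct, and it uses the same two key ingredients the paper uses, namely the extension bounds of Lemma~\ref{ext} and the convexity Lemma~\ref{convex}, together with elementary ordinal arithmetic on Cantor forms. The presentational difference is that the paper introduces an auxiliary quantity $\ell''_H$ defined by the inductive formula $\ell''_H(G)=\sup\{\ell''_H(G/N)+1\}$ (ranging over $N$ with $\ell_{GH}(N)\ge\omega$) and proves $\ell'_H=\ell''_H$ by induction on $\ell_H(G)$; the induction step is only needed to replace $\ell'_H(G/N)$ by $\ell''_H(G/N)$ on the subquotients. You sidestep this entirely by exploiting the direct definition $\ell'_H(G)=(\ell_H(G))'$ via left Euclidean division by $\omega$, which turns both inequalities into straightforward Cantor-form manipulations with no recursion. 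Your version is slightly cleaner for the equality as stated; the paper's version has the small side benefit of exhibiting $\ell'_H$ as itself an inductively defined length, matching the general framework of the section, but that extra structure is not required to prove the proposition. Two minor points you handled correctly and the paper leaves implicit: the empty-supremum case when $\ell_H(G)<\omega$, and the collapse $\omega\gamma\oplus n=\omega\gamma+n$ for $n<\omega$.
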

\begin{proof}
Define $\ell''_H(G)$ by the inductive formula
$$\ell''_H(G)=\sup\{\ell''_H(G/N)+1\}.$$ Let us prove by induction on $\alpha=\ell_H(G)$ that $\ell'_H(G)=\ell''_H(G)$.
Let $N$ be a normal $H$-subgroup of $G$ with $\ell_{GH}(N)\ge\omega$. By Lemma \ref{ext}, we have 
$$\ell_H(G/N)+\ell_{GH}(N)\le\ell_H(G).$$
So 
$$\omega\cdot (\ell'_H(G/N)+1)\le\omega\cdot\ell'_H(G);$$
since we can ``simplify" by $\omega$ on the left, this gives, using the induction hypothesis
$$\ell''_H(G/N)+1\le\ell'_H(G);$$
taking the supremum over $N$, we get $\ell''_H(G)\le\ell'_H(G)$.

Conversely, take $\alpha<\ell'_H(G)$. So $\omega\cdot\alpha+\omega\le\ell_H(G)$. By Lemma \ref{convex}, there exists a normal $H$-subgroup $N$ of $G$ with $\ell_H(G/N)=\omega\cdot\alpha$. In particular, $\ell'_H(G/N)=\alpha$, so by induction hypothesis, $\ell''_H(G/N)=\alpha$. If we had $\ell_{GH}(N)<\omega$ then Lemma \ref{ext} would imply $\ell_H(G)<\omega\cdot\alpha+\omega$, a contradiction. Therefore $\ell''_H(G)>\alpha$. Since this holds for any $\alpha<\ell'_H(G)$, we deduce $\ell'_H(G)\le\ell''_H(G)$.
\end{proof}

\begin{lem}
Let $G$ be an $H$-group with $H$-max-n, in an exact sequence of $H$-groups $$1\to M\to G\to Q\to 1.$$ Then $$\ell'_H(Q)+\ell'_{GH}(M)\;\le\;\ell'_H(G)\;\le\;\ell'_{H}(Q)\oplus\ell'_{GH}(M).$$
When $G=M\times Q$ as an $H$-group, $$\ell'_H(G)=\ell'_H(Q)\oplus\ell'_{GH}(M).$$\label{extp}
\end{lem}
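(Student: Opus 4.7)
The plan is to deduce Lemma \ref{extp} directly from Lemma \ref{ext} by applying the operation $\rho \colon \alpha \mapsto \alpha'$ of left Euclidean division by $\omega$ (paragraph \ref{rl}). What is needed is monotonicity of $\rho$ together with two identities: $(\alpha+\beta)'=\alpha'+\beta'$ and $(\alpha\oplus\beta)'=\alpha'\oplus\beta'$. Monotonicity is immediate from the uniqueness of Euclidean division: if $\alpha\le\beta$ but $\alpha'\ge\beta'+1$, then $\alpha\ge\omega\alpha'\ge\omega\beta'+\omega>\omega\beta'+s=\beta$, a contradiction.

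For the ordinary-sum identity, write $\alpha=\omega\alpha'+r$ and $\beta=\omega\beta'+s$ with $r,s<\omega$. If $\beta'=0$ then $\beta=s$ and $\alpha+\beta=\omega\alpha'+(r+s)$. If $\beta'\ge 1$ then $\omega\beta'$ is a limit ordinal that absorbs the finite tail $r$ of $\alpha$, so $\alpha+\beta=\omega\alpha'+\omega\beta'+s=\omega(\alpha'+\beta')+s$ by left-distributivity of ordinal multiplication. In either case the remainder is $<\omega$, and uniqueness of Euclidean division gives $(\alpha+\beta)'=\alpha'+\beta'$.

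For the natural-sum identity, expand in Cantor normal form. If $\alpha=\sum_\gamma\omega^\gamma n_\gamma$ and $\beta=\sum_\gamma\omega^\gamma n'_\gamma$, then $\alpha\oplus\beta=\sum_\gamma\omega^\gamma(n_\gamma+n'_\gamma)$, whose constant coefficient is $r+s<\omega$ and whose higher part equals $(\omega\alpha')\oplus(\omega\beta')$. The crux is the identity $(\omega\alpha')\oplus(\omega\beta')=\omega(\alpha'\oplus\beta')$: since $\omega\cdot\omega^\gamma=\omega^{1+\gamma}$, left multiplication by $\omega$ reindexes the exponents of a Cantor form via the strictly increasing injection $T(\gamma)=\gamma+1$ for $\gamma$ finite and $T(\gamma)=\gamma$ for $\gamma$ infinite, and this reindexing commutes with the coefficient-wise operation defining $\oplus$. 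Combined with the injectivity of $\gamma\mapsto\omega\gamma$, uniqueness of Euclidean division then yields $(\alpha\oplus\beta)'=\alpha'\oplus\beta'$.

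Applying the monotone $\rho$ to the chain $\ell_H(Q)+\ell_{GH}(M)\le\ell_H(G)\le\ell_H(Q)\oplus\ell_{GH}(M)$ of Lemma \ref{ext} and using the two identities yields the desired inequalities of Lemma \ref{extp}; the split-product equality transfers in the same way. The main delicate point is the $\omega$-distributivity over natural sum used in the third paragraph, but this reduces to routine Cantor-form bookkeeping.
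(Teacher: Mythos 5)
Your proof is correct and follows the same route as the paper's one-line argument: apply the operation $\rho\colon\alpha\mapsto\alpha'$ to the three terms of Lemma \ref{ext}, using that $\rho$ is monotone and commutes with both ordinal sum and natural sum. The paper states these commutation facts without verification, and you have correctly supplied the routine Cantor-normal-form bookkeeping that justifies them.
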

\begin{proof}
Since the sum and natural sum commute with $\alpha\mapsto\alpha'$, this immediately follows from Lemma \ref{ext}.
\end{proof}

\begin{lem}
Let $G$ be an $H$-group satisfying $H$-max-n. Then for every $H$-stable normal subgroup $F$ of $G$ with $\ell_{GH}(F)<\omega$, we have $\ell'_{H}(G/F)=\ell'_H(G)$.\label{quotfini}
\end{lem}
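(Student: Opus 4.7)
The plan is to derive this immediately from Lemma \ref{extp} applied to the exact sequence
\[
1 \to F \to G \to G/F \to 1,
\]
together with the elementary fact that the operation $\alpha \mapsto \alpha'$ (left Euclidean division by $\omega$) sends any finite ordinal to $0$.

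First I would observe that since $\ell_{GH}(F)<\omega$ by hypothesis, writing $\ell_{GH}(F) = \omega\cdot 0 + \ell_{GH}(F)$ is the Euclidean division by $\omega$, so by the definition from Section \ref{rl} we have $\ell'_{GH}(F)=0$. Next I would invoke Lemma \ref{extp} on the above extension (with $M=F$ and $Q=G/F$), which yields the chain of inequalities
\[
\ell'_H(G/F) + \ell'_{GH}(F) \;\le\; \ell'_H(G) \;\le\; \ell'_H(G/F) \oplus \ell'_{GH}(F).
\]
Substituting $\ell'_{GH}(F)=0$ and using that both ordinary sum and natural sum are neutral on the right with respect to $0$, the displayed inequality collapses to
\[
\ell'_H(G/F) \;\le\; \ell'_H(G) \;\le\; \ell'_H(G/F),
\]
which is the desired equality.

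I expect no genuine obstacle: once Lemma \ref{extp} is in hand, the only thing to check is that finite $\ell_{GH}$-length entails vanishing reduced length, which is built into the definition of $\alpha'$. The only mild care needed is to confirm that $F$ is indeed an $H$-group satisfying $H$-max-n so that $\ell_{GH}(F)$ and $\ell'_{GH}(F)$ are defined; but $F$ inherits $GH$-max-n from the fact that any chain of $GH$-invariant normal subgroups of $F$ is in particular a chain of $H$-invariant normal subgroups of $G$ contained in $F$, which must stabilize by $H$-max-n of $G$.
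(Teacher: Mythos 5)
Your proof is correct and is exactly the paper's argument: observe that $\ell_{GH}(F)<\omega$ forces $\ell'_{GH}(F)=0$ by definition of the reduced length, then apply Lemma~\ref{extp} to the extension $1\to F\to G\to G/F\to 1$ and note that both bounds collapse to $\ell'_H(G/F)$. The paper compresses this into two sentences; you have merely spelled out the same steps.
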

\begin{proof}
By definition of $\ell'_H$, we have $\ell'_{GH}(F)=0$. So this follows readily from Lemma~\ref{extp}.
\end{proof}

\begin{lem}
Let $G$ be an $H$-group satisfying $H$-max-n. Suppose that $G$ is residually finite as an $H$-group. Then
$$\ell_H(G)<\omega\Leftrightarrow\ell'_H(G)=0\Leftrightarrow G\textnormal{ is finite.}$$\label{finite}
\end{lem}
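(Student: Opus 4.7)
My plan is to break the statement into three implications. The equivalence $\ell_H(G)<\omega\Leftrightarrow\ell'_H(G)=0$ is a direct consequence of the definition of $\ell'_H$ via left Euclidean division: writing $\ell_H(G)=\omega\cdot\alpha+r$ with $r<\omega$, one has $\ell'_H(G)=\alpha$, which vanishes exactly when $\ell_H(G)$ is finite. No hypothesis on residual finiteness or $H$-max-n enters here.

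To prove that $G$ finite implies $\ell_H(G)<\omega$, I would argue by strong induction on $|G|$: the case $|G|=1$ gives $\ell_H(G)=0$, and if $G$ is finite with $|G|>1$, then every nontrivial $H$-stable normal subgroup $N$ produces a quotient $G/N$ of strictly smaller order, hence of finite $\ell_H$ by the inductive hypothesis. Since such $N$ form a finite set, the supremum defining $\ell_H(G)$ is a maximum over a finite collection of integers and is finite. Residual finiteness is not used for this direction either.

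The substantive direction is $\ell_H(G)<\omega\Rightarrow G$ finite. The first step is to upgrade $H$-max-n to the descending chain condition on the poset $\mathcal{N}_H(G)$ of $H$-stable normal subgroups: any strictly descending chain $N_1\supsetneq N_2\supsetneq\cdots$ exhibits $G/N_{i-1}$ as a proper $H$-quotient of $G/N_i$ (with nontrivial kernel $N_{i-1}/N_i$), whence $\ell_H(G/N_{i-1})<\ell_H(G/N_i)\le\ell_H(G)$ by the inductive definition of $\ell_H$, producing an infinite strictly increasing sequence of natural numbers, a contradiction. The second step is to let $\mathcal{F}$ denote the collection of $H$-stable normal subgroups of $G$ of finite index, which has trivial intersection by residual finiteness as an $H$-group. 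As $\mathcal{F}$ is closed under finite intersections and descending chains in $\mathcal{F}$ stabilize by the previous step, $\mathcal{F}$ admits a minimal element $M$; for every $N\in\mathcal{F}$, $M\cap N\in\mathcal{F}$ with $M\cap N\subseteq M$, so minimality forces $M\subseteq N$. Therefore $M\subseteq\bigcap\mathcal{F}=\{1\}$, and since $M$ has finite index in $G$, the group $G$ is finite.

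I expect the only mildly delicate point to be the artinian step, i.e.\ recognizing that finiteness of $\ell_H$ encodes a two-sided chain condition on $\mathcal{N}_H(G)$; once this is available, the rest is a standard argument pairing the descending chain condition with residual finiteness. It is worth noting that residual finiteness genuinely cannot be dropped, since an infinite $H$-simple group would satisfy $\ell_H(G)=1<\omega$ while remaining infinite.
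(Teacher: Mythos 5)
Your proof is correct and rests on the same core mechanism as the paper's: residual finiteness produces chains of $H$-stable finite-index normal subgroups, and a strictly decreasing such chain forces a strictly increasing sequence of $\ell_H$-values, which is incompatible with $\ell_H(G)<\omega$. The paper runs this in one pass (given $G$ infinite, build the chain and conclude $\ell_H(G)\ge\omega$), whereas you first isolate the descending chain condition on $\mathcal{N}_H(G)$ as an intermediate step and then pair it with residual finiteness via a minimal element of $\mathcal{F}$ — a slightly longer but more modular packaging of the same idea.
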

\begin{proof}
The left-hand equivalence is true by definition of $\ell'_H$, without assuming residual finiteness. Clearly $G$ finite implies $\ell_H(G)<\infty$. Conversely if $G$ is infinite,
it has a decreasing sequence $(M_n)$ of $H$-stable (finite index) normal subgroups, the sequence $\ell_H(G/M_n)$ is increasing and $\ell_H(M)\ge\omega$.
\end{proof}

\begin{defn}Using Lemma \ref{ext}, we can define, for every $H$-group $G$ with $H$-max-n, $E_H(G)$, [resp. $W_H(G)$], as its unique largest normal $H$-invariant subgroup $N$ with $\ell_H(N)<\omega$ [resp. $\ell'_H(N)<\omega$, i.e.~$\ell_H(N)<\omega^2$]. It follows from Lemmas \ref{ext}and \ref{extp} that $E_H(G/E_H(G))=\{1\}$ and $W_H(G/W_H(G))=\{1\}$.\end{defn}

\begin{prop}\label{cblp}
Let $G$ be an $H$-group satisfying $H$-max-n. Suppose that $G$ is residually finite as an $H$-group, as well as all its $H$-quotients. Then $\cb_H(G)=\ell'_H(G)$.
\end{prop}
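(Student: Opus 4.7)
The plan is to prove both inequalities simultaneously by transfinite induction on $\ell_H(G)$, the base case $\ell_H(G)=0$ forcing $G=\{1\}$ (by residual finiteness) so that both invariants vanish. The cornerstone will be a \emph{key formula}: $\CB_{\mathcal{N}_H(G)}(N)=\cb_H(G/N)$ for every $N\in\mathcal{N}_H(G)$. To establish it I note that $H$-max-n forces every $N\in\mathcal{N}_H(G)$ to be finitely generated as an $H$-stable normal subgroup (a standard ACC argument: successively enlarging a finitely $H$-normally generated proper sub-subgroup would produce an infinite strictly increasing chain). Taking a finite $H$-normal generating set $\{g_1,\dots,g_k\}$ of $N$, the upper cone $Y_N:=\{N'\in\mathcal{N}_H(G):N'\supseteq N\}$ equals $\bigcap_{i=1}^{k}\{N':g_i\in N'\}$, a finite intersection of clopens, hence clopen. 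The quotient map identifies $Y_N$ with $\mathcal{N}_H(G/N)$ as Chabauty spaces (sending $N\mapsto\{1\}_{G/N}$), and since $Y_N$ is an open neighborhood of $N$, the CB rank is preserved.

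For the upper bound $\cb_H(G)\le\alpha:=\ell'_H(G)$ I would exhibit a neighborhood of $\{1\}$ containing, besides $\{1\}$ itself, only points of CB rank $<\alpha$. $H$-max-n combined with directedness of the family of finite $H$-stable normal subgroups (their products remain finite and $H$-stable normal) yields a unique largest such subgroup $E$. The basic Chabauty neighborhood $W=\{N:N\cap E=\{1\}\}$ of $\{1\}$ contains no finite $N\ne\{1\}$, because any finite $N\in W$ would give a finite $H$-stable normal $NE$, hence $NE\subseteq E$ by maximality, forcing $N\subseteq E$ and then $N=N\cap E=\{1\}$. Thus every non-trivial $N\in W$ is infinite, so $\ell_{GH}(N)\ge\omega$ by Lemma \ref{finite}, and the inductive description of $\ell'_H$ forces $\ell'_H(G/N)<\alpha$; the inductive hypothesis at $G/N$ (valid since $\ell_H(G/N)<\ell_H(G)$ when $N\ne\{1\}$) gives $\cb_H(G/N)=\ell'_H(G/N)<\alpha$, and the key formula yields $\CB(N)<\alpha$.

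For the lower bound $\cb_H(G)\ge\alpha$ I would fix $\beta<\alpha$ and any finite $F\subseteq G$. The inductive description of $\ell'_H$ provides $N_0\in\mathcal{N}_H(G)$ with $\ell_{GH}(N_0)\ge\omega$ and $\ell'_H(G/N_0)\ge\beta$, while residual finiteness furnishes a finite-index $H$-stable normal $U$ with $U\cap F=\{1\}$. The intersection $N:=N_0\cap U$ is then a non-trivial $H$-stable normal subgroup lying in the neighborhood of $\{1\}$ prescribed by $F$, and Lemma \ref{ext} applied to $1\to N_0/N\to G/N\to G/N_0\to 1$ gives $\ell_H(G/N)\ge\ell_H(G/N_0)\ge\omega\cdot\beta$, hence $\ell'_H(G/N)\ge\beta$. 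Induction and the key formula yield $\CB(N)=\cb_H(G/N)\ge\beta$. Varying $F$ shows $\{1\}$ is an accumulation point of $\mathcal{N}_H(G)^{(\beta)}$ for every $\beta<\alpha$, which places $\{1\}\in\mathcal{N}_H(G)^{(\alpha)}$ in both the successor and limit cases.

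The main obstacle is the key formula, and inside it the upgrade of $Y_N$ from merely closed to clopen, where $H$-max-n is essential; once this is in place, both directions of the induction close up cleanly, provided one also verifies (routinely) that residual finiteness of $G$ and its $H$-quotients propagates to the $H$-groups (respectively $GH$-groups) appearing in the induction.
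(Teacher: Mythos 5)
Your proof is correct and follows essentially the same strategy as the paper's: transfinite induction on $\ell_H(G)$, the upper bound via the finite subgroup $E_H(G)$ and neighborhoods $\{N:N\cap E_H(G)=\{1\}\}$, and the lower bound via residual finiteness intersecting a quotient-witness $N_0$ with a small finite-index $H$-stable normal subgroup. The one genuine improvement is that you state and prove the key identification $\CB_{\mathcal{N}_H(G)}(N)=\cb_H(G/N)$, via the observation that $H$-max-n makes the upper cone $Y_N$ clopen and homeomorphic to $\mathcal{N}_H(G/N)$; the paper uses this identification silently (both in concluding $\cb_H(G)\le\ell'_H(G/N)+1$ and in asserting "every neighbourhood of $\{1\}$ contains an element of Cantor-Bendixson rank $\beta$"), so your explicit treatment is a worthwhile clarification rather than a different route.
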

\begin{proof}
Consider a counterexample $G$ with $\ell_H(G)=\alpha$ and assume that the lemma is proved for every $H$-group of $\ell_H<\alpha$.

By Lemma \ref{finite}, $M=E_H(G)$ is finite. Therefore if $N$ is close enough to $\{1\}$ we have $N\cap M=\{1\}$; if $N\neq\{1\}$ this forces $N$ to be infinite. In this case, by induction $\cb_H(G/N)=\ell'_H(G/N)<\ell'_H(G)$. Accordingly, $\cb_H(G)\le\ell'_H(G/N)+1$, so $\cb_H(G)\le\ell'_H(G)$.

Conversely, consider any ordinal $\beta<\alpha$ and a finite subset $I$ of $G-\{1\}$. Then $G$ has a proper quotient $G/N$ with $\ell_{GH}(N)\ge\omega$ (so $N$ is infinite) and $\ell'_H(G/N)=\beta$, and has an $H$-stable normal finite index subgroup $L$ with $L\cap I=\emptyset$; necessarily $N\cap F\neq\{1\}$.
As $L/(L\cap N)$ is finite, we have $\ell'_{GH}(L/(L\cap N))<\omega$. So by Lemma \ref{quotfini} $\ell'_H(G/(L\cap N))=\ell'_H(G/L)=\beta$ and $\cb_H(G/(L\cap N))=\beta$ again by induction. Thus every neighbourhood of $\{1\}$ in $\mathcal{N}_H(G)$ contains an element of Cantor-Bendixson $\beta$. As this holds for every $\beta<\alpha$, we get $\cb_H(G)\ge\alpha=\ell'_H(G)$. 
\end{proof}

Examples with $\ell'_H(G)<\cb_H(G)<\ell_H(G)$ were obtained in \cite[Lemma~15]{Cor}.

\begin{cor}\label{ccblp}
Under the same assumptions, $\CB(\mathcal{N}_H(G))=\cb_H(G)+1$.
\end{cor}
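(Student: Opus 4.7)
The plan is to compute $\CB(\mathcal{N}_H(G))$ by evaluating the Cantor-Bendixson rank at each point and then taking the supremum, reducing at each point to an instance of Proposition \ref{cblp}.

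First, since $G$ satisfies $H$-max-n, Lemma \ref{maxscat} tells us that $X = \mathcal{N}_H(G)$ is scattered, so (as noted in the introduction) $\CB(X) = \sup_{N \in X}(\CB_X(N) + 1)$. Hence it suffices to show that $\CB_X(N) = \ell'_H(G/N)$ for every $N \in X$, and then check that the supremum is attained at $N = \{1\}$.

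Second, I would establish the point-wise equality by localizing: by $H$-max-n, $N$ is generated as an $H$-stable normal subgroup of $G$ by some finite $F \subseteq N$, so the set $V_N = \{N' \in X : N' \supseteq N\}$ coincides with $\{N' \in X : F \subseteq N'\}$, which is a clopen subset of $X$, hence a neighborhood of $N$. Therefore no points outside $V_N$ can contribute to the Cantor-Bendixson analysis at $N$, and $\CB_X(N) = \CB_{V_N}(N)$. The map $N' \mapsto N'/N$ is a homeomorphism $V_N \to \mathcal{N}_H(G/N)$ sending $N$ to $\{1\}$, so $\CB_{V_N}(N) = \cb_H(G/N)$.

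Third, since $G/N$ inherits the hypotheses of Proposition \ref{cblp} (it has $H$-max-n as a quotient, and residual finiteness of all its $H$-quotients is inherited because $H$-quotients of $G/N$ are $H$-quotients of $G$), we get $\cb_H(G/N) = \ell'_H(G/N)$. By Lemma \ref{extp}, $\ell'_H(G/N) \leq \ell'_H(G)$ for every $N$, with equality at $N = \{1\}$. Taking the supremum yields $\CB(X) = \ell'_H(G) + 1 = \cb_H(G) + 1$. The only mildly delicate step is the identification $\CB_X(N) = \CB_{V_N}(N)$, but this is immediate from the observation that $H$-max-n makes every element of $X$ finitely generated as an $H$-stable normal subgroup, so that the principal upper set $V_N$ is open.
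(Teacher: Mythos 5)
Your proof is correct and uses the same underlying idea as the paper: localizing at each point $N$ via the clopen neighborhood $V_N\cong\mathcal{N}_H(G/N)$ and applying Proposition~\ref{cblp} to the quotient. The paper's proof is terser, identifying $\mathcal{N}_H(G)^{(\alpha)}$ directly as the (finite, non-empty) set of finite normal $H$-invariant subgroups and concluding $\mathcal{N}_H(G)^{(\alpha+1)}=\emptyset$, but it relies on the same localization that you spell out explicitly.
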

\begin{proof}
Set $\alpha=\cb_H(G)$. Then by Proposition \ref{cblp}, $\mathcal{N}(G)^{(\alpha)}$ is contained in the set of finite normal $H$-invariant subgroups of $G$ (it is actually equal in view of Lemma \ref{quotfini}) and contains $\{1\}$. The assumption $H$-max-n then  implies that the non-empty set $\mathcal{N}(G)^{(\alpha)}$ is finite, so $\mathcal{N}(G)^{(\alpha+1)}=\emptyset$ and $\CB(\mathcal{N}(G))=\alpha+1$.
\end{proof}

\begin{lem}Let $G$ be an $H$-group with $H$-max-n and $N$ an $H$-stable normal subgroup of $G$ contained in $E_H(G)$ (resp. $W_H(G)$). Then $\ell_H(G)=\ell_H(G/N)+\ell_{GH}(N)$ (resp. $\ell'_H(G)=\ell'_H(G/N)+\ell'_{GH}(N)$). Moreover, $\ell(G/E_H(G))$ and $\ell'(G/W_H(G))$ are not successor ordinals.\label{reducw}
\end{lem}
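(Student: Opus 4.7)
For the two equality statements, the plan is to apply directly the two-sided bounds of Lemma~\ref{ext} (resp.\ Lemma~\ref{extp}) to the extension $1\to N\to G\to G/N\to 1$ of $H$-groups. The hypothesis $N\subseteq E_H(G)$ (resp.\ $W_H(G)$) forces $\ell_H(N)<\omega$ (resp.\ $\ell'_H(N)<\omega$); and since every $GH$-stable subgroup is in particular $H$-stable, a direct induction on the inductive definitions yields $\ell_{GH}(N)\le\ell_H(N)<\omega$ (and likewise $\ell'_{GH}(N)\le\ell'_H(N)<\omega$). The decisive ordinal-arithmetic fact is that $\alpha\oplus n=\alpha+n$ for every $n<\omega$, immediate from Cantor normal form: adding $n=\omega^{0}\cdot n$ on the right simply increments the lowest-degree coefficient (or appends one) under both operations. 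Therefore the upper bound $\ell_H(G/N)\oplus\ell_{GH}(N)$ in Lemma~\ref{ext} collapses onto the lower bound $\ell_H(G/N)+\ell_{GH}(N)$, yielding the first equality; the second equality follows in the same way from Lemma~\ref{extp}.

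For the ``moreover'' clauses, I proceed by transfinite induction on $\ell_H(G)$ (resp.\ $\ell'_H(G)$). The case $E_H(G)\ne\{1\}$ is handled by the induction hypothesis: Part~(1) gives $\ell_H(G/E_H(G))<\ell_H(G)$, and $G/E_H(G)$ has trivial $E_H$ by the quotient property $E_H(G/E_H(G))=\{1\}$ recorded in the definition, so applying the hypothesis to $G/E_H(G)$ yields that $\ell_H(G/E_H(G))$ is not a successor. It remains to prove: if $E_H(G)=\{1\}$, then $\ell_H(G)$ is not a successor. Suppose for contradiction $\ell_H(G)=\beta+1$. Since a successor ordinal cannot be a strict supremum of strictly smaller ordinals, the defining supremum $\sup\{\ell_H(G/N)+1\}$ must be attained, yielding some $H$-stable normal $N\ne\{1\}$ with $\ell_H(G/N)=\beta$; the lower bound in Lemma~\ref{ext} then forces $\beta+\ell_{GH}(N)\le\beta+1$, so $\ell_{GH}(N)=1$, i.e.\ $N$ is $GH$-simple.

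The main obstacle is then to upgrade this $GH$-simplicity to the stronger bound $\ell_H(N)<\omega$, since the latter puts $N$ inside $E_H(G)=\{1\}$ and closes the contradiction $N=\{1\}$. In the case relevant to all the applications of this paper---where the relevant $N$ is abelian, so that $G$-conjugation acts trivially on $N$ and the $GH$- and $H$-actions on $N$ coincide---one has $\ell_H(N)=\ell_{GH}(N)=1$ directly. I expect this upgrade to be the technical heart of the proof in the general $H$-group setting, requiring a careful combination of the induction hypothesis applied to the smaller-length quotient $G/N$ with the structural constraints imposed by $GH$-simplicity. The $\ell'$-version runs in parallel, with Lemma~\ref{extp} replacing Lemma~\ref{ext}, the threshold $\omega^{2}$ replacing $\omega$, $W_H$ replacing $E_H$, and the inductive characterization of $\ell'_H$ established in Subsection~\ref{rl} (whose supremum ranges over $N$ with $\ell_{GH}(N)\ge\omega$) in the role of the defining supremum for $\ell_H$.
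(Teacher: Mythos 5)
Your proof is correct in its overall structure and is essentially the paper's argument: for the two equality statements you apply the two-sided bounds of Lemma~\ref{ext} (resp.\ Lemma~\ref{extp}) and the fact that $\alpha\oplus n=\alpha+n$ when $n<\omega$; for the ``moreover'' clause you extract an $H$-stable normal $N\neq\{1\}$ attaining the supremum in the inductive definition, and the left-hand inequality in Lemma~\ref{ext} (resp.\ \ref{extp}) forces $\ell_{GH}(N)=1$ (resp.\ $\ell'_{GH}(N)=1$). Up to that point you are in step with the paper.

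The ``main obstacle'' you flag, however, is not a real gap: you do not need to upgrade $\ell_{GH}(N)=1$ to $\ell_H(N)<\omega$. The definition of $E_H(G)$ (resp.\ $W_H(G)$) has to be read with the \emph{ambient} length $\ell_{GH}(N)$ on the subgroup $N$: that is the only well-defined choice in general (a standalone $\ell_H(N)$ need not even make sense, since an $H$-stable normal subgroup $N$ of $G$ may fail $H$-max-n on its own), it is the quantity appearing on the subgroup side throughout Lemma~\ref{ext}, and the paper's definition paragraph explicitly says ``Using Lemma~\ref{ext}, we can define\dots'', which only makes sense under this reading. With this convention, $\ell_{GH}(N)=1<\omega$ already gives $N\subseteq E_H(G)=\{1\}$, a contradiction, and the argument closes. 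The paper's own proof of the lemma confirms this: it cites the left-hand inequality of Lemma~\ref{ext}, which bounds $\ell_{GH}(N)$, writes the conclusion loosely as ``$\ell_H(N)\le 1$'', and immediately deduces $N\subset E_H(G)$. So your abelian-case workaround and the anticipated ``technical heart'' are unnecessary.

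One minor further remark: the transfinite induction wrapper you set up for the ``moreover'' clause is not needed. It suffices to prove the unconditional claim ``$E_H(G)=\{1\}\Rightarrow\ell_H(G)$ is not a successor'' and then apply it to $G/E_H(G)$, using the already-recorded fact $E_H(G/E_H(G))=\{1\}$; this is what the paper does. Your induction is sound but adds a layer that the argument does not require.
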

\begin{proof}
The first statement is a particular case of Lemmas \ref{ext} and \ref{extp}, since when $\alpha,\beta$ are ordinals with $\beta$ finite, $\alpha\oplus\beta=\alpha+\beta$.

If $\ell_H(G)=\alpha+1$, then by definition of $\ell_H$, for some non-trivial $H$-stable normal subgroup $N$, we have $\ell_H(G/N)=\alpha$. From the left-hand inequality in Lemma \ref{ext} we deduce $\ell_H(N)\le 1$, so $\ell_H(N)=1$, so $N\subset E_H(G)$ and $E_H(G)$ is non-trivial. Similarly if $\ell'_H(G)$ is a successor ordinal, $W_H(G)$ is non-trivial. This proves the second statement.
\end{proof}

Suppose that we have an extension of $H$-groups
$$1\to M\to G\to Q\to 1,$$
for which we want to compute $\ell'_H(G)$.

\begin{prop}
Let $G$ be an $H$-group with $H$-max-n lying in an extension
$$1 \to M\to G\to Q\to 1$$ with $\ell'_H(Q)<\infty$. Then $$\ell'_H(G)=\ell'_{GH}(M/W_{GH}(M))+\ell'_{GH}(W_H(G)).$$\label{ellprimedecompo} 
\end{prop}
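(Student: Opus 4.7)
\noindent\emph{Proof plan.}
The plan is to peel off the $W_H(G)$-contribution via Lemma \ref{reducw}, identify what remains with $M/W_{GH}(M)$ by passing to the quotient $G/W_H(G)$, and close with the limit/successor dichotomy of Lemma \ref{reducw}(2) combined with Lemma \ref{extp}.

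Applying Lemma \ref{reducw} with $N=W_H(G)$ gives
\[
\ell'_H(G)=\ell'_H(G/W_H(G))+\ell'_{GH}(W_H(G)),
\]
reducing the claim to $\ell'_H(G/W_H(G))=\ell'_{GH}(M/W_{GH}(M))$. Set $W:=W_H(G)$, $V:=M\cap W$, $\bar G:=G/W$ and $\bar M:=M/V$. I would verify three points. (a) $V=W_{GH}(M)$: the inclusion $V\subseteq W_{GH}(M)$ holds because $V$ is $GH$-stable in $M$ with $\ell_{GH}(V)\le\ell_{GH}(W)<\omega^2$, and the reverse because $W_{GH}(M)$ is $H$-stable normal in $G$ with $\ell_{GH}<\omega^2$, hence lies in $W\cap M=V$. (b) The $G$-action on $\bar M$ factors through $\bar G$: for $w\in W$, $m\in M$, normality of $W$ yields $[w,m]=w m w^{-1}m^{-1}\in W\cap M=V$. (c) $W_{\bar GH}(\bar M)=\{1\}$: any $\bar GH$-stable $L\subseteq\bar M$ with $\ell'_{\bar GH}(L)<\omega$ lifts to a $GH$-stable $\tilde L\subseteq M$ containing $V$; Lemma \ref{ext} then gives $\ell_{GH}(\tilde L)\le\ell_{GH}(V)\oplus\ell_{GH}(L)<\omega^2$, forcing $\tilde L\subseteq W_{GH}(M)=V$ and hence $L=\{1\}$. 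Together, $\bar M/W_{\bar GH}(\bar M)\simeq M/W_{GH}(M)$ as $\bar GH$-groups.

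After these reductions, we may assume $W_H(G)=\{1\}$ (so also $W_{GH}(M)=\{1\}$), and the identity becomes $\ell'_H(G)=\ell'_{GH}(M)$. By the second statement of Lemma \ref{reducw}, both $\ell'_H(G)$ and $\ell'_{GH}(M)$ are either $0$ or limit ordinals. From Lemma \ref{extp} and the assumption that $\ell'_H(Q)$ is a finite ordinal, left-absorption of a finite ordinal and a Cantor-form calculation of natural sums with a finite addend yield
\[
\ell'_{GH}(M)\le\ell'_H(G)\le\ell'_{GH}(M)+\ell'_H(Q).
\]
Every ordinal strictly between these bounds is a successor; since $\ell'_H(G)$ is not, we conclude $\ell'_H(G)=\ell'_{GH}(M)$.

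The main obstacle is verification (a), which relies on interpreting the $\ell_H$ in the definition of $W_H(G)$ in a way compatible with the $\ell_{GH}$-notation of Lemma \ref{reducw}; once this is pinned down, the remainder of the argument is ordinal-arithmetic bookkeeping combined with Lemma \ref{reducw}(2).
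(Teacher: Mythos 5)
Your proposal is correct and follows essentially the same route as the paper: first apply Lemma \ref{reducw} with $N=W_H(G)$, then identify the remaining term via the equality $W_H(G)\cap M = W_{GH}(M)$ and the induced extension $1\to M/W_{GH}(M)\to G/W_H(G)\to Q'\to 1$, and finish with the sandwich from Lemma \ref{extp} combined with the limit/successor dichotomy from Lemma \ref{reducw}. The paper organizes the end slightly differently (a two-case split on whether $M/W_{GH}(M)$ is trivial rather than your explicit reduction to $W_H(G)=\{1\}$, and it leaves the identifications in your steps (a)--(c) implicit), but the key lemmas and ordinal-arithmetic closing argument are the same.
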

\begin{proof}

By Lemma \ref{reducw}, $\ell'_H(G)=\ell'_H(G/W_H(G))+\ell'_{GH}(W_H(G))$. Now $W_H(G)\cap M=W_{GH}(M)$, we have an extension
$$1\to M/W_{GH}(M)\to G/W_H(G)\to Q'\to 1,$$
for some quotient $Q'$ of $Q$. 

We consider two cases.
\begin{itemize}
\item $M/W_{GH}(M)=\{1\}$. Then $G=W_H(G)$ and the lemma holds.
\item $M/W_{GH}(M)\neq\{1\}$. Then by Lemma \ref{reducw}, $\ell'_{GH}(M/W_{GH}(M))$ is a limit ordinal, so as $\ell'_H(Q')<\omega$, we get $$\ell'_H(Q')+\ell'_{GH}(M/W_{GH}(M))=\ell'_{GH}(M/W_{GH}(M)).$$ By Lemma \ref{extp}
$$\ell'_{GH}(M/W_{GH}(M))\le\ell'(G/W_H(G))\le\ell'_{GH}(M/W_{GH}(M))+\ell_H(Q');$$
and by Lemma \ref{reducw}, $\ell'(G/W_H(G))$ is a limit ordinal and $\ell'_H(Q')<\omega$, so we thus get $\ell'(G/W_H(G))=\ell'_{GH}(M/W_{GH}(M))$.\qedhere
\end{itemize}
\end{proof}

\begin{cor}Under the same hypotheses, if $M$ contains its own centralizer in $G$ and $W_{GH}(M)=\{1\}$, then $$\ell'_H(G)=\ell'_{GH}(M).$$\label{owncentralizer}\end{cor}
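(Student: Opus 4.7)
The idea is to combine Proposition \ref{ellprimedecompo} with the two extra hypotheses to show that the ``$W$-correction term" $\ell'_{GH}(W_H(G))$ vanishes. Since by hypothesis $W_{GH}(M)=\{1\}$, the formula of Proposition \ref{ellprimedecompo} already reads
$$\ell'_H(G)=\ell'_{GH}(M)+\ell'_{GH}(W_H(G)),$$
so the entire task reduces to proving $W_H(G)=\{1\}$, which will give $\ell'_{GH}(W_H(G))=0$ and hence the claim.

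\textbf{Showing $W_H(G)=\{1\}$.} Write $W=W_H(G)$. First I would look at $W\cap M$. Since $W$ is normal in $G$ and $H$-stable, and $M$ is also normal in $G$ and $H$-stable, the intersection $W\cap M$ is $GH$-stable and normal in $M$. Moreover, since every $GH$-stable normal subgroup of $W$ is in particular $H$-stable, one has $\ell_{GH}(W\cap M)\le\ell_{GH}(W)\le\ell_H(W)<\omega^{2}$, where the last inequality uses the very definition of $W_H(G)$. Thus $\ell'_{GH}(W\cap M)<\omega$, so $W\cap M\subseteq W_{GH}(M)=\{1\}$ by maximality. This is the main computational step; the only thing to be careful about is that passing from $H$-invariance to $GH$-invariance can only decrease the length, since it can only reduce the class of admissible subgroups.

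\textbf{Using the centralizer hypothesis.} Now that $W\cap M=\{1\}$, and both $W$ and $M$ are normal in $G$, the standard commutator identity gives $[W,M]\subseteq W\cap M=\{1\}$, so $W\subseteq C_G(M)$. Invoking the assumption that $M$ contains its own centralizer in $G$, one gets $W\subseteq M$, and combining with $W\cap M=\{1\}$ yields $W=\{1\}$, as desired.

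\textbf{Main obstacle.} There is no deep obstacle here: the result is essentially bookkeeping on top of Proposition \ref{ellprimedecompo}. The only delicate point to get right is the inequality $\ell_{GH}(W\cap M)\le\ell_H(W)$, which relies on the fact that the forgetful comparison between ``$H$-stable normal in $W$" and ``$GH$-stable normal in $M$" (applied to subgroups of $W\cap M$) goes in the expected direction; once this is clarified, the rest is immediate.
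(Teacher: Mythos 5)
Your proof is correct and follows essentially the same line the paper takes (the paper leaves the corollary unproved, but the same argument appears explicitly when it is applied in the proof of Theorem \ref{cbme}\textup{(\ref{pri})}: first deduce $W_H(G)\cap M=\{1\}$ from $W_{GH}(M)=\{1\}$, then use the centralizer hypothesis to force $W_H(G)\subset M$ and hence $W_H(G)=\{1\}$, so that the correction term in Proposition \ref{ellprimedecompo} vanishes). The one point worth flagging is notational rather than mathematical: the paper's definition of $W_H(G)$ already measures the length of a normal $H$-invariant subgroup $N\le G$ by $\ell_{GH}(N)$ (per the convention fixed just before Lemma \ref{ext}), so your extra comparison $\ell_{GH}(W)\le\ell_H(W)$ is a safe but unneeded cushion; what one really needs is just $\ell_{GH}(W\cap M)\le\ell_{GH}(W)<\omega^2$, which you also invoke via Lemma \ref{ext}.
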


\subsection{Proof of Theorem \ref{cbme}}\label{pcb}

\begin{lem}\label{hirlp}
Let $G$ be an $H$-group with max-n. Then if finite, $\ell'_H(G)$ is the supremum of lengths $k$ of chains of $GH$-subgroups $1=N_0\subset\dots\subset N_k=G$ with $\ell'_H(N_i/N_{i-1})\ge 1$ for all $i$.
\end{lem}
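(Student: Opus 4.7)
The natural approach is induction on $\alpha=\ell'_H(G)$, which is a finite ordinal by assumption. For the base case $\alpha=0$, the preceding Proposition together with Lemma \ref{quotfini} yields monotonicity $\ell'_H(G/N)\le\ell'_H(G)$ for every $H$-stable normal subgroup $N$ of $G$; so any chain of length $k\ge 1$ satisfying the hypothesis would force $\ell'_H(G/N_{k-1})\ge 1>0=\alpha$, a contradiction. Hence no chain of positive length is valid, and the supremum equals $0=\alpha$ (with the convention $\sup\emptyset=0$).

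For the inductive step $\alpha\ge 1$, the lower bound is the easier half. Since the supremum defining $\ell'_H(G)$ is taken over a set of ordinals bounded by the finite $\alpha$, it is attained: we can choose a $GH$-subgroup $M\subsetneq G$ with $\ell_{GH}(M)\ge\omega$ and $\ell'_H(G/M)=\alpha-1$. Applying the induction hypothesis to the $H$-group $G/M$ produces a chain $1=\bar N_0\subsetneq \bar N_1\subsetneq\cdots\subsetneq\bar N_{\alpha-1}=G/M$ of $(G/M)H$-subgroups with each $\ell'_H(\bar N_i/\bar N_{i-1})\ge 1$. Pulling back through $\pi\colon G\to G/M$ and prepending the step $1\subset M$ produces a chain of $GH$-subgroups of $G$ of length $\alpha$ satisfying the hypothesis; indeed the first quotient $M$ satisfies $\ell'_H(M)\ge \ell'_{GH}(M)\ge 1$ since $\ell_{GH}\le\ell_H$ and $\ell_{GH}(M)\ge\omega$.

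The upper bound --- that any valid chain has $k\le\alpha$ --- is the main obstacle. Given a chain $1=N_0\subsetneq\cdots\subsetneq N_k=G$ with $\ell'_H(N_i/N_{i-1})\ge 1$, iterating Lemma \ref{extp} on the successive extensions $1\to N_i/N_{i-1}\to G/N_{i-1}\to G/N_i\to 1$ and telescoping yields $\ell'_H(G)\ge\sum_{i=1}^k\ell'_{GH}(N_i/N_{i-1})$. The delicate point is that a priori $\ell'_{GH}\le\ell'_H$, so the hypothesis $\ell'_H(N_i/N_{i-1})\ge 1$ does not immediately give $\ell'_{GH}(N_i/N_{i-1})\ge 1$. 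One resolves this by refining each step: between $N_{i-1}$ and $N_i$, the $H$-max-n condition together with $\ell_H(N_i/N_{i-1})\ge\omega$ forces the existence of a $GH$-subgroup $N_i'$ of $G$ with $N_{i-1}\subsetneq N_i'\subseteq N_i$ and $\ell_{GH}(N_i'/N_{i-1})\ge\omega$ (otherwise the $GH$-lattice above $N_{i-1}$ in $N_i$ would be $GH$-artinian of finite length, contradicting the presence of an infinite $H$-chain together with max-n on its $G$-translates). After such refinement, every consecutive quotient has $\ell'_{GH}\ge 1$, the telescoping sum gives $\ell'_H(G)\ge k$, and the induction closes.
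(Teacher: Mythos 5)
Your overall strategy (induction on $\ell'_H(G)$, monotonicity from Lemma~\ref{quotfini} and the unlabeled Proposition in~\S\ref{rl} for the base case, and telescoping Lemma~\ref{extp} for the upper bound) is indeed what the paper has in mind; the paper compresses the lower bound into ``a trivial induction'' and attributes the upper bound to Lemma~\ref{ext}.

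However, your final paragraph contains a genuine gap. You correctly notice that the telescoping only yields $\ell'_H(G)\ge\sum_i\ell'_{GH}(N_i/N_{i-1})$ and that the stated hypothesis (with subscript $H$) is a priori weaker. But the repair you propose --- that ``the $H$-max-n condition together with $\ell_H(N_i/N_{i-1})\ge\omega$ forces'' $\ell_{GH}(N_i/N_{i-1})\ge\omega$ --- is not established by your parenthetical remark about $G$-translates, and as stated this implication is not a consequence of the hypotheses. In fact, the quantity $\ell_H(N_i/N_{i-1})$ need not even be well-defined: $H$-max-n on $G$ gives ACC on $GH$-invariant normal subgroups of subquotients, but says nothing about $H$-invariant normal subgroups of the subquotient $N_i/N_{i-1}$ (which need not be normal in $G$). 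So one cannot in general pass from ``$\ell_H\ge\omega$'' to ``$\ell_{GH}\ge\omega$'' by refining the chain.

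The resolution is that the condition in the lemma should be read as $\ell'_{GH}(N_i/N_{i-1})\ge 1$ (equivalently $\ell'_{(G/N_{i-1})H}(N_i/N_{i-1})\ge 1$): this is what makes the statement well-posed, it is what iterating Lemma~\ref{extp} delivers directly, and it matches the one place the lemma is used, namely in the proof of Theorem~\ref{cbme}, where $H=G$ and the two notions coincide. With that reading, the telescoping closes immediately and your last paragraph is unnecessary. (The same remark applies, more mildly, to the point in your lower-bound paragraph where you invoke $\ell'_H(M)\ge\ell'_{GH}(M)$; with the $GH$-subscript reading the prepended step simply uses $\ell'_{GH}(M)\ge 1$.)
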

\begin{proof}
If we have such a chain, then it follows from Lemma \ref{ext} that $\ell'_H(G)\ge k$. The converse is a trivial induction on $\ell'_H(G)$.
\end{proof}

Let us now prove Theorem \ref{cbme}.
It follows from the discussion in Paragraph \ref{lengthcr} that if $M$ is a noetherian module over a commutative ring, then $\dim(M)\le 1$ (Krull dimension) if and only if $\ell'(M)<\omega$. Therefore the definitions of $W(M)$ given in the introduction and in Paragraph \ref{rl} coincide.
Besides, we have

Suppose that $G$ is residually finite as well as its quotients and $N$ is a normal subgroup. By Lemma \ref{hirlp},if $\ell'G(N)<\omega$, then it coincides with $h_G(N)$ as defined in the introduction. Similarly, $W(G)$ coincides with $\textnormal{Hir}(G)$, also defined in the introduction. Also, $\cb(G)=\ell'(G)$ by Proposition \ref{cblp}.

Given these remarks, we see that (\ref{icb}) of the theorem appears as a particular case of Proposition \ref{ellprimedecompo}.

For (\ref{est}), the left-hand inequality is clear since $\ell'_G(M)\le\ell'_G(G)=\cb(G)$. 

Finally (\ref{pri}) is a consequence of (\ref{icb}). Indeed, $W(M)=\{0\}$ and $\ell'(M)=\omega^{d-1}\cdot r$ by Proposition \ref{elc}; moreover $\textnormal{Hir}(G)=\{1\}$. Indeed, since $W(M)=\{1\}$ ($M$ being now written multiplicatively), $\textnormal{Hir}(G)\cap M=\{1\}$. In particular, $\textnormal{Hir}(G)$ centralizes $M$. Since the action of $Q$ on $M$ is faithful, this implies that the projection of $\textnormal{Hir}(G)$ on $Q$ is trivial, hence $\textnormal{Hir}(G)\subset M$, hence $\textnormal{Hir}(G)=\{1\}$\qed.

\section{Some examples}\label{se}

\begin{prop}\label{hirsch}
If $G$ is a virtually polycyclic group, then $\ell'(G)\le h(G)$, the Hirsch length of $G$. The equality $\ell'(G)=h(G)$ holds if and only if $G$ is supersolvable (e.g. $G$ is nilpotent).
\end{prop}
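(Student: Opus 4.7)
The framework comes from Lemma~\ref{hirlp}: $G$ is virtually polycyclic, so by P.~Hall it satisfies max-n, and together with all its quotients it is residually finite. Applying Lemma~\ref{finite} to rewrite ``$\ell'\ge 1$'' as ``infinite'', Lemma~\ref{hirlp} asserts that $\ell'(G)$ equals the supremum of the lengths $k$ of normal chains $1=N_0\subset N_1\subset\dots\subset N_k=G$ all of whose factors $N_i/N_{i-1}$ are infinite. For any such chain, each factor is an infinite virtually polycyclic group, hence of Hirsch length at least $1$; additivity of Hirsch length in extensions then yields $k\le\sum_i h(N_i/N_{i-1})=h(G)$, so $\ell'(G)\le h(G)$.

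To obtain equality when $G$ is supersolvable, fix a normal series $1=G_0\lhd G_1\lhd\dots\lhd G_n=G$ with each $G_i\lhd G$ and each factor $G_i/G_{i-1}$ cyclic, and let $i_1<\dots<i_h$ be the indices at which this factor is infinite cyclic (so $h=h(G)$). Set $N_j:=G_{i_j}$ and $N_0:=1$. Each $N_j/N_{j-1}$ is an extension built from exactly one infinite cyclic step together with finitely many finite cyclic steps, hence infinite. This chain of length $h(G)$ is admissible in Lemma~\ref{hirlp}, giving $\ell'(G)\ge h(G)$; combined with the upper bound, equality follows. The parenthetical ``nilpotent'' case is immediate since every finitely generated nilpotent group is supersolvable.

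For the converse, suppose $\ell'(G)=h(G)$ and fix a chain realising the supremum. Additivity of the Hirsch length forces $h(N_i/N_{i-1})=1$ for every $i$, i.e.\ each factor is virtually infinite cyclic. The task is then to refine this into a $G$-normal series with cyclic factors. I expect this to be the main obstacle of the proof: one must promote each virtually $\mathbf{Z}$ $G$-normal subquotient to an infinite cyclic one that is still normal in $G$, and then absorb the residual finite cyclic pieces into adjacent steps. The natural tool is that in a virtually infinite cyclic group the unique maximal torsion-free normal subgroup is characteristic and hence $G$-invariant; inserting it into the chain and iterating should produce the desired normal cyclic refinement, showing $G$ is supersolvable.
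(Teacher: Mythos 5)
Your proof is essentially the paper's, recast through Lemmas \ref{finite} and \ref{hirlp}: $\ell'(G)$ is the largest number of infinite factors in a $G$-normal series, $h(G)$ is the largest number of infinite factors in a subnormal series, additivity of $h$ then gives $\ell'(G)\le h(G)$, and grouping finite factors with adjacent infinite ones in a $G$-normal cyclic series gives $\ell'(G)\ge h(G)$ when $G$ is supersolvable. Both directions of the bound match the paper's argument.

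For the converse you correctly reduce to the claim that a virtually polycyclic group admitting a $G$-normal series all of whose factors are virtually infinite cyclic must be supersolvable, and you flag this step as the unresolved obstacle. That is indeed a gap in your write-up, but you should be aware that the paper's own proof glosses over exactly the same point: it asserts without argument that supersolvability amounts to having a normal series whose \emph{infinite} subfactors are cyclic, which is precisely what you are trying to prove. As stated that assertion (and hence the proposition itself) is not quite right: a finite non-supersolvable group such as $A_4$ has $h(G)=\ell'(G)=0$, so the ``only if'' direction should really be read as controlling only the infinite part of the normal series. Your idea of extracting a characteristic infinite cyclic subgroup from each virtually-$\mathbf{Z}$ factor is sound --- for instance the verbal subgroup of $n$th powers, where $n$ is the index of an infinite cyclic normal subgroup, is characteristic, contained in that copy of $\mathbf{Z}$, and still of finite index, hence infinite cyclic --- so the torsion-free refinement goes through. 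What cannot be salvaged in general is the further refinement of the residual finite factors into $G$-normal cyclic pieces; that is exactly the $A_4$ obstruction, and it is the part both you and the paper leave implicit.
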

\begin{proof}
Indeed, by Lemma \ref{finite}, if finite, $\ell'(G)$ is the greatest number of infinite subfactors in a {\it normal} series of $G$. On the other hand, $h(G)$ is the greatest number of subfactors in a {\it subnormal} series of $G$. To say that $G$ is supersolvable just means that there exists a normal series in which all infinite subfactors are cyclic, whence the equality. If $G$ is virtually polycyclic, there exists a normal series with exactly $\ell'(G)$ infinite subfactors, all torsion-free (at the cost of adding some finite subfactors in the normal series). If $G$ is not supersolvable, then one of these infinite subfactors has to have rank at least two, so $h(G)>\ell'(G)$. 
\end{proof}

\begin{exe}
If $G=\mathbf{Z}^k\rtimes F$ with $F$ finite, then $\ell'(G)$ is the number of irreducible representations in which $\mathbf{Q}^k$ decomposes under the action of $F$.
\end{exe}

\begin{prop}\label{wdwn}
For all $d\ge 0$, $n\ge 1$,
$$\ell'(\mathbf{Z}^k\wr\mathbf{Z}^d)=\omega^d\cdot k.$$
\end{prop}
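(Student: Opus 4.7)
The plan is to realize $G=\mathbf{Z}^k\wr\mathbf{Z}^d$ as a split extension $1\to M\to G\to Q\to 1$, where $Q=\mathbf{Z}^d$ and $M=(\mathbf{Z}^k)^{(Q)}$ is the base of the wreath product (a free $\mathbf{Z}[Q]$-module of rank $k$), and to apply Corollary \ref{owncentralizer}. The case $d=0$ is immediate since then $G=\mathbf{Z}^k$ is abelian and Proposition \ref{hirsch} gives $\ell'(G)=h(G)=k=\omega^{0}\cdot k$, so I assume $d\ge 1$ from now on. The group $G$ is finitely generated metabelian, hence satisfies max-n, and $\ell'(Q)=d<\omega$, so the hypotheses of Proposition \ref{ellprimedecompo} and its Corollary \ref{owncentralizer} are in place.

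The first step is to compute $\ell'_G(M)$. The ring $A=\mathbf{Z}[Q]\cong\mathbf{Z}[t_1^{\pm 1},\ldots,t_d^{\pm 1}]$ is a commutative Noetherian domain of Krull dimension $d+1$, and $M\cong A^k$ is torsion-free of rank $k$ over $A/(0)$. Applying Proposition \ref{elc} to the zero prime yields $\ell_A(M)=\omega^{d+1}\cdot k$, hence $\ell'_G(M)=\omega^{d}\cdot k$.

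It then remains to check the two hypotheses of Corollary \ref{owncentralizer}. For the self-centralizing condition: $Q$ acts faithfully on $M$ (any nonzero $q\in Q$ acts by translation of indices, shifting the basis vector $\delta_0\otimes e_1\in M$ to the distinct basis vector $\delta_q\otimes e_1$), so the centralizer of $M$ in $G=M\rtimes Q$ is exactly $M$. For $W_G(M)=\{0\}$: any nonzero $A$-submodule $N\subset M$ contains a cyclic submodule $A\cdot n\cong A$ (since $M$ is torsion-free over the domain $A$), so $\ell_A(N)\ge \ell_A(A)=\omega^{d+1}\ge\omega^{2}$, i.e.\ $\ell'_A(N)\ge\omega$, which excludes $N$ from $W_G(M)$. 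Corollary \ref{owncentralizer} then gives $\ell'(G)=\ell'_G(M)=\omega^{d}\cdot k$. No substantial obstacle is anticipated; the proof is essentially an assembly of Proposition \ref{elc} and Corollary \ref{owncentralizer}, with the only mildly substantive input being the faithfulness of the $Q$-action and the observation that nonzero submodules of a torsion-free module over a domain have full Krull dimension.
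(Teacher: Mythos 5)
Your proof is correct and follows essentially the same route as the paper: writing $G=\mathbf{Z}[Q]^k\rtimes Q$, applying Proposition \ref{elc} to compute $\ell'$ of the base, invoking Corollary \ref{owncentralizer} for $d\ge 1$, and handling $d=0$ via Proposition \ref{hirsch}. The only difference is that you explicitly verify the two hypotheses of Corollary \ref{owncentralizer} (faithfulness of the $Q$-action and $W_{GQ}(M)=\{0\}$), which the paper leaves implicit.
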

\begin{proof}
The group $G=\mathbf{Z}^k\wr\mathbf{Z}^d$ can be written as $\mathbf{Z}[Q]^k\rtimes Q$ with $Q=\mathbf{Z}^d$. As $\mathbf{Z}[Q]$ is a domain of Krull dimension $d+1$, we have $\ell'_Q(\mathbf{Z}[Q]^k)=\omega^{d}\cdot k$ by Proposition \ref{elc}. Now, if $d\ge 1$, Corollary \ref{owncentralizer} applies to give $\ell'(G)=\omega^d\cdot k$. The case $d=0$ is a particular case of Proposition \ref{hirsch}.
\end{proof}

Denote by $\mathbf{C}_m$ the cyclic group of order $m$ and by $\delta(m)$ the total number factors in a prime decomposition of $m$ (e.g. $\delta(18)=3$).

\begin{prop}For all $d\ge 1$,
$$\ell'(\mathbf{C}_m\wr\mathbf{Z}^{d+1})=\omega^d\cdot \delta(m);\;\;\ell'(\mathbf{C}_m\wr\mathbf{Z})=\delta(m)+1.$$
\end{prop}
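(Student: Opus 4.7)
The plan is, for $G = \mathbf{C}_m \wr Q$, to identify the $\mathbf{Z}[Q]$-module $M := (\mathbf{Z}/m\mathbf{Z})[Q]$ sitting in the split extension $1 \to M \to G \to Q \to 1$, compute $\ell'_Q(M)$, and then apply Corollary \ref{owncentralizer} in the case $Q = \mathbf{Z}^{d+1}$ with $d \geq 1$, and Lemma \ref{extp} directly in the case $Q = \mathbf{Z}$.

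I first compute $\ell_{\mathbf{Z}[Q]}(M)$. Writing $m = p_1 \cdots p_k$ with $k = \delta(m)$ counting primes with multiplicity, the natural composition series of $\mathbf{Z}/m\mathbf{Z}$ as an abelian group induces a filtration $0 = N_0 \subset \cdots \subset N_k = M$ of $\mathbf{Z}[Q]$-submodules with $N_i/N_{i-1} \simeq \mathbf{F}_{p_i}[Q]$. Now $\mathbf{F}_{p_i}[Q]$ is a Laurent polynomial domain whose Krull dimension equals the rank $r$ of $Q$, so by Proposition \ref{elc} each factor is $r$-critical of length $\omega^{r}$. This is a critical series, and the Gulliksen formula of Paragraph \ref{lengthcr} gives $\ell_{\mathbf{Z}[Q]}(M) = \omega^{r} \cdot \delta(m)$; dividing by $\omega$ on the left, $\ell'_Q(M) = \omega^{r-1} \cdot \delta(m)$.

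For $Q = \mathbf{Z}^{d+1}$ with $d \geq 1$ I would then apply Corollary \ref{owncentralizer}. The conjugation action of $Q$ on $M$ is faithful (it permutes the canonical generators of $M$), so $M$ equals its own centralizer in $G$. For the other hypothesis $W_Q(M) = \{0\}$: given any nonzero $Q$-submodule $N \subset M$, let $i$ be the smallest index with $N \subset N_i$; then the projection of $N$ to $N_i/N_{i-1} \simeq \mathbf{F}_{p_i}[Q]$ is a nonzero submodule of a $(d+1)$-critical domain, and therefore again has Krull dimension $d+1$ and length $\geq \omega^{d+1}$. This gives $\ell(N) \geq \omega^{d+1} \geq \omega^2$, hence $\ell'(N) \geq \omega > 0$, establishing $W_Q(M) = \{0\}$. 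Corollary \ref{owncentralizer} then yields $\ell'(G) = \ell'_Q(M) = \omega^d \cdot \delta(m)$.

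For $Q = \mathbf{Z}$ we have $\ell'_Q(M) = \delta(m) < \omega$, so $W_Q(M) = M$ and Corollary \ref{owncentralizer} is unavailable; instead I would invoke the general bounds of Lemma \ref{extp}. Using $\ell'(\mathbf{Z}) = 1$, both quantities $\ell'(Q) + \ell'_Q(M)$ and $\ell'(Q) \oplus \ell'_Q(M)$ reduce to the same value on finite ordinals, namely $\delta(m) + 1$, pinning down $\ell'(G) = \delta(m) + 1$. The main obstacle in the whole proof is the verification that $W_Q(M) = \{0\}$ when $d \geq 1$: it requires the fact that nonzero submodules of the critical modules $\mathbf{F}_{p_i}[Q]$ inherit the ambient Krull dimension $d+1$, and it is precisely the hypothesis $d \geq 1$ that makes the resulting bound $\ell(N) \geq \omega^{d+1}$ exceed $\omega$ (which is why the $d = 0$ case behaves differently and produces the extra $+1$).
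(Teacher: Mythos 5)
Your proof is correct and follows essentially the same route as the paper: filter $(\mathbf{Z}/m\mathbf{Z})[Q]$ by $\delta(m)$ copies of $\mathbf{F}_{p_i}[Q]$, compute the reduced length of each piece via Proposition~\ref{elc}, and then pass from $\ell'_Q(M)$ to $\ell'(G)$ by Corollary~\ref{owncentralizer} when $d\ge 1$ and by the two-sided bound of Lemma~\ref{extp} when $Q=\mathbf{Z}$. The only cosmetic difference is that you invoke Gulliksen's critical-series formula where the paper invokes Lemma~\ref{extp} to sum the lengths of the factors (same content), and you spell out the verification that $W_{GQ}(M)=\{0\}$ and that $M$ is its own centralizer, which the paper leaves implicit after having just made the identical appeal to Corollary~\ref{owncentralizer} in Proposition~\ref{wdwn}.
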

\begin{proof}

The group $G=C_m\wr\mathbf{Z}^{d+1}$ can be written as $\mathbf{Z}/m\mathbf{Z}[Q]\rtimes Q$. The $\mathbf{Z}[Q]$-module $\mathbf{Z}/ m\mathbf{Z}[Q]$ can be written as an iterated extension of $\delta(m)$ modules, each of the form $\mathbf{Z}/p\mathbf{Z}[Q]$. As the latter is a domain of Krull dimension $d+1$, it has $\ell'=\omega^{d}$. Now Lemma \ref{extp} implies that $\ell'(\mathbf{Z}/m\mathbf{Z}[Q])=\omega^d\cdot \delta(d)$.

If $Q=\mathbf{Z}$, then in the principal ideal domain $\mathbf{Z}/p\mathbf{Z}[Q]$, every nonzero ideal has finite index, so $\ell'_Q(\mathbf{Z}/p\mathbf{Z}[Q])=1$. So $\mathbf{Z}/m\mathbf{Z}[Q]$ has a normal series of length $\delta(m)$ in which each subfactor has $\ell'_Q=1$, so $\ell'_Q(\mathbf{Z}/m\mathbf{Z}[Q])=\delta(m)$ by Lemma \ref{extp}. Again by Lemma \ref{extp}, we deduce that $\ell'(G)=\delta(m)+1$.\end{proof}

\section{Actions of finite groups}\label{finiteaction}

Let $A$ be a ring and $G$ a group acting on $A$ by ring automorphisms. We call a $GA$-{\it module} an $A$-module endowed with a $G$-action by group automorphisms, such that, for all $g\in G$, $a\in A$ and $m\in M$, we have $g(am)=(ga)(gm)$.

A $GA$-{\it submodule} is the same as a $G$-invariant $A$-submodule. In particular, if $G$ is finite, a module is finitely generated, resp. Noetherian as an $A$-module if and only if it so as a $GA$-module.

Assume now that $M$ is a Noetherian $GA$-module and that $G$ is finite. We consider the length and reduced length as defined in Section \ref{length}, with $H$ the underlying multiplicative semigroup of $A$. As we do this all along this section, we drop the index $A$ on $\ell'$.

So the definitions of Section \ref{length} read as 
$$\ell_G'(M)=\sup\{\ell_G'(M/N)|N\text{ non-Artinian $GA$-submodule of }M\}$$
and
$$\ell'(M)=\sup\{\ell'(M/N)|N\text{ non-Artinian $A$-submodule of }M\},$$
as it was defined in \cite{Cor}. Clearly, $\ell'_G(M)\le\ell'(M)$.

Suppose that $A$ is Noetherian, and, to simplify the exposition, that it has finite Krull dimension. Let $M$ be a finitely generated $A$-module of Krull dimension $d\ge 1$.
In \cite{Cor} we showed that
$$\ell(M)=\omega^{d}\cdot\ell_d(M)+o(\omega^d)$$
where $o(\omega^d)$ denotes some ordinal $<\omega^d$ and
and $\ell_d(M)$ is a positive integer. Similarly define $\ell_{G,d}(M)$ so that $\ell(M)=\omega^{d}\cdot\ell_{G,d}(M)+o(\omega^d)$. Note that {\it a priori} $\ell_d(M)$ is a non-negative integer.

\begin{prop}
Suppose that $A$ is Noetherian of finite Krull dimension $d$, and $G$ is finite of order $n$. Let $M$ be a finitely generated $GA$-module, of Krull dimension $\le d$ (as an $A$-module). Then
$$\ell_{G,d}(M)\le\ell_d(M)\le n\ell_{G,d}(M).$$ 
\end{prop}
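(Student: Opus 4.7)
The plan is to prove the two inequalities of the proposition separately. For the lower bound $\ell_{G,d}(M)\le\ell_d(M)$: since every $GA$-submodule of $M$ is an $A$-submodule, the inductive definition of length gives $\ell_G(M)\le\ell(M)$ by a straightforward induction; both quantities lie in $[0,\omega^{d+1})$ by the hypothesis on $M$, so comparing the coefficients of $\omega^d$ in the Cantor normal forms yields the inequality.

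For the upper bound $\ell_d(M)\le n\,\ell_{G,d}(M)$, the crux is a \emph{Key Lemma}: if $N$ is $GA$-critical of $GA$-Krull dimension $d$, then $\ell(N)\le\omega^d\cdot n$, and in particular $N$ has $A$-Krull dimension exactly $d$. Granting it, pick a $GA$-critical series $0=M_0\subset\cdots\subset M_r=M$ whose factors $F_i=M_i/M_{i-1}$ are $GA$-critical of dimension $\delta_i$ with $\delta_1\le\cdots\le\delta_r\le d$ and $|\{i:\delta_i=d\}|=\ell_{G,d}(M)$. Iterating Lemma~\ref{ext} along this filtration bounds $\ell(M)\le\bigoplus_i\ell(F_i)$ (natural sum). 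By the Key Lemma, the $\omega^d$-coefficient of $\ell(F_i)$ is at most $n$ when $\delta_i=d$, and zero when $\delta_i<d$; these coefficients add under the natural sum, giving $\ell_d(M)\le n\,\ell_{G,d}(M)$.

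The Key Lemma is proved by induction on $d$. For the base case $d=0$ ($N$ is $GA$-simple), pick a maximal proper $A$-submodule $P\subset N$ (which exists by Zorn's lemma, $N$ being finitely generated over $A$), so $N/P$ is $A$-simple; the $GA$-submodule $\tilde P:=\bigcap_{g\in G}g(P)$ is strictly contained in $N$, hence $\tilde P=0$ by $GA$-simplicity, and the canonical injection $N\hookrightarrow\bigoplus_g N/g(P)$ into a sum of $n$ simple $A$-modules yields $\ell(N)\le n$. For the inductive step $d\ge 1$, the hypothesis and a $GA$-critical filtration imply that any $GA$-module $Q$ with $\dim_{GA}Q<d$ satisfies $\ell(Q)<\omega^d$. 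Given $N$ $GA$-critical of $GA$-dim $d$, pick $P=N_{s-1}$ from an $A$-critical series of $N$, so $N/P$ is $A$-critical of dim $\beta_s=\dim_A N$; if $\tilde P=\bigcap_g g(P)\ne 0$, the above yields $\dim_A(N/\tilde P)<d$, forcing $\beta_s\le\dim_A(N/\tilde P)<d$, which contradicts $\beta_s\ge\dim_{GA}N=d$; hence $\tilde P=0$ and $\ell(N)\le n\cdot\omega^{\beta_s}$ via the embedding $N\hookrightarrow\bigoplus_g N/g(P)$ (each summand $A$-isomorphic up to $g$-twist to $N/P$, hence of the same $\ell$).

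The remaining step is verifying $\beta_s=d$, i.e., the coincidence $\dim_A N=\dim_{GA}N$ for $GA$-critical $N$: this is the standard equality of Krull dimensions across the finite ring extension $A\subset A\rtimes G$, obtainable either by invoking lying-over for integral extensions or by iterating the orbit argument to obtain a ``gap'' in the attainable values of $\dim_A(N/P)$ over nonzero $A$-submodules $P\subset N$, which is incompatible with $\dim_A N>d$. This last point is the main technical obstacle in a self-contained treatment.
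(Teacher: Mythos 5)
Your approach via a Key Lemma on $GA$-critical modules is genuinely different from the paper's. The paper argues directly by induction on $\ell(M)+d$: it picks an $A$-submodule $N$ with $\ell(N)=\omega^d$ (from an associated prime of coheight $d$), notes that the $GA$-submodule $N'$ generated by $N$ is, as an $A$-module, a quotient of $N^n$, hence $\ell_d(N')\le n$, and then invokes additivity of $\ell_d$ on the sequence $0\to N'\to M\to M/N'\to 0$ (a lemma from \cite{Cor}); the remaining case $\ell(M)\le\ell(N')$ is settled by a separate induction on $d$ showing $\ell_{G,d}(M)\ge 1$. Crucially, the paper's route never compares the $A$-Krull dimension of a subquotient with its $GA$-Krull dimension.

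Your route is, as you yourself flag, genuinely blocked at the need for $\dim_A N=\dim_{GA}N$. That fact is true (for $A$ Noetherian and $G$ finite, the skew group ring $A\ast G$ is a finite $A$-module, and the Gabriel--Rentschler dimensions of a noetherian module over $A$ and over $A\ast G$ agree; this is in McConnell--Robson), but neither of your suggested routes closes it: lying-over is a commutative prime-ideal theorem and does not apply to the noncommutative ring $A\ast G$, and the ``orbit gap'' sketch is not an argument. Note also that the gap is not confined to the top-dimensional factors: for $\delta_i<d$ you assert the $\omega^d$-coefficient of $\ell(F_i)$ vanishes, but that requires $\dim_A F_i<d$, which is again exactly the equality $\dim_A=\dim_{GA}$; the hypothesis $\dim_A M\le d$ only gives $\dim_A F_i\le d$, not $\dim_A F_i=\delta_i$. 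The same issue infects the inductive step of your Key Lemma, where after showing $\tilde P=0$ you get $\ell(N)\le\omega^{\beta_s}\cdot n$ with $\beta_s=\dim_A N\ge\delta$, but nothing forces $\beta_s=\delta$. So your proof is incomplete as written. Either import the noncommutative dimension-equality lemma explicitly as an external input, or (simpler) switch to the paper's argument, which sidesteps the comparison entirely.
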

\proof
The left-hand inequality is an obvious consequence of $\ell_G(M)\le\ell(M)$. We prove the right-hand inequality by induction on $\ell(M)+d$. First, if $\ell_d(M)=0$ this is trivial. So we suppose $\ell_d(M)\ge 1$.

Since $M$ has Krull dimension $d$, there exists an associated prime ideal of coheight $d$, i.e.~a $A$-submodule $N$ of $M$ with $\ell(N)=\omega^d$. Let $N'$ be the $GA$-submodule generated by $N$: it is generated by the $gN$ for $g\in G$ and therefore is, as an $A$-module, a quotient of $N^n$. So $\ell_d(N')\le n\ell_d(N)=n$.
By \cite[Lemma 7]{Cor}, $\ell_d$ is additive on exact sequences of modules of Krull dimension $\le d$. If $\ell(N')<\ell(M)$, the inequality to proves holds for both $N'$ and $M/N'$, so by additivity holds for $M$.

So suppose $\ell(M)\le\ell(N')$. Then $\ell_d(M)\le\ell_d(N')\le n$. So we just have to prove that $\ell_{G,d}(M)\ge 1$, or equivalently that $\ell_G(M)\ge\omega^d$. If $d=0$ this is obvious. Otherwise, for any integer $m$, $M$ has a submodule $L$ with $\omega^{d-1}\cdot mn\le\ell(L)<\omega^d$. Replacing $L$ by $\bigcap_{g\in G}gL$ if necessary, we can suppose that $L$ is $G$-invariant. By induction hypothesis, $\ell_{G,d-1}(M/L)\ge m$. So $\ell_G(M)\ge\omega^{d-1}\cdot m$. Since this holds for any $m$, we deduce that $\ell_G(M)\ge\omega^d$.
\endproof

\begin{cor}
Under the same hypotheses, if $\ell'(M)=\omega^d$, then $\ell'_G(M)=\omega^d$.\label{omegad}
\end{cor}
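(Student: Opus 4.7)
The plan is to translate the hypothesis $\ell'(M)=\omega^d$ into information about $\ell(M)$ itself. By the definition of $\ell'$ via left Euclidean division by $\omega$, we have $\ell(M)=\omega\cdot\omega^d+r=\omega^{d+1}+r$ with $r<\omega$, so $\ell(M)\in[\omega^{d+1},\omega^{d+1}+\omega)$. In particular, $M$ has Krull dimension $d+1$ and, in the leading-coefficient notation of the preceding proposition, $\ell_{d+1}(M)=1$.

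The inequality $\ell'_G(M)\le\ell'(M)=\omega^d$ is automatic, since every $GA$-submodule of $M$ is in particular an $A$-submodule: this shrinks the supremum defining $\ell_G(M)$ and yields $\ell_G(M)\le\ell(M)$, hence $\ell'_G(M)\le\ell'(M)$ after passing to the Euclidean quotient by $\omega$.

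For the reverse inequality, I would apply the preceding proposition at level $d+1$ (the proposition being valid whenever the exponent is at least the Krull dimension of $M$). The right-hand inequality $\ell_{d+1}(M)\le n\cdot\ell_{G,d+1}(M)$, combined with $\ell_{d+1}(M)=1$, forces $\ell_{G,d+1}(M)\ge 1$, so $\ell_G(M)\ge\omega^{d+1}$. The two bounds then squeeze $\ell_G(M)$ into $[\omega^{d+1},\omega^{d+1}+\omega)$, whence $\ell'_G(M)=\omega^d$, as desired.

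There is no serious obstacle: the substance is carried by the preceding proposition, and what remains is purely ordinal bookkeeping. The only point to watch is that one must invoke the proposition with exponent $d+1$, matching the actual Krull dimension of $M$ as read off from $\ell'(M)=\omega^d$; the fact that the leading coefficient is exactly $1$ is precisely what makes the factor $n$ in the proposition harmless in the squeeze.
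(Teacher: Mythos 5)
Your proof is correct and is the intended argument: the corollary is stated in the paper without proof precisely because it is this immediate squeeze from the preceding proposition, combining the trivial upper bound $\ell_G'(M)\le\ell'(M)$ with the lower bound extracted from $\ell_{d+1}(M)\le n\,\ell_{G,d+1}(M)$. You also correctly handle the only delicate point, namely that the hypothesis $\ell'(M)=\omega^d$ places the Krull dimension of $M$ at $d+1$, so the proposition must be invoked with degree index $d+1$ rather than the Krull dimension of $A$; since $\ell_{d+1}(M)=1$, the integer $\ell_{G,d+1}(M)$ is pinned to $1$, giving $\omega^{d+1}\le\ell_G(M)\le\ell(M)<\omega^{d+1}+\omega$ and hence $\ell_G'(M)=\omega^d$.
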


\begin{cor}
Let $G$ be a finite group, $A$ be a finitely generated domain of Krull dimension $d\ge 1$ with a $G$-action, and $I$ a non-zero $G$-invariant ideal in $A$. Then $\ell'_{G}(I)=\omega^{d-1}$.\label{ideal}
\end{cor}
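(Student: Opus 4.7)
The plan is to reduce to the non-equivariant computation $\ell_A(I)=\omega^d$, from which the $G$-equivariant statement will follow from the proposition preceding Corollary \ref{omegad}.

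First I would verify that $\ell_A(A)=\omega^d$. Since $A$ is a domain of Krull dimension $d$, every non-zero ideal $J$ yields a quotient $A/J$ of Krull dimension at most $d-1$, whence $\ell_A(A/J)<\omega^d$. By the inductive definition of $\ell$, this gives $\ell_A(A)\le\omega^d$, and the reverse inequality holds because $\dim(A)=d$; in other words, $A$ is a $d$-critical module.

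Next I would show $\ell_A(I)=\omega^d$. Pick $0\ne f\in I$: since $A$ is a domain, $a\mapsto af$ embeds $A$ into $I$, so by Lemma \ref{ext} applied to this inclusion, $\ell_A(I)\ge\ell_A(A)=\omega^d$. For the reverse inequality, apply Lemma \ref{ext} to the exact sequence $0\to I\to A\to A/I\to 0$:
\[
\ell_A(A/I)+\ell_A(I)\le\ell_A(A)=\omega^d.
\]
Since $A/I$ has Krull dimension at most $d-1$, we have $\ell_A(A/I)<\omega^d$; if $\ell_A(I)$ were $\omega^d+\beta$ with $\beta\ge 1$, then the left-hand side would equal $\omega^d+\beta>\omega^d$, a contradiction. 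Hence $\ell_A(I)=\omega^d$, so in particular $\ell'_A(I)=\omega^{d-1}$ and $\ell_d(I)=1$.

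Finally, $I$ is a $GA$-submodule of $A$, so the proposition preceding Corollary \ref{omegad} applies to $M=I$ with $n=|G|$: the estimate $1=\ell_d(I)\le n\cdot\ell_{G,d}(I)$ forces $\ell_{G,d}(I)\ge 1$, hence $\ell_G(I)\ge\omega^d$ and $\ell'_G(I)\ge\omega^{d-1}$. Combined with the trivial upper bound $\ell'_G(I)\le\ell'_A(I)=\omega^{d-1}$, we conclude $\ell'_G(I)=\omega^{d-1}$.

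The main obstacle is the equality $\ell_A(I)=\omega^d$: that a non-zero ideal in a domain has the \emph{same} length as the whole ring is a mildly surprising rigidity phenomenon, forced by the sandwich inequality of Lemma \ref{ext}. Once this is in place, the $G$-equivariance is essentially automatic, because $|G|$ being finite bounds the gap between $\ell_d$ and $\ell_{G,d}$.
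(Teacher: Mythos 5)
Your proof is correct and follows essentially the same route as the paper: reduce to the non-equivariant computation of $\ell'_A(I)$ and then transfer to the $G$-equivariant length via the inequality $\ell_d(I)\le |G|\cdot\ell_{G,d}(I)$. The paper simply cites Proposition~\ref{elc} for $\ell_A(I)=\omega^d$ (you reprove this directly using Lemma~\ref{ext}) and cites Corollary~\ref{omegad} for the transfer step (you reprove this inline from the preceding Proposition), but the underlying argument is identical.
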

\begin{proof}
In view of Corollary \ref{omegad}, it is enough to check that $\ell'(I)=\omega^{d-1}$, which is a particular case of Proposition \ref{elc}. 
\end{proof}

The following lemma is well-known.

\begin{lem}
Let $A$ be a Noetherian ring and $M$ a finitely generated non-Artinian $GA$-module. Then $M$ is residually Artinian as an $A$-module.\label{ra}\end{lem}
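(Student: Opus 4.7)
The plan is to show that for every nonzero $x \in M$, there is an $A$-submodule $N \subset M$ with $x \notin N$ and $M/N$ Artinian as an $A$-module; intersecting such $N$ over all nonzero $x$ then gives $0$, establishing residual Artinianness. The $G$-action is not needed for the $A$-module statement itself, but if a $GA$-variant is wanted (as when feeding into Lemma \ref{finite} or Proposition \ref{cblp}), one can replace $N$ by the finite intersection $N' = \bigcap_{g \in G} gN$, which is $GA$-stable, still avoids $x$ (up to starting with a translate that does), and still has Artinian $A$-quotient since $M/N' \hookrightarrow \prod_{g} M/gN$.

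To produce $N$: since $A$ is Noetherian and $M$ is finitely generated, $M$ is itself a Noetherian $A$-module. Zorn's lemma applied to the chain-closed family of $A$-submodules of $M$ not containing $x$ furnishes a maximal such $N$. By maximality, every $A$-submodule strictly containing $N$ contains $x$, so every nonzero $A$-submodule of $M/N$ contains $\bar x := x + N$ and therefore $A \bar x$. This forces $A\bar x$ to be a simple $A$-module: any nonzero proper submodule of $A\bar x$ would itself be a nonzero submodule of $M/N$ and thus would have to contain $A\bar x$, a contradiction. In the commutative setting relevant to this paper, write $A\bar x \cong A/\mathfrak{m}$ for some maximal ideal $\mathfrak{m}$ of $A$.

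Next I show $M/N$ is Artinian. For any associated prime $\mathfrak{q}$ of $M/N$, the embedding $A/\mathfrak{q} \hookrightarrow M/N$ has image containing the essential socle $A \bar x \cong A/\mathfrak{m}$, hence yields a nonzero $A$-linear map $A/\mathfrak{m} \hookrightarrow A/\mathfrak{q}$. Since $A/\mathfrak{q}$ is a domain, the annihilator of any nonzero submodule of $A/\mathfrak{q}$ equals $\mathfrak{q}$; comparing with the annihilator $\mathfrak{m}$ of $A/\mathfrak{m}$ forces $\mathfrak{q} = \mathfrak{m}$. Thus $\operatorname{Supp}(M/N) = \{\mathfrak{m}\}$, and by finite generation of $M/N$ some power $\mathfrak{m}^n$ annihilates $M/N$. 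The ring $A/\mathfrak{m}^n$ is Noetherian and local with nilpotent maximal ideal, hence of finite length, hence Artinian, and $M/N$, being finitely generated over $A/\mathfrak{m}^n$, is therefore Artinian as an $A$-module.

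The main technical step is the last one: passing from the maximality of $N$ (and the resulting simple essential submodule of $M/N$) to the Artinianness of $M/N$, via the associated prime/support analysis. This argument is entirely classical in the commutative setting, which suffices for all applications in this paper (where $A = \mathbf{Z}[Q]$ is commutative Noetherian). For non-commutative $A$ one would replace the associated-prime argument by a Krull-dimension (deviation) argument showing that $M/N$ has dimension zero, but this is not needed here.
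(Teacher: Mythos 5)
Your proof is correct, and it starts the same way as the paper's: take an $A$-submodule $N$ maximal among those avoiding $x_0$ (the paper then renames this $W$ and reduces to $W=\{0\}$), so that every nonzero $A$-submodule of $M/N$ contains $\bar x_0$. From there the two arguments genuinely diverge. The paper argues by contradiction: assuming $M/W$ is non-Artinian it picks a \emph{non-maximal} associated prime $P$ and an embedded copy of $A/P$, chooses a nonzero non-unit $a\in A/P$, and invokes the Krull intersection theorem (via Artin--Rees) to conclude $\bigcap_n a^n(A/P)=\{0\}$; since $x_0$ lies in every nonzero submodule this forces $a^n(A/P)=0$ for some $n$, contradicting that $A/P$ is a domain. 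You instead give a direct proof that $M/N$ is Artinian: $A\bar x_0$ is a simple essential submodule, hence $\cong A/\mathfrak m$; for any associated prime $\mathfrak q$ of $M/N$, the copy of $A/\mathfrak q$ inside $M/N$ must contain $A\bar x_0$, and comparing annihilators (using that $A/\mathfrak q$ is a domain) gives $\mathfrak q=\mathfrak m$; therefore $\operatorname{Supp}(M/N)=\{\mathfrak m\}$, $\mathfrak m^n$ annihilates $M/N$ for some $n$, and $M/N$ is a finitely generated module over the Artinian local ring $A/\mathfrak m^n$. Both arguments are sound and of comparable length. Yours avoids Artin--Rees entirely, replacing it with the more elementary annihilator comparison plus the ``Noetherian local with nilpotent maximal ideal is Artinian'' fact, and it yields a concrete positive description of the quotient ($\mathfrak m$-primary) rather than a contradiction; the paper's route via Krull intersection is perhaps slightly more economical on preliminaries if one already has Artin--Rees at hand. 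Both, as you note, are implicitly commutative, which suffices for every use of this lemma in the paper.
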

\proof
Pick a non-zero element $x_0$ in $M$.
Let $W$ be a maximal $A$-submodule of $M$ not containing $x_0$. We claim that $M/W$ is Artinian.

We can suppose that $W=\{0\}$, i.e.~that $x_0$ is contained in every non-zero $A$-submodule of $M$ and we have to prove that $M$ is Artinian.

If $M$ is non-Artinian, then it has an associate ideal $P$ such that $A/P$ is not a field. So $M$ contains an $A$-submodule $N$ isomorphic to $A/Q$. Pick a non-zero non-invertible element $a$ in $A/P$. By a standard application of Artin-Rees lemma, we have $\bigcap_{n>0} a^nN=\{0\}$. By the assumption on $x_0$, we get $a^nN=\{0\}$ for some $n$, i.e. $a^n(A/P)=0$, and therefore as $P$ is prime, we obtain $a\in P$, a contradiction.
\endproof

\begin{lem}
Let $G$ be a finite group, $A$ be a Noetherian ring with a $G$-action, and $M$ a finitely generated non-Artinian $GA$-module. Then $M$ is residually Artinian as a $GA$-module.\label{raG}\end{lem}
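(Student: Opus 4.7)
The plan is to upgrade Lemma \ref{ra} by averaging over the $G$-action. Concretely, given a nonzero $x_0\in M$, I want to exhibit a $GA$-submodule $W'$ of $M$ with $x_0\notin W'$ and $M/W'$ Artinian as a $GA$-module. Start by applying Lemma \ref{ra} to the $A$-module $M$: since $M$ is non-Artinian as an $A$-module (being non-Artinian as a $GA$-module, it cannot be Artinian as an $A$-module either), there exists an $A$-submodule $W\subset M$ with $x_0\notin W$ and $M/W$ Artinian as an $A$-module.

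Next, set $W'=\bigcap_{g\in G}gW$. First I would check that each $gW$ is again an $A$-submodule of $M$: for $a\in A$, $w\in W$, one has $a(gw)=g(g^{-1}(a)\cdot w)\in gW$, using the compatibility $g(bm)=(gb)(gm)$. Hence $W'$, a finite intersection of $A$-submodules, is an $A$-submodule, and it is clearly $G$-invariant, so it is a $GA$-submodule. Since $1\in G$, we have $W'\subset W$, so $x_0\notin W'$.

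The key remaining step is to show that $M/W'$ is Artinian as an $A$-module (from which Artinianness as a $GA$-module is automatic, since any descending chain of $GA$-submodules is in particular a descending chain of $A$-submodules, and hence stabilizes). For each $g\in G$, consider the semilinear bijection $\psi_g\colon M/gW\to M/W$ sending $m+gW$ to $g^{-1}(m)+W$; although $\psi_g$ is not $A$-linear, it satisfies $\psi_g(am+gW)=g^{-1}(a)\cdot\psi_g(m+gW)$, and since $g^{-1}\colon A\to A$ is a bijection, $\psi_g$ induces a bijection between the lattices of $A$-submodules of $M/gW$ and $M/W$. Therefore $M/gW$ is Artinian as an $A$-module. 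The canonical $A$-linear map
$$M/W'\;\hookrightarrow\;\prod_{g\in G}M/gW$$
has trivial kernel by definition of $W'$, so $M/W'$ embeds into a finite product of Artinian $A$-modules, which is Artinian. This finishes the proof.

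I do not anticipate a real obstacle here: the only subtle point is that $g$ does not act $A$-linearly on $M$, so one must be careful to distinguish the $A$-module structures on $M/W$ and $M/gW$ and argue via the semilinear isomorphism at the level of submodule lattices rather than pretend the two quotients are $A$-isomorphic.
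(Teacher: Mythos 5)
Your proof is correct and follows the same approach as the paper: apply Lemma \ref{ra}, intersect the resulting submodule over its $G$-orbit, and embed the quotient into a finite product of Artinian $A$-modules. The semilinear-bijection argument showing $M/gW$ is Artinian is a detail the paper leaves implicit, but it is precisely the right justification.
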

\proof
Pick a non-zero element $x_0$ in $M$. Then there exists by Lemma \ref{ra} an $A$-submodule $N$ of $M$ such that $x_0\notin N$ and $M/N$ is Artinian.

If $N'=\bigcap_{g\in G}gN$, then $M/N'$ embeds into $\prod_{g\in G} M/gN$, so is Artinian as well. Moreover, $N'$ is a $GA$-submodule and $x_0\notin N'$.
\endproof

\begin{prop}
Let $G$ be a finite group, $A$ be a finitely generated ring with a $G$-action, and $M$ a finitely generated $GA$-module. 
Then the Cantor-Bendixson rank of $M$ as a $GA$-module, i.e.~the Cantor-Bendixson rank of $\{0\}$ in the set of $GA$-submodules of $M$, is $\ell'_G(M)$. 
\end{prop}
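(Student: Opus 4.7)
The plan is to adapt the proof of Proposition \ref{cblp} to the module setting, with ``residually finite'' replaced by ``residually Artinian'' (Lemma \ref{raG}) and ``finite'' by ``of finite length.'' Under the hypothesis that $A$ is a finitely generated ring, $A$ is Noetherian (Hilbert's basis theorem) and Jacobson (Nullstellensatz), so every finite-length $A$-module is finite as a set. The main subtlety I expect is a topological identification: writing $\mathcal{N}_{GA}(M)$ for the set of $GA$-submodules of $M$, we have $\CB_{\mathcal{N}_{GA}(M)}(N)=\CB_{\mathcal{N}_{GA}(M/N)}(\{0\})$ for each $N\in\mathcal{N}_{GA}(M)$. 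Since $M$ is Noetherian, every such $N$ is generated by a finite set $\{n_1,\dots,n_r\}$; then $\{P\in\mathcal{N}_{GA}(M):n_i\in P\text{ for all }i\}=\{P:P\supseteq N\}$ is a clopen neighborhood of $N$ in the Chabauty topology, homeomorphic via $P\mapsto P/N$ to $\mathcal{N}_{GA}(M/N)$, which yields the identification.

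I then argue by induction on $\alpha=\ell'_G(M)$. In the base case $\alpha=0$: $\ell_G(M)<\omega$, so by the comparison $\ell_A(M)\le|G|\cdot\ell_G(M)$ (a consequence of the leading-term bounds of the earlier proposition in this section), $M$ has finite $A$-length and hence is finite as a set by the Jacobson property. Therefore $\mathcal{N}_{GA}(M)$ is finite and discrete, and $\{0\}$ is isolated.

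For the induction step with $\alpha\ge 1$, let $W=E_{GA}(M)$ be the largest finite-length $GA$-submodule (well-defined by $GA$-max-n). By the same argument $W$ is finite as a set, so $V=\{N:N\cap W=\{0\}\}$ is a basic Chabauty-open neighborhood of $\{0\}$. For any $N\in V$ with $N\neq\{0\}$: $N$ is not contained in $W$, so by maximality of $W$, $N$ has infinite length, hence $\ell'_G(N)\ge 1$. Lemma \ref{extp} gives $\ell'_G(M/N)<\alpha$, and the induction hypothesis together with the topological identification yields $\CB(N)<\alpha$. Thus $V\cap\mathcal{N}_{GA}(M)^{(\alpha)}\subseteq\{\{0\}\}$, so $\CB(\{0\})\le\alpha$.

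For the lower bound $\CB(\{0\})\ge\alpha$, fix $\beta<\alpha$ and a basic neighborhood $U_F=\{N:N\cap F=\emptyset\}$ of $\{0\}$ with $F\subset M\setminus\{0\}$ finite; I produce $N\in U_F\setminus\{\{0\}\}$ with $\CB(N)\ge\beta$. By the inductive characterization of $\ell'_G$ combined with Lemma \ref{convex}, choose $N_0$ with $\ell_G(N_0)\ge\omega$ and $\ell'_G(M/N_0)=\beta$. By Lemma \ref{raG}, for each $x\in F$ there is $L_x$ with $x\notin L_x$ and $M/L_x$ Artinian; set $L=\bigcap_{x\in F}L_x$, so $L\cap F=\emptyset$ and $M/L\hookrightarrow\prod_x M/L_x$ is Artinian. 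Let $N=L\cap N_0$: then $N\cap F=\emptyset$; the inclusion $N_0/N\hookrightarrow M/L$ shows $N_0/N$ is Artinian, so Lemma \ref{quotfini} gives $\ell'_G(M/N)=\ell'_G(M/N_0)=\beta$, and $N\neq\{0\}$ since $N_0$ is non-Artinian. By induction and the identification, $\CB(N)=\beta$, completing the proof.
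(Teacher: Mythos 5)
Your proof is correct, and it rests on exactly the same ingredients as the paper's: Lemma~\ref{raG}, the Nullstellensatz fact that a finitely generated Artinian $A$-module is finite, and the structure of Proposition~\ref{cblp}. However, once you have made the observation (which you do make) that ``Artinian finitely generated $A$-module'' is the same as ``finite,'' Lemma~\ref{raG} already shows that $M$ and all its $GA$-quotients are residually finite as $GA$-modules, so the hypotheses of Proposition~\ref{cblp} hold verbatim and the paper simply invokes it; re-deriving the whole induction (base case, upper bound via $E_{GA}(M)$, lower bound via residual finiteness and Lemma~\ref{quotfini}) is correct but redundant.
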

\proof
Any Artinian finitely generated $A$-module is finite: this is a classical consequence of the Nullstellensatz (see for instance \cite[Lemma 13]{Cor}). Therefore, using Lemma \ref{raG}, $M$ is residually finite as a $GA$-module. So the proposition appears as a particular case of Proposition \ref{cblp}.
\endproof


\section{The Bieri-Strebel invariant and tensor products}\label{bs}

In all this section, we consider a finitely generated metabelian group $G$, inside an extension
$$1\to M\to G\to Q\to 1,$$
with $Q$ abelian and $M$ abelian. So $M$ is a finitely generated $\mathbf{Z}[Q]$-module.

If $v\in\Hom(Q,\mathbf{R})$, we set $Q_v=\{q\in Q|v(q)\ge 0\}$. We set
$$\Gamma(M)=\{v\in \textnormal{Hom}(Q,\mathbf{R})|M\text{ is not a finitely generated $Q_v$-module}\}\cup\{0\}.$$
This is a closed subset \cite[Proposition 2.2]{BS} of the vector space $\Hom(Q,\mathbf{R})$, and is further studied in \cite{BS2}.
We write $\Gamma^{\pm}(M)=\Gamma(M)\cap(-\Gamma(M))$.

\begin{thm}[Bieri-Strebel \cite{BS}]
The finitely generated metabelian group $G$ is finitely presented if and only if $$\Gamma^\pm(M)=\{0\}.$$\label{bist}
\end{thm}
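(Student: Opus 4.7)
The plan is to reproduce the essentials of Bieri and Strebel's original proof, which splits naturally into two implications.

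\textbf{Necessity.} Suppose $G = \langle x_1, \dots, x_n \mid r_1, \dots, r_m\rangle$ is finitely presented. I would first adjust generators so that $x_1,\dots,x_d$ lift a basis of $Q$ modulo torsion while the remaining generators $x_{d+1},\dots,x_n$ lie in $M$. Each relator $r_j$, after reduction using commutativity of $M$, becomes a $\mathbf{Z}[Q]$-linear relation on the images of $x_{d+1},\dots,x_n$, giving a finite set $R$ of module relations which already presents $M$ as a $\mathbf{Z}[Q]$-module. Now take any $v \in \Hom(Q,\mathbf{R}) \setminus \{0\}$. The finitely many $Q$-supports of the elements of $R$ occupy a bounded region in the $v$-direction; by a direct combinatorial pushing argument along the character $v$, the relations in $R$ allow each element of $M$ to be rewritten using generators whose supports lie in either $Q_v$ or $Q_{-v}$. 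It follows that $M$ is finitely generated as a module over $\mathbf{Z}[Q_v]$ or over $\mathbf{Z}[Q_{-v}]$, i.e.\ $v \notin \Gamma(M)$ or $-v \notin \Gamma(M)$, whence $\Gamma^\pm(M) = \{0\}$.

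\textbf{Sufficiency.} Under the hypothesis, the open subset $\Sigma(M) := \Hom(Q,\mathbf{R}) \setminus \Gamma(M)$ of the dual space satisfies $\Sigma(M) \cup (-\Sigma(M)) \supseteq \Hom(Q,\mathbf{R}) \setminus\{0\}$. Descending to the unit sphere $S^{d-1} \subset \Hom(Q,\mathbf{R})$ and using compactness, I would extract finitely many characters $v_1,\dots,v_k$ such that $M$ is finitely generated over each $\mathbf{Z}[Q_{v_i}]$ and such that the open half-spaces $\{v : v_i(\cdot) > 0\}$ already cover $S^{d-1}$. Fix a common finite subset $S \subset M$ containing $\mathbf{Z}[Q_{v_i}]$-generators for every $i$. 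I would then write down a finite presentation of $G$ whose generators are lifts of a basis of $Q$ together with $S$, and whose relations encode the abelian structure of $Q$, the commutativity of $M$, and, for each $i$, the finitely many $\mathbf{Z}[Q_{v_i}]$-module relations expressing how $Q_{-v_i}$-translates of elements of $S$ can be rewritten over $Q_{v_i}$.

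\textbf{Main obstacle.} The delicate point is to verify, in the sufficiency direction, that this explicit finite list of relations really does present $G$. The key ingredient is an amalgamation-type lemma in the spirit of HNN-extensions: if $M$ is finitely generated as a module over both $\mathbf{Z}[Q_v]$ and $\mathbf{Z}[Q_{-v}]$, then $M$ admits a finite $\mathbf{Z}[Q]$-module presentation obtained by gluing the two half-space presentations along their intersection (a module over $\mathbf{Z}[\ker v \cap Q]$). Iterating this amalgamation over the finite open cover of $S^{d-1}$ extracted above, and translating each module-theoretic gluing into finitely many group-theoretic relators, yields the required finite presentation of $G$. This inductive splitting/gluing argument is the real content of the theorem and the step where one must genuinely use that $Q$ has finite rank.
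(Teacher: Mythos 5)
This statement is not proved in the paper: it is attributed explicitly to Bieri and Strebel and cited as Theorem~2.2/5.3 of \cite{BS}. The paper quotes it as an external black box and then uses it (via Lemma~\ref{vmagique}, Corollary~\ref{cortensor}, Corollary~\ref{fpanneau}, and Lemma~\ref{bigsubgroup}) to establish finite presentability of the groups $H_d$, $\Gamma_d$, $\Lambda_d$, etc. So there is no proof in the paper for your attempt to be compared against.

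On its own merits, your sketch correctly identifies the two directions and the right objects, but it stops short of a proof precisely at the points where the theorem is hard. In the necessity direction, ``a direct combinatorial pushing argument along the character $v$'' is a placeholder for the actual content; Bieri--Strebel's argument is not a straightforward rewriting of module relators but passes through the finite generation of the relation module and a genuine valuation-theoretic analysis of supports, and the dichotomy ``$v\notin\Gamma(M)$ or $-v\notin\Gamma(M)$'' needs to be established relator by relator with some care. In the sufficiency direction, the ``amalgamation-type lemma'' you invoke is not actually how Bieri--Strebel proceed: their construction of a finite presentation when $\Gamma^\pm(M)=\{0\}$ is an induction on the torsion-free rank of $Q$, using a carefully chosen HNN-decomposition along a single rational character $v$ with $v,-v\notin\Gamma(M)$, together with the Bieri--Strebel theory of tame modules to control the base and associated subgroups; covering the sphere by several half-spaces and gluing as you describe does not obviously produce a group presentation of $G$ (the half-space modules are not submodules of $M$ you can amalgamate over, and the Higman-embedding-style bookkeeping you would need is nontrivial). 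You identify this as the main obstacle, and you are right that it is exactly the step where the real work lives; but as written it remains a gap, not an argument. If you want to fill it in, the right route is the rank-one HNN induction from \cite{BS} rather than a multi-chart amalgamation over $S^{d-1}$.
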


\begin{lem}
Fix $v\in \textnormal{Hom}(Q,\mathbf{R})-\{0\}$. Let $V$ be the $Q_v$-submodule generated by some finite generating subset of the $Q$-module $M$. Then we have the equivalences
\begin{itemize}
\item $v\notin\Gamma(M)$;
\item $qV=V$ for every $q\in Q$;
\item $qV\subset V$ for some $q$ with $v(q)<0$.
\end{itemize}\label{vmagique}
\end{lem}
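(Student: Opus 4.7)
My plan is to first establish the structural observation that $\{qV:q\in Q\}$ is a directed family whose union is $M$, and then chain the equivalences through (2).

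The key preliminary fact is this: if $q,q'\in Q$ satisfy $v(q)\ge v(q')$, then $q'^{-1}q\in Q_v$, and since $V$ is $Q_v$-stable and $Q$ is abelian we get $qV=q'(q'^{-1}q)V\subset q'V$. Thus the family $\{qV\}$ is directed under inclusion (as $v(q)$ decreases, $qV$ grows). Because $V$ contains the finite generating set $X$ of $M$ as a $\mathbf{Z}[Q]$-module, we have $M=\mathbf{Z}[Q]\cdot X=\bigcup_{q\in Q}qV$, a directed union.

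From here (2) $\Rightarrow$ (3) is trivial (pick any $q$ with $v(q)<0$, which exists because $v\neq 0$), and the equivalence (1) $\Leftrightarrow$ (2) is easy. For (2) $\Rightarrow$ (1): if $qV=V$ for every $q\in Q$, then $V$ is $Q$-stable, so $V\supset\mathbf{Z}[Q]\cdot X=M$, hence $V=M$, hence $M=\mathbf{Z}[Q_v]\cdot X$ is finitely generated as a $Q_v$-module, so $v\notin\Gamma(M)$. For (1) $\Rightarrow$ (2): given a finite generating set $Y$ for $M$ as a $Q_v$-module, the directedness of $\{qV\}$ combined with finiteness of $Y$ yields a single $q_0\in Q$ with $Y\subset q_0V$; then $M=\mathbf{Z}[Q_v]\cdot Y\subset q_0V$, so $M=q_0V$, equivalently $V=q_0^{-1}M=M$, giving $qV=V$ for all $q$.

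The main step is (3) $\Rightarrow$ (2). Suppose $q_0V\subset V$ for some $q_0$ with $v(q_0)<0$. Since $v(q_0^{-1})=-v(q_0)>0$ we have $q_0^{-1}\in Q_v$, so $q_0^{-1}V\subset V$, i.e.\ $V\subset q_0V$. Combined with $q_0V\subset V$ this gives $q_0V=V$, and by iteration $q_0^nV=V$ for every $n\in\mathbf{Z}$. Now fix any $q'\in Q$. Since $v(q_0)<0$, we can pick $n\ge 0$ large enough so that both $v(q_0^{-n}q')\ge 0$ and $v(q_0^{-n}q'^{-1})\ge 0$, i.e.\ $q_0^{-n}q'$ and $q_0^{-n}q'^{-1}$ both lie in $Q_v$. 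Then $q_0^{-n}q'V\subset V$ gives $q'V\subset q_0^nV=V$, and symmetrically $q'^{-1}V\subset V$, i.e.\ $V\subset q'V$. Hence $q'V=V$, proving (2). The only delicate point is bookkeeping the signs of $v(q_0^{\pm n}q'^{\pm 1})$; everything else is direct from the directed union.
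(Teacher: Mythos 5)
Your proof is correct. The overall structure (establish $(1)\Leftrightarrow(2)$, then $(2)\Leftrightarrow(3)$) matches the paper, and your proof of $(3)\Rightarrow(2)$ is essentially the paper's semigroup argument made explicit: the paper observes that $\{q:qV\subset V\}$ is a subsemigroup containing $Q_v$ and the given $q$ with $v(q)<0$, hence is all of $Q$; you reach the same conclusion by tracking $v(q_0^{-n}q'^{\pm 1})$ directly. The genuine difference is in $(1)\Rightarrow(2)$. The paper argues by contraposition: if $qV\neq V$ for some $q$, one may arrange $v(q)>0$, and then $(q^{-n}V)_n$ is a strictly increasing chain of $Q_v$-submodules of $M$, so $M$ fails to be noetherian over $\mathbf{Z}[Q_v]$ --- and since $\mathbf{Z}[Q_v]$ is a noetherian ring (a fact the paper only highlights later, in the proof of Lemma~\ref{bigsubgroup}), this contradicts finite generation. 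You instead prove the implication directly, via the observation that the family $\{qV\}_{q\in Q}$ is a chain under inclusion (with $qV\subset q'V$ when $v(q)\ge v(q')$) whose union is $M$; a finite $Q_v$-generating set of $M$ must then land inside a single $q_0V$, forcing $M=q_0V$ and hence $V=M$. Your route avoids invoking noetherianity of $\mathbf{Z}[Q_v]$ and replaces an infinite-chain/contrapositive argument by a short direct one, which is a modest but real simplification. The directed-union observation is also a pleasant structural fact worth isolating.
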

\begin{proof}
Let us first check that the two first assertions are equivalent.
Suppose that $qV\neq V$ for some $q\in Q$. Replacing $q$ by $q^{-1}$ if necessary we can suppose that $v(q)\ge 0$. So $qV\subset V$, and we deduce that the sequence $(q^{-n}V)$ of $Q_v$-submodules of $M$ is strictly increasing, so that $M$ is not noetherian, hence not finitely generated as a $Q_v$-module, i.e. $v\in\Gamma(V)$.

Conversely the assumption $qV=V$ for all $q\in Q$ clearly implies that $V$ is a $Q$-module, hence $V=M$, so $M$ is a finitely generated $Q_v$-module.

The second assertion clearly implies the third, and the converse holds because the set of $q$ satisfying $qV\subset V$ is a subsemigroup, and clearly $Q$ is generated as a subsemigroup by $Q_v\subset\{q\}$ whenever $q\notin Q_v$. So $qV\subset V$ for all $q\in Q$, and multiplying by $q^{-1}$ we get $V\subset q^{-1}V$ for all $q\in Q$, so as $Q$ is closed under inversion, $qV=V$ for all $q\in Q$.
\end{proof}

Suppose that $Q=Q_1\times Q_2$, and let $A_i$ be the ring generated by $Q_i$. Suppose $M=M_1\otimes_\mathbf{Z} M_2$, where $M_i$ is a finitely generated $A_i$-module, and $M$ is naturally viewed as a $Q$-module. We have the identification $\Hom(Q,\mathbf{R})=\Hom(Q_1,\mathbf{R})\times\Hom(Q_2,\mathbf{R})$.

\begin{lem}
We have the inclusion
$$\Gamma(M)\subset\Gamma(M_1)\times\Gamma(M_2)$$
\end{lem}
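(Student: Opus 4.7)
The plan is to prove the contrapositive: if $v=(v_1,v_2)$ satisfies $v_1\notin\Gamma(M_1)$ (the symmetric case $v_2\notin\Gamma(M_2)$ is handled identically), then $v\notin\Gamma(M)$. Note that $v_1\notin\Gamma(M_1)$ automatically forces $v_1\neq 0$, hence $v\neq 0$. The main tool is Lemma \ref{vmagique}, invoked in two directions: first to extract a useful ``negative-$v_1$'' witness for $M_1$, then to verify the third equivalent criterion for $M$.

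First I align generators. Pick $x_1,\dots,x_n$ generating $M_1$ as a $Q_1$-module and $y_1,\dots,y_m$ generating $M_2$ as a $Q_2$-module; then $\{x_i\otimes y_j\}_{i,j}$ generates $M=M_1\otimes_{\mathbf{Z}}M_2$ as a $Q$-module, because any decomposable tensor expands as $(\sum_i a_ix_i)\otimes(\sum_j b_jy_j)=\sum_{i,j}(a_i\otimes b_j)(x_i\otimes y_j)$ with $a_i\otimes b_j\in\mathbf{Z}[Q_1]\otimes_{\mathbf{Z}}\mathbf{Z}[Q_2]=\mathbf{Z}[Q]$. Let $V:=\sum_{i,j}\mathbf{Z}[Q_v](x_i\otimes y_j)$, the $Q_v$-submodule to which Lemma \ref{vmagique} will be applied.

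Next, applying Lemma \ref{vmagique} to $(M_1,v_1)$, I obtain $q_1^*\in Q_1$ with $v_1(q_1^*)<0$ and $q_1^*V_1\subset V_1$, where $V_1:=\sum_i\mathbf{Z}[(Q_1)_{v_1}]x_i$. Concretely, $q_1^*x_i=\sum_k p_{i,k}x_k$ with coefficients $p_{i,k}\in\mathbf{Z}[(Q_1)_{v_1}]$. I set $q:=(q_1^*,1)\in Q$, which satisfies $v(q)=v_1(q_1^*)<0$.

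The core verification is $qV\subset V$. Since $Q$ is abelian, for any $z=\sum_s c_s(x_{i_s}\otimes y_{j_s})$ with $c_s\in\mathbf{Z}[Q_v]$ one has $qz=\sum_s c_s\cdot q(x_{i_s}\otimes y_{j_s})$, so it suffices to show $q(x_i\otimes y_j)\in V$ for each $i,j$. Computing,
$$q(x_i\otimes y_j)=(q_1^*x_i)\otimes y_j=\sum_k(p_{i,k}\otimes 1)(x_k\otimes y_j),$$
and each $p_{i,k}\otimes 1$ lies in $\mathbf{Z}[(Q_1)_{v_1}\times\{1\}]\subset\mathbf{Z}[Q_v]$, because $(q_1,1)$ has $v$-value $v_1(q_1)\ge 0$ whenever $q_1\in(Q_1)_{v_1}$ --- and this holds regardless of what $v_2$ is. Hence $qV\subset V$, and Lemma \ref{vmagique} delivers $v\notin\Gamma(M)$. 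The only subtle observation is the inclusion $(Q_1)_{v_1}\times\{1\}\subset Q_v$ valid for arbitrary $v_2$, which is what lets the argument run with no hypothesis on $v_2$; everything else is bookkeeping with the tensor decomposition.
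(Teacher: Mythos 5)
Your proof is correct and follows essentially the same route as the paper's: prove the contrapositive by applying Lemma \ref{vmagique} to $(M_1,v_1)$, then use the witness $q=(q_1^*,1)$ with $v(q)<0$ to show the $Q_v$-submodule generated by the tensor products of generators is $q$-stable. The only cosmetic difference is that you expand everything in explicit coordinates $(p_{i,k})$, while the paper works with $V_1\otimes V_2$ directly; these generate the same $Q_v$-submodule, so the arguments coincide.
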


\begin{proof}
Suppose that $(v_1,v_2)\notin\Gamma(M_1)\times\Gamma(M_2)$, say $v_1\notin\Gamma(M_1)$. Consider $V_i\subset M_i$ as in Lemma \ref{vmagique}. So there exists $q_1\in Q_1$ with $v_1(q_1)<0$ and $qV_1=V_1$. So $v(q_1,1)=v_1(q_1)<0$ and $(q_1,1)(V_1\otimes V_2)\subset (qV_1\otimes V_2)=V_1\otimes V_2$. So the $Q_v$-submodule $V$ generated by $V_1\otimes V_2$ is $(q_1,1)$-stable, hence it is a $Q$-submodule, so $v\notin\Gamma(M)$.
\end{proof}

\begin{cor}
If $\Gamma^\pm(M_1)=\{0\}$ and $\Gamma^\pm(M_2)=\{0\}$ then $\Gamma^\pm(M)=\{0\}$.\label{cortensor}
\end{cor}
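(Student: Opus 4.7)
The plan is to derive Corollary \ref{cortensor} as an immediate consequence of the preceding lemma, using only the definition $\Gamma^\pm(N) = \Gamma(N) \cap (-\Gamma(N))$ together with the functoriality of negation on $\Hom(Q,\mathbf{R}) = \Hom(Q_1,\mathbf{R}) \times \Hom(Q_2,\mathbf{R})$.

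Concretely, I would take an arbitrary $v = (v_1, v_2) \in \Gamma^\pm(M)$. By definition, both $v$ and $-v$ lie in $\Gamma(M)$. Applying the inclusion $\Gamma(M) \subset \Gamma(M_1) \times \Gamma(M_2)$ to each of these, I obtain $v_1 \in \Gamma(M_1)$, $v_2 \in \Gamma(M_2)$, and also $-v_1 \in \Gamma(M_1)$, $-v_2 \in \Gamma(M_2)$. This means $v_1 \in \Gamma^\pm(M_1) = \{0\}$ and $v_2 \in \Gamma^\pm(M_2) = \{0\}$, so $v = 0$. Since $0 \in \Gamma^\pm(M)$ holds trivially, we conclude $\Gamma^\pm(M) = \{0\}$.

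There is really no obstacle here; the previous lemma (which is where the actual content lies, via the explicit construction of the $Q_v$-stable submodule $V_1 \otimes V_2$ from the one-sided witness $q_1$) does all of the work. The only thing to check is that the decomposition $\Hom(Q,\mathbf{R}) = \Hom(Q_1,\mathbf{R}) \times \Hom(Q_2,\mathbf{R})$ is compatible with negation, which is immediate. Thus the corollary is a two-line deduction, and combined with Theorem \ref{bist}, it gives the finite presentability of tensor-product-like metabelian groups (such as those used to build the $H_d$ via iterated tensoring in Section \ref{sectionvmg}).
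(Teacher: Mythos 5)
Your proof is correct and is essentially the derivation the paper intends: the corollary is stated without proof precisely because it follows immediately from the inclusion $\Gamma(M)\subset\Gamma(M_1)\times\Gamma(M_2)$ applied to both $v$ and $-v$, together with compatibility of the product decomposition with negation, exactly as you write.
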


Here is a classical example (below the coefficient ring $\mathbf{Z}$ can be replaced by $\mathbf{Z}/k\mathbf{Z}$).
\begin{lem}
Suppose that $A=M=\mathbf{Z}[u,(u+u^2)^{-1}]$, $Q=\mathbf{Z}^2$ acts by $(m,n)\cdot P(u)=P(u)u^m(1+u)^n$. Then $\Gamma(M)=\mathbf{R}_+(1,0)\cup\mathbf{R}_+(0,1)\cup\mathbf{R}_+(-1,-1)$. In particular, $\Gamma^\pm(M)=\{0\}$.
\end{lem}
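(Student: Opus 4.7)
I apply Lemma~\ref{vmagique} to the cyclic $\mathbf{Z}[Q]$-module $M$ with generator $1$; the test submodule $V$ is then the $\mathbf{Z}$-span of $\{u^m(1+u)^n:(m,n)\in Q_v\}$, which is in fact a subring of $M$ since $Q_v$ is a sub-semigroup of $Q$. The argument splits into two halves: a valuative obstruction placing the three asserted rays in $\Gamma(M)$, and a symmetry reduction combined with an explicit computation showing $V=M$ on the complement.

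To put the three rays in $\Gamma(M)$, I view $M\subset\mathbf{Q}(u)$ and use the discrete valuations $v_0,v_{-1},v_\infty$ at $0,-1,\infty$, which on monomials read $v_0(u^m(1+u)^n)=m$, $v_{-1}(u^m(1+u)^n)=n$, $v_\infty(u^m(1+u)^n)=-m-n$. For $v=(1,0)$, every monomial generator of $V$ has $v_0\ge 0$, hence $v_0(V)\ge 0$ by the ultrametric property. For any $q=(m_0,n_0)$ with $v(q)=m_0<0$, the element $q\cdot 1\in qV$ has $v_0=m_0<0$, so $qV\not\subset V$; by Lemma~\ref{vmagique}, $v\in\Gamma(M)$. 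The same argument with $v_{-1}$ (resp.~$v_\infty$) handles $v=(0,1)$ (resp.~$v=(-1,-1)$), and $\mathbf{R}_+$-homogeneity of $\Gamma(M)$ delivers the three full rays.

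For the complement, the ring automorphisms $\sigma:u\mapsto-1-u$ and $\tau:u\mapsto u^{-1}$ of $M$ intertwine the $Q$-action with the linear automorphisms $(m,n)\mapsto(n,m)$ and $(m,n)\mapsto(-m-n,n)$ of $Q$, respectively, and therefore induce on $\Hom(Q,\mathbf{R})$ the involutions $(a,b)\mapsto(b,a)$ and $(a,b)\mapsto(-a,b-a)$. These generate an $S_3$-action preserving $\Gamma(M)$ that permutes both the three rays and the three open complementary sectors transitively. It therefore suffices to verify $V=M$ for $v=(a,b)$ with $a,b>0$. In that sector $(1,0),(0,1)\in Q_v$, so $u,1+u\in V$; choosing integers $n\ge a/b$ and $m\ge b/a$ puts $(-1,n)$ and $(m,-1)$ in $Q_v$, so $u^{-1}(1+u)^n$ and $u^m(1+u)^{-1}$ also lie in $V$. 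Expanding these binomially (using $u=(1+u)-1$ for the second) exhibits $u^{-1}$ and $(1+u)^{-1}$ as differences of these elements and polynomials in non-negative powers of $u$, resp.~$1+u$, already contained in $V$. Hence $V\supset\mathbf{Z}[u^{\pm 1},(1+u)^{\pm 1}]=M$, and Lemma~\ref{vmagique} gives $v\notin\Gamma(M)$.

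The only real difficulty is this final computation, driven by the identity $(1+u)-u=1$, which allows inverses of $u$ and $1+u$ to be extracted modulo non-negative-power junk already in $V$. Finally, the three rays in $\Gamma(M)$ are pairwise non-antipodal, so $\Gamma^\pm(M)=\{0\}$.
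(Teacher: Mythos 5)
Your proof is correct and follows the same overall strategy as the paper's: exploit the symmetry of $M$ to cut down the work, then verify membership and non-membership in $\Gamma(M)$ by explicit computation with the submodule $V$ generated by $1$. Two points of departure are worth flagging. First, to place the three rays in $\Gamma(M)$, the paper treats only $v=(0,1)$ (observing $\mathbf{Z}[Q_v]=\mathbf{Z}[u,u^{-1}]$, over which $M$ is visibly not finitely generated) and then rotates to the other two rays by symmetry; you instead use the three discrete valuations $v_0,v_{-1},v_\infty$ of $\mathbf{Q}(u)$ to bound $V$ from below and thereby obstruct $qV\subset V$ directly. This is a cleaner and more uniform argument, handling all three rays at once without any appeal to symmetry, and it makes transparent why exactly these three rays appear (they correspond to the three punctures of the curve $\operatorname{Spec} M$). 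Second, for the symmetry reduction on the complementary open sectors, the paper uses the single order-three automorphism $u\mapsto -(1+u)/u$; you use the two involutions $u\mapsto -1-u$ and $u\mapsto u^{-1}$ generating the full $S_3$, of which the paper's rotation is the composite. Either suffices, and the induced linear actions on $\Hom(Q,\mathbf{R})$ you wrote down ($(a,b)\mapsto(b,a)$ and $(a,b)\mapsto(-a,b-a)$) are correct. The closing computation on the open first quadrant --- extracting $u$, $1+u$, $u^{-1}$, $(1+u)^{-1}$ from $V$ via $(1,0),(0,1),(-1,n),(m,-1)\in Q_v$ and the identity $(1+u)-u=1$ --- is essentially the same as the paper's.
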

\begin{proof}
First observe that $A=M$ has a ring automorphism of order three given by $u\mapsto -(1+u)/u\mapsto -1/(u+1)\mapsto u$. This implies that $\Gamma(M)$ is invariant under the matrix $\begin{pmatrix}-1 & 1 \\ -1 & 0\end{pmatrix}$ of order three, which rotates $(0,1)\mapsto (1,0)\mapsto (-1,-1)$. So it is enough to check that $(0,1)$ belongs to $\Gamma(M)$, but not $(a,b)$ if $a,b>0$.

If $v(m,n)=n$, then $\mathbf{Z}[Q_v]$ consists exactly of $\mathbf{Z}[u,u^{-1}]$. So $(0,1)\in\Gamma(M)$.

Consider $v(m,n)=am+bn$ with $a,b>0$.
Let $V$ be the $\mathbf{Z}[Q_v]$-submodule of $\mathbf{Z}[Q]$ generated by $1$. This is clearly a ring, so we just have to check that it contains $u,u^{-1},(1+u)^{-1}$.
Since $a\ge 0$, $u\in V$. Therefore $\mathbf{Z}[u]\subset V$.
Since $a>0$, we know that $v(n,-1)>0$ for large $n$, so $V$ contains $(-u)^n/(1+u)$ for large $n$, which can be written as $(1-(1+u))^n/(1+u)=P_1(u)+1/(1+u)$ with $P_1(u)\in\mathbf{Z}[u]$. So $1/(1+u)\in V$. As $b>0$, $V$ contains $(1+u)^n/u$ for large $n$. Since we can write $(1+u)^n/u=P_2(u)+1/u$ with $P_2(u)\in\mathbf{Z}[u]$, we deduce that $V$ contains $u^{-1}$.
\end{proof}

As $M$ is the tensor product of $k$ copies of $\mathbf{Z}[u,(u^2+u)^{-1}]$, from Corollary \ref{cortensor} we get
\begin{cor}\label{fpanneau}
If $M=A=\mathbf{Z}[u_1,\dots,u_k,s^{-1}]$ where $s=\prod_{i=1}^k(u_i^2+u_i)$, then $\Gamma^\pm(M)=\{0\}$.
\end{cor}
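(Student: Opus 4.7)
The plan is to reduce the statement directly to the two previous results. The ring $A=\mathbf{Z}[u_1,\dots,u_k,s^{-1}]$, with $s=\prod_i(u_i^2+u_i)$, factors as a tensor product over $\mathbf{Z}$:
\[
A \;\cong\; B_1 \otimes_{\mathbf{Z}} B_2 \otimes_{\mathbf{Z}} \cdots \otimes_{\mathbf{Z}} B_k, \qquad B_i := \mathbf{Z}[u_i,(u_i^2+u_i)^{-1}],
\]
because inverting the product $s$ is the same as inverting each $u_i^2+u_i$ separately, and the $u_i$ are algebraically independent. Correspondingly, $Q = \mathbf{Z}^{2k}$ splits as $Q_1\times\cdots\times Q_k$ with each $Q_i=\mathbf{Z}^2$ acting on $B_i$ exactly by the formula $(m,n)\cdot P(u_i)=P(u_i)u_i^m(1+u_i)^n$ of the preceding lemma, while $Q_j$ acts trivially on $B_i$ for $j\neq i$.

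First I would apply the preceding lemma to each individual factor to obtain $\Gamma^\pm(B_i)=\{0\}$ for $i=1,\dots,k$. Then I would invoke Corollary~\ref{cortensor} inductively on $k$: grouping $A = (B_1\otimes\cdots\otimes B_{k-1})\otimes_{\mathbf{Z}} B_k$ as a tensor product of two modules over $Q_1\cdots Q_{k-1}$ and $Q_k$ respectively, the inductive hypothesis gives $\Gamma^\pm(B_1\otimes\cdots\otimes B_{k-1})=\{0\}$ and the preceding lemma gives $\Gamma^\pm(B_k)=\{0\}$, so Corollary~\ref{cortensor} yields $\Gamma^\pm(A)=\{0\}$.

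There is essentially no obstacle: the only thing to be a little careful about is that Corollary~\ref{cortensor} is stated for a product of two factors, which is precisely why an elementary induction on $k$ is needed, and that the tensor product decomposition is compatible with the splitting of $Q$ (so that the ambient $Q$-module structure on $A$ really is the one coming from the external tensor product, which is clear here since the actions on distinct variables commute). Once these identifications are made, the conclusion is immediate from the two quoted results.
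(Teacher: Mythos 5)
Your argument is exactly the one the paper uses: the ring decomposes as a $k$-fold tensor product of copies of $\mathbf{Z}[u,(u^2+u)^{-1}]$, and Corollary~\ref{cortensor} (applied inductively, since it is stated for two factors) reduces the claim to the preceding lemma on a single factor. The paper states this in one sentence without spelling out the induction, but the content is identical.
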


We will also need the following easy consequence of Theorem \ref{bist}.

\begin{lem}
Let $G$ be a finitely presented metabelian group in an exact sequence
$$1\to M\to G\to Q\to 1$$
with $M$ and $Q$ abelian. Let $H$ be a subgroup of $G$ whose projection on $Q$ is surjective (i.e. $HM=G$). Then $H$ is finitely presented as well.\label{bigsubgroup}
\end{lem}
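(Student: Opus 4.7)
The strategy is to apply the Bieri-Strebel criterion (Theorem \ref{bist}) to the subgroup $H$. Setting $N := H \cap M$, the natural map $H \to G/M = Q$ has kernel $N$ and is surjective because $HM = G$, so $H$ sits in an extension $1 \to N \to H \to Q \to 1$ with $N, Q$ abelian; conjugation by $H$ factors through $Q = H/N$ and agrees with the ambient $Q$-action on $M$, so $N$ is naturally a $\mathbf{Z}[Q]$-submodule of $M$.

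The first step is to verify that $H$ is finitely generated. Since $Q$ is finitely generated abelian, $\mathbf{Z}[Q]$ is Noetherian, so the submodule $N$ of the finitely generated $\mathbf{Z}[Q]$-module $M$ is itself finitely generated; combining a $\mathbf{Z}[Q]$-generating set of $N$ with finitely many lifts in $H$ of generators of $Q$ yields a finite generating set of $H$.

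By Theorem \ref{bist} applied to the extension $1 \to N \to H \to Q \to 1$, it then remains to show $\Gamma^\pm(N) = \{0\}$. The hypothesis that $G$ is finitely presented gives $\Gamma^\pm(M) = \{0\}$, so it suffices to establish $\Gamma(N) \subseteq \Gamma(M)$. For a nonzero rational $v \in \Hom(Q,\mathbf{R})$ with $v \notin \Gamma(M)$, the monoid $Q_v$ is finitely generated (Gordan's lemma), so $\mathbf{Z}[Q_v]$ is a Noetherian ring; since $M$ is finitely generated over $\mathbf{Z}[Q_v]$, the submodule $N$ is also finitely generated over $\mathbf{Z}[Q_v]$, giving $v \notin \Gamma(N)$. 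One can rephrase this in the style of Lemma \ref{vmagique}: from a $q_0 \in Q$ with $v(q_0)<0$ and a finitely generated $q_0$-stable $Q_v$-submodule $V \subseteq M$ generating $M$, the ascending chain $L_K := \mathbf{Z}[Q_v]\cdot\{q_0^\ell n_j : 0 \le \ell \le K,\, j\}$ built from $\mathbf{Z}[Q]$-generators of $N$ stabilizes (by Noetherianness of $\mathbf{Z}[Q_v]$), providing the desired $q_0$-stable submodule of $N$.

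The main obstacle is the case of irrational $v$: then $Q_v$ can fail to be finitely generated as a monoid (for instance $Q = \mathbf{Z}^2$ with $v(a,b) = a + \sqrt{2}\,b$), so $\mathbf{Z}[Q_v]$ need not be Noetherian and the chain argument above breaks. I would resolve this either by invoking the Bieri-Strebel short exact sequence formula $\Gamma(M) = \Gamma(N) \cup \Gamma(M/N)$ for finitely generated $\mathbf{Z}[Q]$-modules from \cite{BS}, which directly yields $\Gamma(N) \subseteq \Gamma(M)$ in full generality, or by combining closedness of $\Gamma(N)$ and $\Gamma(M)$ with the rational polyhedrality of these invariants (Bieri-Groves) to reduce the inclusion to the rational directions already handled.
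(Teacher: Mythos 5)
Your approach is the same as the paper's: apply the Bieri-Strebel criterion (Theorem \ref{bist}) to the extension $1\to M\cap H\to H\to Q\to 1$, reducing finite presentability of $H$ to the inclusion $\Gamma(M\cap H)\subseteq\Gamma(M)$. Where you differ is in how this inclusion is justified --- and here you are in fact more careful than the paper. The paper disposes of the inclusion with a parenthetical claim that the rings $\mathbf{Z}[Q_v]$ are Noetherian ``as localizations of polynomial rings, although they may be infinitely generated.'' As you observe, this fails for irrational $v$: the monoid $Q_v$ is then not finitely generated, and in fact $\mathbf{Z}[Q_v]$ is not Noetherian; for instance the ideal generated by all monomials $u^q$ with $v(q)>0$ cannot be finitely generated when $v(Q)$ is dense in $\mathbf{R}$, since any finite set of generators would have $v$-values bounded below by some $\epsilon>0$. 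So the paper's stated justification is literally correct only in rational directions, which is exactly the gap you flag. Both of your proposed repairs are sound: the additivity $\Gamma(A)=\Gamma(A')\cup\Gamma(A'')$ on short exact sequences of finitely generated $\mathbf{Z}[Q]$-modules is a basic result of Bieri and Strebel and gives the inclusion directly; alternatively, since $\Gamma(M)$ and $\Gamma(M\cap H)$ are closed rational polyhedral cones (Bieri-Groves), each is the closure of its rational points, so your Gordan-lemma argument for rational $v$ extends to all $v$ by density and closedness. Either repair completes the proof.
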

\begin{proof}
By assumption we have an exact sequence
$$1\to M\cap H\to H\to Q\to 1.$$
So $M\cap H$ is a $Q$-submodule of $M$, hence is finitely generated as a $Q$-module, so $H$ is finitely generated. Next, we see that $\Gamma(M\cap H)\subset \Gamma(H)$ (this uses the fact that the rings $\mathbf{Z} Q_v$ implied in the definition of the Bieri-Strebel invariant, are noetherian, as localizations of polynomial rings, although they may be infinitely generated). So Theorem \ref{bist} implies that $H$ is finitely presented.
\end{proof}


\section{Free and split metabelian groups}\label{fsme}

\subsection{Free metabelian groups}\label{frme}

Let $\mathbf{FM}_d=\langle x_1,\dots,x_d\rangle$ denote the free metabelian group on $d$ generators. 
Consider the extension $$ 1 \to M\to \mathbf{FM}_d\to Q\to 1$$
with $Q\simeq\mathbf{Z}^d$ and $M=[\mathbf{FM}_d,\mathbf{FM}_d]$.

\begin{prop}\label{fmd}
As a $\mathbf{Z}[Q]$-module, $M$ is torsion-free of rank $d-1$.
\end{prop}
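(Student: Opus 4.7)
The plan is to realize $M$ as a kernel in an explicit short exact sequence of $\mathbf{Z}[Q]$-modules, using the classical Magnus embedding, and then to extract torsion-freeness and the rank by elementary commutative algebra, exploiting the fact that $\mathbf{Z}[Q] = \mathbf{Z}[t_1^{\pm 1},\dots,t_d^{\pm 1}]$ is an integral domain.

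First I would set up the Magnus embedding. Consider the semidirect product $W_d = \mathbf{Z}[Q]^d \rtimes Q$, where $Q = \langle t_1,\dots,t_d\rangle \simeq \mathbf{Z}^d$ acts diagonally on the free $\mathbf{Z}[Q]$-module $\mathbf{Z}[Q]^d = \bigoplus_{i=1}^d \mathbf{Z}[Q] e_i$. Since both $\mathbf{Z}[Q]^d$ and $Q$ are abelian, $W_d$ is metabelian, so the universal property of $\mathbf{FM}_d$ produces a homomorphism $\psi: \mathbf{FM}_d \to W_d$ defined by $\psi(x_i) = (e_i, t_i)$. By Magnus's theorem, $\psi$ is injective. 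Since $\psi$ is compatible with the projections onto $Q$, it restricts to a $Q$-equivariant injection $\psi\vert_M : M \hookrightarrow \mathbf{Z}[Q]^d$, which is automatically $\mathbf{Z}[Q]$-linear for the conjugation action of $Q$ on $M$.

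Next I would identify the image. A direct commutator computation in $W_d$ gives
$$\psi([x_i,x_j]) = (t_i - 1)e_j - (t_j - 1)e_i,$$
and more generally every commutator maps into the kernel of the $\mathbf{Z}[Q]$-linear map
$$\varphi: \mathbf{Z}[Q]^d \longrightarrow \mathbf{Z}[Q], \qquad \varphi(e_i) = t_i - 1,$$
whose image is the augmentation ideal $I_Q = \ker(\mathbf{Z}[Q] \to \mathbf{Z})$. The precise form of Magnus's theorem then identifies $\psi(M)$ with the entire kernel $\ker\varphi$, yielding the short exact sequence of $\mathbf{Z}[Q]$-modules
$$0 \longrightarrow M \longrightarrow \mathbf{Z}[Q]^d \overset{\varphi}{\longrightarrow} I_Q \longrightarrow 0.$$

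From here the proposition follows immediately. The Laurent polynomial ring $\mathbf{Z}[Q]$ is an integral domain, so $\mathbf{Z}[Q]^d$ is torsion-free, and hence so is its submodule $M$. The ideal $I_Q$ is nonzero in a domain, so it has rank $1$; additivity of rank for torsion-free modules over a domain in the above exact sequence then gives $\mathrm{rank}_{\mathbf{Z}[Q]}(M) = d - 1$.

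The only non-trivial input is the Magnus embedding theorem, and that is the sole real obstacle — everything else is a direct commutator calculation plus two elementary facts about modules over the integral domain $\mathbf{Z}[Q]$. The bracket identity above supplies half of the image description (containment in $\ker\varphi$); the opposite containment is the content of the classical result.
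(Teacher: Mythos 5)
Your proof is correct and uses essentially the same main ingredient as the paper, namely the Magnus embedding $\psi:\mathbf{FM}_d\hookrightarrow\mathbf{Z}[Q]^d\rtimes Q$, which gives both torsion-freeness (since $M$ then embeds into the free module $\mathbf{Z}[Q]^d$) and the containment $\psi(M)\subset\ker\varphi$, where $\varphi(e_i)=t_i-1$. Where you differ is in the rank computation. You invoke the identification of $\psi(M)$ with the \emph{entire} kernel of $\varphi$ --- that is, the relation-module exact sequence $0\to R/[R,R]\to\mathbf{Z}[Q]^d\to I_Q\to 0$ --- and read off $\mathrm{rank}(M)=d-1$ by additivity. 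The paper uses only the containment $\psi(M)\subset\ker\varphi$, which gives the upper bound $\mathrm{rank}(M)\le d-1$, and then supplies the lower bound by exhibiting a $\mathbf{Z}[Q]$-free family of $d-1$ explicit commutator images $u_1u_j[x_1,x_j]=(1-u_j)e_1-(1-u_1)e_j$, $j=2,\dots,d$. So your route is slicker and proves a little more (it determines the exact image), at the cost of importing a stronger classical result; the paper makes do with less. One attribution caveat: calling the exact image identification ``the precise form of Magnus's theorem'' is loose. Magnus (1939) established injectivity of the embedding; the identification $\psi(R/[R,R])=\ker\varphi$ is the content of the relation-module sequence, usually credited to Gasch\"utz, Lyndon, and Gruenberg. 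It is a true, standard theorem, so your argument is sound, but it is not a corollary of bare injectivity --- the paper's explicit lower bound is precisely the device for avoiding that stronger input.
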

\begin{proof}
The {\it Magnus embedding} of $\mathbf{FM}_d$ is the following. We consider matrices
$$\begin{pmatrix}
  t & m  \\
  0 & 1  \\
  \end{pmatrix}$$
with $t\in Q$ and $m$ in $N$, the free $\mathbf{Z}[Q]$-module of rank $d$ with basis $(e_i)$.
The Magnus embedding $i$ is given by
$$x_i\mapsto \begin{pmatrix}
  u_i & e_i  \\
  0 & 1  \\
  \end{pmatrix}.$$
This is a well-defined map whose injectivity is due to Magnus \cite{Mag}. In particular, $M$ embeds as a $\mathbf{Z}[Q]$-module into $N$, so is torsion-free. Let $r$ denote its rank.
Denote by $N_0$ the $A$-submodule of $N$ consisting of all $\sum a_ie_i$ $(a_i\in A)$ satisfying $\sum_i(1-u_i)a_i=0$.

\begin{lem}
We have $M\subset N_0$. In particular $r\le d-1$.\label{inters}
\end{lem}
\begin{proof}
Write $[x,y]=x^{-1}y^{-1}xy$ and $x^y=y^{-1}xy$. For convenience we identify $\mathbf{FM}_d$ to its image by $i$. It is enough to prove that $[t,v]\in N_0$ for all $t,v\in \mathbf{FM}_d$.

We have, in any group, the equality $[t_1t_2,v]=[t_1,v]^{t_2}.[t_2,v]$. As $N_0$ is an $A$-submodule, it follows that for every $v$, the set of $t$ such that
$[t,v]\in N_0$ is closed under multiplication, and similarly it is closed under inversion, hence is a subgroup. The analog fixing $t$ also holds. So it is enough to check $[t,v]\in N_0$ for $t,v$ ranging over group generators. A computation gives
$$[x_i,x_j]=u_i^{-1}u_j^{-1}((1-u_j)e_i-(1-u_i)e_j),$$
which belongs to $N_0$.
\end{proof}

Besides, we have $r\ge d-1$. Indeed, as we just mentioned, for $j>1$ we have $$u_iu_j[x_1,x_j]=(1-u_j)e_1-(1-u_1)e_j;$$
this is a $\mathbf{Z}[Q]$-free family of cardinality $d-1$.
\end{proof}

\begin{thm}
For every $d\ge 2$ we have
$$\ell'(\mathbf{FM}_d)=\omega^{d}\cdot (d-1).$$
Moreover, for any proper quotient $G$ of $\mathbf{FM}_d$, we have $\ell'(G)<\omega^{d}\cdot (d-1)$.\label{fm}
\end{thm}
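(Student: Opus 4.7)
The plan is to combine Proposition \ref{fmd} with the length machinery of Section \ref{length}. By Proposition \ref{fmd}, $M=[\mathbf{FM}_d,\mathbf{FM}_d]$ is a torsion-free $\mathbf{Z}[Q]$-module of rank $d-1$; since $\mathbf{Z}[Q]=\mathbf{Z}[u_1^{\pm 1},\dots,u_d^{\pm 1}]$ is a Noetherian domain of Krull dimension $d+1$, Proposition \ref{elc} gives $\ell_Q(M)=\omega^{d+1}\cdot(d-1)$ and hence $\ell'_Q(M)=\omega^d\cdot(d-1)$.

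To deduce the equality $\ell'(\mathbf{FM}_d)=\omega^d\cdot(d-1)$, I would verify the hypotheses of Corollary \ref{owncentralizer}. First, $M$ is its own centralizer in $\mathbf{FM}_d$: if $g$ centralizes $M$ and projects to $q\in Q$, then $q-1\in\mathbf{Z}[Q]$ annihilates $M$, and since $M$ is torsion-free of positive rank over the domain $\mathbf{Z}[Q]$, this forces $q=1$ and $g\in M$. Second, $W(M)=\{0\}$, because every nonzero $\mathbf{Z}[Q]$-submodule of $M$ is torsion-free of positive rank, hence of Krull dimension $d+1\ge 3$. Corollary \ref{owncentralizer} then yields $\ell'(\mathbf{FM}_d)=\ell'_Q(M)=\omega^d\cdot(d-1)$.

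For the second assertion, let $K$ be a nontrivial normal subgroup of $\mathbf{FM}_d$ and set $G=\mathbf{FM}_d/K$. The key step is to show $K\cap M\neq\{1\}$: if $K\cap M=\{1\}$, then for every $k\in K$ and $m\in M$ the commutator $[m,k]$ lies in $K$ by normality and in $M$ because $p([m,k])=0$ ($Q$ being abelian), hence in $M\cap K=\{1\}$; so $k$ centralizes $M$, whence $k\in M$ by the self-centralizing property, and thus $K\subset M\cap K=\{1\}$, a contradiction. Given that $K\cap M$ is a nonzero $\mathbf{Z}[Q]$-submodule of $M$, the definition of $\ell'$ (together with $\ell_Q(N)\ge\omega^{d+1}\ge\omega$ for any nonzero submodule $N\subset M$) gives $\ell'_Q(M/(K\cap M))<\omega^d\cdot(d-1)$. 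Writing $M'=M/(K\cap M)$ and $Q'=Q/p(K)$, the $G$-action on $M'$ factors through $Q'$, so $\ell'_G(M')=\ell'_Q(M')$; and $\ell'(Q')\le d<\omega$ by Proposition \ref{hirsch}, since $Q'$ is finitely generated abelian. Lemma \ref{extp} applied to the extension $1\to M'\to G\to Q'\to 1$ then gives $\ell'(G)\le\ell'(Q')\oplus\ell'_G(M')<\omega^d\cdot(d-1)$: indeed, for $d\ge 2$ every ordinal strictly less than $\omega^d\cdot(d-1)$ has Cantor-normal-form $\omega^d$-coefficient at most $d-2$, and natural sum with a finite ordinal only modifies the $\omega^0$-coefficient, preserving that bound.

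The main obstacle, as I see it, is isolating the self-centralizing property of $M$ in $\mathbf{FM}_d$: it underpins both the application of Corollary \ref{owncentralizer} for the first equality and the commutator argument forcing $K\cap M\neq\{1\}$ for any nontrivial normal $K$. Once that property is in hand, the rest reduces to a direct bookkeeping with the ordinal lengths introduced in Section \ref{length}.
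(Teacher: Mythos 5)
Your argument for the equality $\ell'(\mathbf{FM}_d)=\omega^d\cdot(d-1)$ follows the paper's approach exactly: Proposition \ref{fmd} identifies $M$ as torsion-free of rank $d-1$ over the domain $\mathbf{Z}[Q]$ of Krull dimension $d+1$, Proposition \ref{elc} gives $\ell'_Q(M)=\omega^d\cdot(d-1)$, and Corollary \ref{owncentralizer} transfers this to $\ell'(\mathbf{FM}_d)$. You verify the two hypotheses of that corollary (self-centralizing $M$, and $W(M)=\{0\}$) in more detail than the paper does, but both facts are exactly what the paper tacitly uses.

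For the ``moreover'' clause on proper quotients, the paper's proof actually stops after the first equality and gives no explicit argument; your proposal supplies one, and it is correct. The pivotal step --- that a nontrivial normal subgroup $K$ must meet $M$ nontrivially, deduced from $M$ being self-centralizing via the commutator trick $[m,k]\in K\cap M$ --- is the right observation. From there, $K\cap M$ is a nonzero $\mathbf{Z}[Q]$-submodule with $\ell_Q(K\cap M)\ge\omega$, so the defining property of $\ell'$ gives $\ell'_Q(M/(K\cap M))<\omega^d\cdot(d-1)$, and Lemma \ref{extp} applied to $1\to M'\to G\to Q'\to 1$ together with $\ell'(Q')<\omega$ closes the argument; your Cantor-normal-form remark (natural sum with a finite ordinal only affects the $\omega^0$-coefficient, so the $\omega^d$-coefficient stays $\le d-2$) is exactly what is needed. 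In short, your proof reproduces the paper's reasoning on the first statement and correctly fills in the unwritten proof of the second.
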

\begin{proof}
As the Krull dimension of $\mathbf{Z}[Q]$ is $\ge 2$ and $M$ is torsion-free as a $\mathbf{Z}[Q]$-module, Corollary \ref{owncentralizer} implies that $\ell'(\mathbf{FM}_d)=\ell'(M)$, and $\ell'(M)=\omega^d\cdot (d-1)$ by Propositions \ref{fmd} and \ref{elc}.
\end{proof}

In view of Proposition \ref{cblp}, Theorem \ref{fm} implies Theorem \ref{thmbounds}(\ref{melib}).


\subsection{Split metabelian groups}\label{splitm}

In this section, we prove Theorem \ref{thmbounds}(\ref{itemsplit}).

\begin{prop}
Fix $d\ge 1$. Let $G$ be a $d$-generated metabelian group in a {\em split} exact sequence
$$1\to M\to G\to Q\to 1.$$
Then $\ell'(G)<\omega^{d}$.
\end{prop}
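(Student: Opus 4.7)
The plan is to reduce the estimate on $\ell'(G)$ to a bound on the Krull dimension of $M$ as a $\mathbf{Z}[Q]$-module, and then to use that the extension splits in order to squeeze out the extra unit in the dimension that the generic bound of Theorem~\ref{thmbounds}(\ref{in1}) would give.

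First I would observe that, since $M$ is abelian, the conjugation action of $G$ on $M$ factors through $Q$, whence $\ell'_G(M) = \ell'_Q(M) = \ell'_{\mathbf{Z}[Q]}(M)$; I shall write $\ell'(M)$ for this common value. Lemma~\ref{extp} then gives $\ell'(G) \le \ell'(Q) \oplus \ell'(M)$. Since $Q$ is a finitely generated abelian group, in particular supersolvable, Proposition~\ref{hirsch} yields $\ell'(Q) = h(Q) < \omega$, and the introductory bound recalled for $\ell'$ says $\ell'(M) < \omega^{\dim M}$. Because $\omega^d$ is closed under natural sum with finite ordinals, the whole proposition reduces to establishing $\dim_{\mathbf{Z}[Q]} M \le d$.

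Next I would unpack the abelianization using the splitting. A short commutator computation in $M \rtimes Q$ (the key identity being $[(m_1,q_1),(m_2,q_2)] = (q_1-1)m_2 - (q_2-1)m_1 \in M$) shows that $[G,G]=IM$, where $I$ is the augmentation ideal of $\mathbf{Z}[Q]$. The splitting then gives an identification of abelian groups $G^{\mathrm{ab}} = Q \oplus M/IM$. Since $G$ is $d$-generated, $G^{\mathrm{ab}}$ has $\mathbf{Z}$-rank at most $d$; writing $r$ and $s$ for the $\mathbf{Z}$-ranks of $Q$ and of $M/IM$ respectively, this reads $r+s \le d$. Since $\mathbf{Z}[Q]$ has Krull dimension $r+1$, we immediately have $\dim M \le r+1$, which yields $\dim M \le d$ in every case \emph{except} $s=0$ and $r=d$.

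This last case is the step I expect to be the main obstacle. My proposal is: a finitely generated abelian group that is $d$-generated and has rank $d$ is free on $d$ generators (a straightforward argument by reduction modulo primes), so $G^{\mathrm{ab}}\cong \mathbf{Z}^d$, forcing $Q = \mathbf{Z}^d$ and $M/IM=0$. The relation $IM=M$ with $M$ finitely generated lets me apply Nakayama's lemma in its determinant form to produce $x\in 1+I$ with $xM=0$; since $x$ has augmentation~$1$ it is nonzero, and $\mathbf{Z}[Q]\cong \mathbf{Z}[\mathbf{Z}^d]$ is a Noetherian catenary domain of Krull dimension $d+1$, so by Krull's principal ideal theorem $\dim \mathbf{Z}[Q]/(x) = d$, and hence $\dim M\le d$. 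Once this is in hand, combining with the first paragraph gives $\ell'(G) < \omega^d$ as required.
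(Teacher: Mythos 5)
Your proposal is correct and follows essentially the same route as the paper: reduce, via the rank of $Q$, to the case where $Q\cong\mathbf{Z}^d$ and $M=[G,G]$; use the splitting to show that $M=IM$ for the augmentation ideal $I$; then apply Nakayama's lemma. The only notable (and slight) difference is at the final step: the paper first passes to $M$ modulo its torsion submodule and derives the contradiction $M=\{0\}$ from torsion-freeness, whereas you keep $M$ as is, extract an element $x\in 1+I$ annihilating $M$ by the determinant trick, and invoke Krull's Hauptidealsatz to conclude directly that $\dim M\le d$. Your variant is a bit more streamlined, avoiding both the reduction modulo torsion and the proof-by-contradiction framing, but the mathematical content is the same.
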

\begin{proof}Assume that $\ell'(G)\ge\omega^d$. If the $\mathbf{Q}$-rank of $Q$ is less than $d$, then $\ell'(G)<\omega^{d}$ holds (even if the exact sequence is not split). So $Q$ is free abelian of rank $d$; in particular, $M=[G,G]$. For the same reason, the Krull dimension of $M$ has to be equal to $d+1$. Modding out by its torsion submodule, we can assume that $M$ is a nonzero torsion-free $\mathbf{Z}[Q]$-module.

Now, given a splitting, write the generators as $m_ie_i$, with $e_i\in Q$ ($i$th basis vector) and $m_i\in M$. The argument in the proof of Lemma \ref{inters} shows that $[G,G]$ is contained in the $Q$-submodule generated by the elements
$$(1-u_j)m_i-(1-u_i)m_j,$$
where $u_i$ is the indeterminate in $\mathbf{Z}[Q]$ corresponding to $e_i$. Since $[G,G]=M$, we deduce that $M=IM$, where $I$ is the ideal generated by all $1-u_i$. As this is a proper ideal and $M$ is torsion-free finitely generated, Nakayama's Lemma implies that $M=\{0\}$, a contradiction.
\end{proof}

\begin{rem}
This upper bound works more generally for the slightly broader class of finitely generated (metabelian) groups having two abelian subgroups $Q,M$ with $M$ normal, such that $G=MQ$. This class has the additional advantage to be stable under quotients, and any element $G=MQ$ in this class is actually a quotient of a finitely generated split metabelian group, namely $M\rtimes Q$.
\end{rem}

Let us now prove that the bound given in Theorem \ref{thmbounds}(\ref{itemsplit}) is sharp. Continue with $Q$ free of rank $d$ as above, assume $d\ge 2$, and define the ring
$$A_n=\mathbf{Z}[Q]/(2-x_2)^n$$

Consider the semidirect product $G_n=A_n\rtimes Q$. Define $m_i\in A_n$ with $m_1=1$, $m_2=0$, and any $m_i$ for $i\ge 3$.

\begin{lem}\label{gndg}
The group $G_n$ is generated by the family $(m_ie_i)_{1\le i\le d}$;
\end{lem}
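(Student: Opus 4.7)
The plan is to let $H$ denote the subgroup of $G_n$ generated by $(m_ie_i)_{1\le i\le d}$, and then show $H=G_n$ by verifying two things: (a) the projection $G_n\to Q$ maps $H$ onto $Q$; and (b) $H\cap A_n = A_n$. Once both hold, any $(b,q)\in G_n$ factors as $(b-a,1)(a,q)$ for some $(a,q)\in H$ lifting $q$, so $H=G_n$. Point (a) is immediate since the generators project to $e_1,\dots,e_d$, which generate $Q$. Everything interesting is in (b).

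To establish (b), the plan is to produce inside $H\cap A_n$ a single element that is a \emph{unit} of the ring $A_n$, then sweep $A_n$ out using the $Q$-action realized by conjugation. The special choices $m_1=1$ and $m_2=0$ are designed precisely so that the commutator of the first two generators collapses to a clean expression. A direct semidirect product computation (formally the same calculation that underlies the Magnus-type identity used in Lemma~\ref{inters}) will give
$$[\,1\cdot e_1,\; 0\cdot e_2\,] \;=\; x_1^{-1}x_2^{-1}(1-x_2) \;\in\; A_n,$$
where $x_i\in A_n$ denotes the image of $e_i$.

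The substantive point I will then have to check is that $1-x_2$ is invertible in $A_n=\mathbf{Z}[Q]/(2-x_2)^n$. The trick is to write $1-x_2 = -\bigl(1-(2-x_2)\bigr)$; since $(2-x_2)^n=0$, the element $2-x_2$ is nilpotent in $A_n$, so $\sum_{k=0}^{n-1}(2-x_2)^k$ is an explicit inverse of $1-(2-x_2)$. Combined with the fact that $x_1,x_2$ are units (as elements of $Q$ mapped into $A_n$), this shows that the commutator computed above is a unit $u\in A_n^{\times}$ lying in $H\cap A_n$.

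Finally, using (a), for every $q\in Q$ there is a lift of $q$ in $H$, and conjugating $(u,1)\in H$ by this lift produces $(q\cdot u,1)\in H$. Hence $H\cap A_n$ contains every $Q$-translate of $u$, and therefore the full $\mathbf{Z}[Q]$-submodule they span, which is $u\cdot A_n=A_n$ since $u$ is a unit. This proves (b), hence $H=G_n$. The only nontrivial step is the invertibility of $1-x_2$ modulo $(2-x_2)^n$; the rest is bookkeeping in the semidirect product. The values of $m_i$ for $i\ge 3$ are irrelevant to the argument, their presence serving only to make the projection of $H$ onto $Q$ surjective when $d>2$.
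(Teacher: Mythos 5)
Your proof is correct and follows essentially the same path as the paper: compute the commutator $[m_1e_1,m_2e_2]=x_1^{-1}x_2^{-1}(1-x_2)$, observe that $1-x_2$ (hence the commutator) is a unit because $2-x_2$ is nilpotent in $A_n$, then conclude $H\cap A_n=A_n$ using the $Q$-action realized by conjugation (the paper phrases this last step as ``$H\cap M$ is an ideal containing a unit''). The only cosmetic difference is that the paper multiplies the commutator by $x_1x_2$ inside the ideal to get the cleaner element $1-u_2$, while you keep the unit as is; both amount to the same observation.
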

\begin{proof}
Let $H$ be the group generated by this family, and set $N=H\cap M$, which is an ideal of $A_n$. It contains in particular $u_1u_2[m_1e_1,m_2e_2]=(1-u_2)$. As $1+(1-u_2)$ is nilpotent by construction, $1-u_2$ is invertible (using a formal series), so $N$ contains the element $1$ of $H$, hence $N=A_n$. Therefore $H=G_n$.
\end{proof}

\begin{lem}\label{lprian}
We have $\ell'(A_n)=\omega^{d-1}\cdot n$.
\end{lem}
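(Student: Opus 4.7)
The plan is to induct on $n$ by peeling off a copy of $A_1$ at each step. Since $\mathbf{Z}[Q]$ is a domain and $2-x_2$ is a non-zerodivisor, multiplication by $2-x_2$ induces an injection $A_{n-1}\hookrightarrow A_n$ whose image is the ideal $(2-x_2)A_n$, and the quotient by this ideal is $A_1$. So for $n\ge 2$ I have a short exact sequence of $\mathbf{Z}[Q]$-modules
$$0 \to A_{n-1} \to A_n \to A_1 \to 0,$$
and Lemma \ref{extp} sandwiches
$$\ell'(A_{n-1})+\ell'(A_1)\;\le\;\ell'(A_n)\;\le\;\ell'(A_{n-1})\oplus\ell'(A_1).$$
Granted the formula for $A_1$, both extremes collapse to $\omega^{d-1}\cdot n$: the Cantor normal forms of $\omega^{d-1}\cdot(n-1)$ and $\omega^{d-1}$ involve only the exponent $d-1$, so ordinary sum and natural sum agree, and the induction step is complete.

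For the base case $n=1$, I would identify $A_1 = \mathbf{Z}[Q]/(2-x_2)$ with the Laurent ring $\mathbf{Z}[1/2][x_1^{\pm 1},x_3^{\pm 1},\dots,x_d^{\pm 1}]$ via the substitution $x_2\mapsto 2$: this is legal because $x_2$ is a unit of $\mathbf{Z}[Q]$, forcing $2$ to become invertible in the quotient. In particular $(2-x_2)$ is a prime ideal of $\mathbf{Z}[Q]$ and the quotient is a domain of Krull dimension $d$. Viewing $A_1$ as a torsion-free rank-one module over itself, Proposition \ref{elc} applies (the hypothesis $\alpha\ge 1$ is satisfied since $d\ge 2$) and gives $\ell_{\mathbf{Z}[Q]}(A_1)=\omega^d$; the Euclidean-division definition of $\ell'$ then yields $\ell'(A_1)=\omega^{d-1}$, closing the induction.

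The only real content is the base case, and even there the only small verification is the explicit identification of $A_1$ with the Laurent ring over $\mathbf{Z}[1/2]$. Once this is done, the whole computation reduces to a single application of Proposition \ref{elc} followed by $n-1$ applications of Lemma \ref{extp} coupled with trivial ordinal arithmetic.
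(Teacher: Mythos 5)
Your proposal is correct and is essentially the paper's argument in a lightly repackaged form: the paper considers the filtration of $A_n$ by the ideals $(2-x_2)^kA_n$ with each successive subfactor isomorphic to $A_1\cong\mathbf{Z}[1/2][x_1^{\pm 1},x_3^{\pm 1},\dots,x_d^{\pm 1}]$ and then applies Proposition \ref{elc} and Lemma \ref{extp}, whereas you induct on $n$ by peeling off one copy of $A_1$ at a time, which amounts to the same filtration. (Incidentally, your phrasing ``since $\mathbf{Z}[Q]$ is a domain'' is the accurate justification for the isomorphisms between subfactors; the paper's ``as $A_n$ is a domain'' is a slip, since $A_n$ is not a domain for $n\ge 2$.)
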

\begin{proof}
Using Lemma \ref{extp}, it is enough to check that $$\ell'\left((2-x_2)^kA_n/(2-x_2)^{k+1}A_n\right)=\omega^{d-1}.$$ As $A_n$ is a domain, the $\mathbf{Z}[Q]$-module $(2-x_2)^kA_n/(2-x_2)^{k+1}A_n$ is isomorphic to $A_n/(2-x_2)A_n$, which is the domain of Laurent polynomials in $d-1$ variables over $\mathbf{Z}[1/2]$, so by Proposition \ref{elc}, $\ell'(A_n/(2-x_2)A_n)=\omega^{d-1}$ as expected.
\end{proof}

From Lemma \ref{gndg}, Lemma \ref{lprian} and Corollary \ref{owncentralizer}, we deduce

\begin{prop}
For every $d\ge 2$, the split metabelian group $G_n$ is $d$-generated and $\ell'(G_n)=\omega^{d-1}\cdot n$.\qed
\end{prop}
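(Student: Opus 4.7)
The plan is to deduce this directly from the three immediately preceding results. The first claim, that $G_n$ is $d$-generated, is already the content of Lemma \ref{gndg}. For the length equality $\ell'(G_n)=\omega^{d-1}\cdot n$, I would apply Corollary \ref{owncentralizer} with trivial semigroup $H$ to the split extension $1\to A_n\to G_n\to Q\to 1$, which would reduce $\ell'(G_n)$ to $\ell'_Q(A_n)$, and Lemma \ref{lprian} already evaluates the latter to $\omega^{d-1}\cdot n$. Note that $\ell'(Q)<\omega$ by Proposition \ref{hirsch}, so the running hypothesis of that corollary is satisfied, and since $A_n$ is abelian its self-conjugation is trivial, so $G_n$-invariant subgroups of $A_n$ coincide with $\mathbf{Z}[Q]$-submodules, letting us identify the group-theoretic $W_{G_n}(A_n)$ with the commutative-algebra object $W(A_n)$.

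Two hypotheses of Corollary \ref{owncentralizer} therefore remain to verify: (a) $A_n$ is self-centralizing in $G_n$, and (b) $W(A_n)=\{0\}$. For (a), I would compute in the semidirect product that $(b,r)(a,1)(b,r)^{-1}=(r\cdot a,1)$, identifying the centralizer of $A_n$ with $A_n\rtimes C$, where $C\subset Q$ is the kernel of the $Q$-action on $A_n$. It then suffices to show $C=\{1\}$: any $q\in C$ satisfies $q-1\in(2-x_2)^n\mathbf{Z}[Q]$, so $q$ maps to $1$ in the further quotient $A_1=\mathbf{Z}[Q]/(2-x_2)$. Specialising $x_2\mapsto 2$ identifies $A_1$ with the Laurent polynomial domain $\mathbf{Z}[1/2][x_1^{\pm 1},x_3^{\pm 1},\dots,x_d^{\pm 1}]$, in which a monomial $2^{a_2}\prod_{i\neq 2}x_i^{a_i}$ equals $1$ only when all exponents vanish; hence $q=1$.

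For (b), I would exploit the $(2-x_2)$-adic filtration $A_n\supset(2-x_2)A_n\supset\cdots\supset (2-x_2)^n A_n=\{0\}$ (already used in Lemma \ref{lprian}), each successive quotient being isomorphic to the domain $A_1$ of Krull dimension $d\ge 2$. A nonzero $\mathbf{Z}[Q]$-submodule of $A_1$, being a nonzero ideal in a domain, retains Krull dimension $d\ge 2$, so has $\ell'\ge\omega$; hence the only submodule of $A_1$ with $\ell'<\omega$ is the zero submodule, and a short induction up the filtration forces $W(A_n)=\{0\}$. The main obstacle of the whole plan is really this bookkeeping in the two hypothesis checks: the heavy lifting --- both the explicit computation of $\ell'(A_n)$ and the general length-to-length reduction across the split extension --- has already been carried out in the preceding lemmas, so once (a) and (b) are in place the proposition falls out in one line.
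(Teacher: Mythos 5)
Your proposal is correct and follows the same route as the paper, which deduces the proposition directly from Lemma \ref{gndg}, Lemma \ref{lprian}, and Corollary \ref{owncentralizer}. Your verifications that $A_n$ is self-centralizing in $G_n$ (by showing the $Q$-action is faithful, detected already modulo $2-x_2$) and that $W(A_n)=\{0\}$ (via the $(2-x_2)$-adic filtration and the fact that $A_1$ is a domain of Krull dimension $d\ge 2$) are both sound and fill in exactly the hypothesis checks the paper leaves implicit.
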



\section{Wreath products}\label{secw}

Proposition \ref{wreathcond} is a particular case of the following more general result.
Let $H,G$ be any finitely generated groups, and $X$ a $G$-set with finitely many orbits. Then the permutational wreath product $H\wr_X G$, which is defined as the semidirect product $H^{(X)}\rtimes G$ (with the shifting action), is finitely generated.

\begin{prop}
Assume that the diagonal action of $G$ on $X^2$ has infinitely many orbits, and that $H\neq\{1\}$. Then $\cb^e(H\wr_X G)=\mathfrak{C}$.
\end{prop}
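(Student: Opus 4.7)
The plan is to exhibit a topological Cantor set $K\subset\mathcal{G}_d$ containing the marked group $W=H\wr_X G$; since Cantor sets are perfect, this places $W$ in the perfect kernel of $\mathcal{G}_d$ and yields $\cb^e(W)=\mathfrak{C}$.

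First I would introduce the ``free'' companion $\hat W=(\ast_{x\in X}H)\rtimes G$. Because $X/G$ is finite and $H,G$ are finitely generated, $\hat W$ can be generated by the same number $d$ of generators that generate $W$, obtained by lifting chosen generators of $W$; both $W$ and $\hat W$ then become quotients of the same $F_d$. For each $G$-invariant set $\Gamma$ of unordered pairs in $X^2\setminus\Delta$, define the graph product $P_\Gamma=(\ast_{x\in X}H)/\langle\!\langle[h_a,h_b]:\{a,b\}\in\Gamma\rangle\!\rangle$ and set $W_\Gamma=P_\Gamma\rtimes G$, a marked group in $\mathcal{G}_d$ via the same $d$-tuple of generators. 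Enumerate the countably many $G$-orbits of unordered pairs from $X^2\setminus\Delta$ as $\mathcal O_1,\mathcal O_2,\dots$ (this uses the infinite-orbits hypothesis); for $S\subset\mathbf N$ set $\Gamma(S)=\bigcup_{i\in S}\mathcal O_i$ and write $W_S:=W_{\Gamma(S)}$. This defines a map $\Phi:\{0,1\}^{\mathbf N}\to\mathcal G_d$ with $\Phi(\emptyset)=\hat W$ and $\Phi(\mathbf N)=W$.

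For injectivity of $\Phi$, I would invoke the standard non-commutation fact for graph products: the canonical map $H_a\ast H_b\to P_\Gamma$ is injective whenever $\{a,b\}\notin\Gamma$ with $a\ne b$, so $[h_a,h_b]\ne 1$ in $P_\Gamma$ for any $h\ne 1$ in $H$. Consequently, if $i\in S\setminus S'$ and $\{a,b\}\in\mathcal O_i$, then $[h_a,h_b]=1$ in $W_S$ but $[h_a,h_b]\ne 1$ in $W_{S'}$, so the two kernels in $F_d$ differ and $W_S\ne W_{S'}$ as marked groups. For continuity, for each fixed $w\in F_d$ I would observe that the triviality of $w$ in $W_S$ depends only on finitely many coordinates of $S$: after pushing the $G$-factor of $w$ to the right, the base component involves a finite support $X_w\subset X$, and triviality in $P_{\Gamma(S)}$ is determined by $\Gamma(S)\cap(X_w\times X_w)$, hence by $S\cap\Sigma_w$ for the finite set $\Sigma_w$ of $G$-orbits on unordered pairs meeting $X_w^2$. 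Continuity plus injectivity plus compactness of $\{0,1\}^{\mathbf N}$ make $\Phi$ a homeomorphism onto a closed perfect subset $K\subset\mathcal G_d$.

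It remains to observe that $W=\Phi(\mathbf N)$ is non-isolated in $K$ in a quantitative way: for each radius $R$ the ball of radius $R$ in $W$ involves only finitely many positions of $X$, hence only finitely many orbits $\Sigma_R\subset\mathbf N$, and for every $S\supset\Sigma_R$ all commutation relations $[h_a,h_b]=1$ relevant inside that ball are already imposed in $W_S$, so $W_S$ and $W$ agree on the $R$-ball. The uncountable set $\{S:S\supset\Sigma_R\}$ therefore lands in the $R$-neighborhood of $W$ inside $\mathcal G_d$, and letting $R\to\infty$ exhibits $W$ as a limit of distinct points of $K$, proving $\cb^e(W)=\mathfrak{C}$. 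The main substantive input is the graph-product non-commutation statement used for injectivity, which is a classical consequence of the normal-form theorem for graph products; everything else reduces to continuity of cylinder-type Boolean functions on $\{0,1\}^{\mathbf N}$ together with a routine finiteness argument on balls.
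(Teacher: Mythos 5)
Your proof is correct, and the core strategy is the same as the paper's: build a ``free'' companion of the wreath product, embed a Cantor set of its quotients into $\mathcal{N}(F_d)$ parametrized by which pairs of conjugates of $H$ are forced to commute, observe that the wreath product itself lies in that perfect set, and conclude $\cb^e(W)=\mathfrak{C}$. The differences are in the formalism and in how much you prove. The paper reduces to the transitive case $X=G/L$, works with the presentation $S=\langle H,G\mid [H,L]\rangle$, parametrizes quotients $\Gamma_I$ by subsets $I$ of an infinite set $J\subset G$ of carefully chosen double-coset representatives (so that $g,g^{-1}\notin LhL$ for $g\neq h$ in $J$), cites \cite[Lemma 2.3]{CorGD} for the key non-commutation statement, and checks continuity only at the single point $\emptyset$, which suffices for the condensation argument (continuity at one point plus injectivity already gives uncountably many nearby points). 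You instead phrase $\hat W$ and the $W_S$ directly as graph products over the vertex set $X$ with all vertex groups equal to $H$, parametrize by $G$-invariant graphs on $X$, replace the appeal to \cite{CorGD} by the classical retraction/normal-form theory for graph products (which gives the same non-commutation conclusion), and prove full continuity so that $\Phi$ is a homeomorphism onto a closed perfect subset. This buys you a self-contained argument modulo standard graph-product facts, and it dispenses with the transitivity reduction that the paper leaves as an exercise. Two minor remarks: the phrase ``obtained by lifting chosen generators of $W$'' is slightly backwards --- one should choose generators of $\hat W$ and push them forward, not lift from $W$ --- but this does not affect the argument since $\cb^e$ is independent of the chosen marking; and your final paragraph re-establishing that $W$ is non-isolated is redundant once you know $K$ is a perfect closed set containing $W$, though it does no harm.
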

\begin{proof}
We assume for the sake of simplicity that $X$ is $G$-transitive; the extension of the proof to the general case is left as an exercise.

So we can write $X=G/L$. Set $\Gamma=H\wr_X G$. Consider the finitely generated group $S$ presented as 
$$\langle H,G|[H,L]\rangle.$$
This group is finitely generated and possesses $\Gamma$ as a quotient in a natural way. We are going to topologically embed a Cantor set into the set of quotients of $S$, so that the image contains $\Gamma$, which will imply that $\cb^e(\Gamma)=\mathfrak{C}$.

Consider an infinite subset $J$ of $G-L$ such that for any distinct $g,h\in J$, $g,g^{-1}\notin LhL$.
If $I$ is any subset of $J$, define $\Gamma_I$ as the quotient of $S$ by all $[H,gHg^{-1}]$ for all $g\notin L$ such that $LgL\cap (I\cup I^{-1})=\emptyset$. Then from \cite[Lemma 2.3]{CorGD} we deduce that for any $g\in J$, we have $[H,gHg^{-1}]=\{1\}$ if and only if $g\in I$. Therefore the map $I\to\Gamma_I$ is injective, so it embeds a Cantor set into the set of quotients of $S$, identified with $\mathcal{N}(S)$, and maps in particular $\emptyset$ to $\Gamma$. We claim that this map is continuous at $\emptyset$. Indeed, let $I_n\to\emptyset$. Let $g$ be a relation in $\Gamma$. Then $g$ is a consequence of finitely many relators, so $g=1$ in $\Gamma_{J-F}$ for some finite subset $F$ of $J$. As $I_n\to\emptyset$, eventually $I_n\cap F=\emptyset$, so $\Gamma_{I_n}$ is a quotient of $\Gamma_{J-F}$, so $g=1$ in $\Gamma_{I_n}$. Thus $\Gamma$ is a condensation point.
\end{proof}


\end{document}